\ifdefined\submit
  \RequirePackage{fix-cm}
  \documentclass[final]{svjour3}
  \smartqed
\else
  \documentclass[letter,11pt]{article}
  \usepackage{amsthm}
\fi

\usepackage[T1]{fontenc}
\ifdefined\submit
\else
  \usepackage{lmodern}
\fi

\usepackage{amsmath}
\usepackage{amssymb}
\usepackage{mathtools}
\usepackage{mathrsfs}
\usepackage{stmaryrd}
\usepackage{bbm}
\usepackage{bm}

\usepackage{graphicx}
\usepackage{float}
\usepackage{caption}
\usepackage{subcaption}
\usepackage{wrapfig}
\usepackage{booktabs}
\usepackage{tabularx}
\usepackage{bigstrut}
\usepackage{array,multirow,xcolor,colortbl}
\usepackage{threeparttable}

\newcommand{\Thick}{1.4pt} 
\newcommand{\HThick}{\noalign{\hrule height \Thick}}

\usepackage{epsfig,epstopdf}
\usepackage[shortlabels]{enumitem}
\usepackage{seqsplit}
\usepackage{soul}
\usepackage{color}
\usepackage{makecell}
\usepackage{algorithm}
\usepackage{algpseudocode}

\usepackage{tikz}
\usetikzlibrary{positioning,quotes}
\usepackage[framemethod=tikz]{mdframed}
\usepackage[breakable,most]{tcolorbox}

\newlength{\Rlen}
\ifdefined\submit
  \usepackage[sort&compress,numbers]{natbib}
  
\else
  \usepackage[numbers]{natbib}
\fi

\ifdefined\submit
  \usepackage[breaklinks=true,pdfstartview=FitH,hidelinks]{hyperref}
\else
  \definecolor{darkblue}{rgb}{0,0,.5}
  \usepackage[breaklinks=true,pdfstartview=FitH,pagebackref=true]{hyperref}
  \hypersetup{
    colorlinks=true,
    linkcolor=darkblue,
    citecolor=darkblue,
    urlcolor=darkblue
  }
  \usepackage{footnotebackref}
\fi

\usepackage{tablefootnote}
\ifdefined\submit
\else
  \usepackage[margin=1in]{geometry} 

  \usepackage{etoolbox}
  \makeatletter
    \patchcmd{\@maketitle}{\LARGE \@title}{\LARGE\bfseries\@title}{}{}
    \renewcommand{\@seccntformat}[1]{\csname the#1\endcsname.\quad}
  \makeatother

  \AtBeginDocument{
    \setlength{\abovedisplayskip}{4pt}
    \setlength{\belowdisplayskip}{4pt}
    \setlength{\abovedisplayshortskip}{2pt}
    \setlength{\belowdisplayshortskip}{2pt}
    \setlength{\jot}{1pt}
  }
  
  \allowdisplaybreaks
\fi

\ifdefined\submit
  \newcommand{\smartqedmark}{\qed}
\else
  \newcommand{\smartqedmark}{\qedhere}
  
\fi

\usepackage{aliascnt}
\usepackage[capitalize,nameinlink]{cleveref}

\crefname{appendix}{Appendix}{Appendices}
\Crefname{appendix}{Appendix}{Appendices} 
\newcommand{\newaliasedtheorem}[2]{%
  \newaliascnt{#1}{theorem}%
  \newtheorem{#1}[#1]{#2}%
  \aliascntresetthe{#1}%
}

\ifdefined\submit
\else
\fi

\ifdefined\submit
\newaliasedtheorem{fact}{Fact}
\newtheorem{assumption}{Assumption}
\renewcommand{\theassumption}{\Alph{assumption}}
\else 
\newtheorem{theorem}{Theorem}[section]
\newaliasedtheorem{lemma}{Lemma}
\newaliasedtheorem{corollary}{Corollary}
\newaliasedtheorem{proposition}{Proposition}
\newaliasedtheorem{fact}{Fact}
\newtheorem{assumption}{Assumption}
\renewcommand{\theassumption}{\Alph{assumption}}
\theoremstyle{definition}

\newaliasedtheorem{definition}{Definition}
\newaliasedtheorem{example}{Example}
\newaliasedtheorem{remark}{Remark}
\newaliasedtheorem{problem}{Problem}
\fi 

\crefname{theorem}{Theorem}{Theorems} 
\crefname{lemma}{Lemma}{Lemmas} 
\crefname{corollary}{Corollary}{Corollaries} 
\crefname{proposition}{Proposition}{Propositions} 
\crefname{assumption}{Assumption}{Assumptions}

\newlist{assumptionlist}{enumerate}{1}
\setlist[assumptionlist]{
	label=\textup{(\roman*)},
	ref=\theassumption(\roman*)
}

\crefname{assumptionlisti}{Assumption}{Assumptions}

\crefname{definition}{Definition}{Definitions}
\crefname{example}{Example}{Examples}
\crefname{remark}{Remark}{Remarks}
\crefname{problem}{Problem}{Problems}

\crefformat{equation}{\textup{#2(#1)#3}}
\crefrangeformat{equation}{\textup{#3(#1)#4--#5(#2)#6}}
\crefmultiformat{equation}{\textup{#2(#1)#3}}{ and \textup{#2(#1)#3}}
  {, \textup{#2(#1)#3}}{, and \textup{#2(#1)#3}}
\crefrangemultiformat{equation}{\textup{#3(#1)#4--#5(#2)#6}}%
  { and \textup{#3(#1)#4--#5(#2)#6}}{, \textup{#3(#1)#4--#5(#2)#6}}{, and \textup{#3(#1)#4--#5(#2)#6}}

\Crefformat{equation}{#2Equation~\textup{(#1)}#3}
\Crefrangeformat{equation}{Equations~\textup{#3(#1)#4--#5(#2)#6}}
\Crefmultiformat{equation}{Equations~\textup{#2(#1)#3}}{ and \textup{#2(#1)#3}}
  {, \textup{#2(#1)#3}}{, and \textup{#2(#1)#3}}
\Crefrangemultiformat{equation}{Equations~\textup{#3(#1)#4--#5(#2)#6}}%
  { and \textup{#3(#1)#4--#5(#2)#6}}{, \textup{#3(#1)#4--#5(#2)#6}}{, and \textup{#3(#1)#4--#5(#2)#6}}

\crefdefaultlabelformat{#2\textup{#1}#3}

\numberwithin{equation}{section}

\newtcolorbox{todolist}{
  enhanced,
  breakable,
  title=\textit{To-do List},
  fonttitle=\bfseries,
  colback=gray!5,
  colframe=gray!60!black,
  boxrule=0.8pt,
  arc=3pt,
  left=6pt,right=6pt,top=6pt,bottom=6pt
}

\DeclareMathOperator*{\argmin}{argmin}

\DeclareMathOperator*{\Span}{span}
\DeclareMathOperator*{\Fix}{Fix}

\DeclareMathOperator*{\interior}{int}

\DeclareMathOperator*{\ran}{ran}
\DeclareMathOperator*{\dom}{dom}
\DeclareMathOperator*{\gra}{gra}
\DeclareMathOperator*{\Id}{Id}
\DeclareMathOperator*{\zer}{zer}
\DeclareMathOperator{\diag}{diag}

\renewcommand{\Re}{\mathbb{R}}
\newcommand{\bR}{\mathbf{R}}

\def \rla{\right\rangle}
\def \lla{\left\langle}
\newcommand{\inner}[2]{\lla #1, #2 \rla}
\newcommand{\norm}[1]{{\left\|{#1}\right\|}}

\renewcommand{\H}{\mathcal{H}}
\newcommand{\K}{\mathcal{K}}
\newcommand{\bcalH}{\bm{\mathcal{H}}}
\newcommand{\bcalK}{\bm{\mathcal{K}}}

\newcommand{\bA}{\mathbf{A}}
\newcommand{\bB}{\mathbf{B}}

\newcommand{\bD}{\mathbf{D}}

\newcommand{\bL}{\mathbf{L}}
\newcommand{\bM}{\mathbf{M}}
\newcommand{\bN}{\mathbf{N}}
\newcommand{\bP}{\mathbf{P}}
\newcommand{\bQ}{\mathbf{Q}}
\renewcommand{\bR}{\mathbf{R}}
\newcommand{\bU}{\mathbf{U}}
\newcommand{\bV}{\mathbf{V}}
\newcommand{\bW}{\mathbf{W}}
\newcommand{\bX}{\mathbf{X}} 
\newcommand{\bY}{\mathbf{Y}} 
\newcommand{\bZ}{\mathbf{Z}}

\newcommand{\bOmega}{\mathbf{\Omega}}

\newcommand{\bXi}{\mathbf{\Xi}}
\newcommand{\bTheta}{\mathbf{\Theta}}
\newcommand{\hatOmega}{\mathbf{\Omega_b}}

\newcommand{\ba}{\mathbf{a}}
\newcommand{\bb}{\mathbf{b}}

\newcommand{\bq}{\mathbf{q}}

\newcommand{\bs}{\mathbf{s}}
\newcommand{\bu}{\mathbf{u}}
\newcommand{\bv}{\mathbf{v}}
\newcommand{\bw}{\mathbf{w}}
\newcommand{\bx}{\mathbf{x}}
\newcommand{\by}{\mathbf{y}}
\newcommand{\bz}{\mathbf{z}}

\newcommand{\toweak}{\rightharpoonup}
\newcommand{\toset}{\rightrightarrows}

\def\TheTitle{A Splitting Framework for Composite Semimonotone Inclusions}

\ifpdf
\hypersetup{
  pdftitle={\TheTitle},
  pdfauthor={Jan Harold Alcantara, Minh N. Dao, and Akiko Takeda}
}
\fi

\ifdefined\submit
  \title{\TheTitle\thanks{Version of \today.}}
  \titlerunning{Splitting Methods for Semimonotone Inclusions}
  \author{Jan Harold Alcantara \and Minh N. Dao \and Akiko Takeda}
  \institute{*Corresponding author: Jan Harold Alcantara \at
    Center for Advanced Intelligence Project, RIKEN, Tokyo, Japan.\\
    \email{\url{janharold.alcantara@riken.jp}}\\ \text{}\\
    Minh N. Dao \at
    School of Science, RMIT University, Melbourne, VIC 3000, Australia.\\
    \email{\url{minh.dao@rmit.edu.au}}\\ \text{}\\
    Akiko Takeda \at
    Department of Mathematical Informatics, Graduate School of Information Science and Technology, University of Tokyo,
    Tokyo, Japan, and Center for Advanced Intelligence Project, RIKEN, Tokyo, Japan.\\
    \email{\url{takeda@mist.i.u-tokyo.ac.jp}}
  }
  \date{}
\else
  \title{\TheTitle}
  \date{August 17, 2026}
  \author{
    Jan Harold Alcantara\thanks{\url{janharold.alcantara@riken.jp}. Center for Advanced Intelligence Project, RIKEN, Tokyo, Japan.}
    \qquad Minh N. Dao\thanks{\url{minh.dao@rmit.edu.au}. School of Science, RMIT University, Melbourne, VIC 3000, Australia.}
    \qquad Akiko Takeda\thanks{\url{takeda@mist.i.u-tokyo.ac.jp}. Department of Mathematical Informatics, Graduate School of Information Science and Technology, University of Tokyo,
    Tokyo, Japan, and Center for Advanced Intelligence Project, RIKEN, Tokyo, Japan.}
  }
\fi

\begin{document}

\maketitle

\begin{abstract}
We introduce a general framework for composite inclusion problems with affine constraints, covering both monotone and semimonotone regimes. The central idea is a new interpretation of the constrained inclusion through an operator-vector pair that separates the implicit inclusion from the affine constraint: the former is handled through possibly preconditioned resolvent evaluations, while the latter is handled through an explicit forward step in an auxiliary variable. This yields a single abstract iteration applicable to multioperator inclusions, linearly coupled inclusions, and block-separable inclusions with affine constraints. The freedom in choosing the operator-vector pair enables the systematic construction of problem-adapted splitting algorithms, including new schemes for several important problem classes. Exploiting the orthogonal decomposition induced by the constraint subspace, we develop a unified and streamlined convergence analysis and establish weak and strong convergence guarantees under semimonotonicity assumptions. When specialized to multioperator inclusions, the framework permits general bounded linear operator coefficients, rather than only scalar coefficients, and therefore accommodates preconditioned resolvents. The resulting schemes recover several existing methods while extending them to previously uncovered regimes, and in several important cases, require weaker assumptions and admit provably larger admissible parameter ranges.
\end{abstract}

\noindent{\bfseries Keywords:}
multioperator inclusion; operator splitting; semimonotone operator; affine-constrained inclusion; Douglas--Rachford splitting; preconditioned resolvent.

\noindent{\bf Mathematics Subject Classification (MSC 2020):}
47H05,  
47J22,  
47J25,  
65K10.  

\section{Introduction}\label{sec:intro}

Let $\bcalH$, $\bcalK$ and $\bcalH'$ be real Hilbert spaces. We consider the inclusion problem
\begin{equation}
	\text{find } \bx \in \bcalH   \text{ such that } 0 \in \bA \bx + \bP \bB (\bR \bx) +N_{\hatOmega}(\bx) ,
	\label{eq:productspace}
\end{equation}
where $\bA : \bcalH \rightrightarrows \bcalH$ is a set-valued operator, $\bB: \bcalK \to \bcalK $  is a single-valued mapping, $\bP:\bcalK\to\bcalH$ and $\bR : \bcalH\to\bcalK$ are bounded linear operators, and $N_{\hatOmega}$ is the normal cone to the affine subspace
\begin{equation*}
	\hatOmega  = \{ \bx \in \bcalH: \bM^* \bx = \bf{b}\},
\end{equation*}
where $\bM^*$ is the adjoint of $\bM: \bcalH' \to \bcalH$, a bounded linear operator, and $\bf{b}\in \bcalH'$. Note that $\hatOmega$ can be expressed as 
	\begin{equation}
		\hatOmega = \bar{\bx} + \bOmega ,\qquad \bOmega \coloneqq \ker(\bM^*),
		\label{eq:hatOmega}
	\end{equation}
where $\bar{\bx}\in \hatOmega$ is arbitrary. We are particularly interested in the case where $\bA$ is accessed through its resolvent, while $\bB$ is treated by direct evaluation. In this paper, we study the setting in which $\bA$ is semimonotone, $\bB$ is cocoercive, and $\bP^*$ and $\bR$ agree on $\bOmega$; see \cref{assume:blanket} for the precise assumptions. Throughout this paper, we assume that \eqref{eq:productspace} has a solution.

\subsection{Problem classes motivating the framework}

The inclusion problem \eqref{eq:productspace} covers broad problem classes, each of which is itself quite general. The corresponding product space embeddings into \eqref{eq:productspace} are summarized in \cref{tab:productspace_embeddings}.

\subsubsection{Multioperator inclusion (without coupling)}

Our main motivation for the problem setting \eqref{eq:productspace} is the multioperator inclusion 
		\begin{equation}\label{eq:gen_prob_nocoupling}
		\text{find~} x\in \H \text{~such that~} 0 \in \sum_{i=1}^n A_i x + \sum_{j=1}^p B_j x, 
		\tag{P1}
	\end{equation}
where $A_i\colon \H \rightrightarrows \H$ is maximally semimonotone, $B_j\colon \H \to \H$ is cocoercive, and $\H$ is a real Hilbert space. 

Splitting methods for solving \eqref{eq:gen_prob_nocoupling} are well developed in the monotone setting. When $p=0$, the Douglas--Rachford method \cite{douglas_rachford_1956} plays a foundational role in the two-operator case, while for $n=3$, Ryu \cite{ryu_2020} introduced a three-operator resolvent splitting together with the notion of \emph{frugal resolvent splittings}. This line of work was extended to arbitrary $n\ge 2$ by Malitsky and Tam \cite{MT23}, and frugal \emph{parametrized} resolvent splittings, allowing resolvents of scaled $A_i$'s, were later studied in \cite{aragonartacho_bot_torregrosabelen_2023}. In the presence of cocoercive terms, representative examples include the forward-backward method for $(n,p)=(1,1)$ \cite{LM79}, Davis--Yin splitting for $(n,p)=(2,1)$ \cite{DY17}, generalized forward-backward splitting for arbitrary $n$ with $p=1$ \cite{RFP13}, and distributed forward-backward methods for general $n$ and $p$ \cite{AMTT23}. More general monotone frameworks were developed later: For $p=0$, Tam \cite{Tam24} introduced a unified framework for frugal and decentralized resolvent splittings for arbitrary $n$, while graph-based Douglas--Rachford constructions were studied in \cite{BCN24}; in the presence of forward terms, graph-based and distributed extensions were developed in \cite{ACL24,ACGN25,DTT26}. These works cover broad classes of monotone multioperator algorithms, but remain centered on splittings built from standard resolvents and linear combinations with scalar coefficients. 

Beyond the monotone setting, however, the available results are much more limited: under generalized monotonicity, comonotonicity, or, more generally, semimonotonicity, existing works are restricted to particular algorithms, including Douglas--Rachford for $(n,p)=(2,0)$ \cite{EPLP25,DP18,BDP22}, Davis--Yin splitting for $(n,p)=(2,1)$ \cite{DP21}, and multioperator Douglas--Rachford methods for arbitrary $n$ with $p=0$ \cite{AT25,ADT25}, based on reduced product space reformulations \cite{Cam22,Kruger1985}. To the best of our knowledge, no general framework is currently available for either the generalized monotone or the comonotone setting, both of which fall within the broader class of semimonotone operators.

\subsubsection{Multioperator inclusion with linear coupling}
An extended class of problems that includes \eqref{eq:gen_prob_nocoupling} is given by 
	\begin{equation}\label{eq:gen_prob}
		\text{find~} x\in \H \text{~such that~} 0 \in \sum_{i=1}^n A_i x + \sum_{\ell=1}^r D_\ell^* C_\ell(D_\ell x) + \sum_{j=1}^p B_j x, \tag{P2}
	\end{equation}
where $C_\ell\colon \K \rightrightarrows \K$ is maximally semimonotone,
	$D_\ell\colon \H \to \K$ is bounded and linear, and $\H,\K$ are real Hilbert spaces. This class arises naturally, for instance, from optimization problems of the form
	\[
	\min_{x\in\H}\;
	\sum_{i=1}^n f_i(x) + \sum_{\ell=1}^r g_\ell(D_\ell x) + \sum_{j=1}^p h_j(x),
	\]
	where the terms $f_i$ and $g_\ell$ are handled through resolvents, while the terms $h_j$ are treated explicitly through their cocoercive gradients.
	
	In the monotone setting, many primal-dual algorithms have been proposed, primarily for particular parameter regimes. For example, when $(n,r,p)=(1,1,0)$, one of the most prominent methods is the Chambolle--Pock algorithm \cite{Chambolle2011FirstOrderPrimalDual}. Other primal-dual schemes treating cases with $n\in\{0,1\}$ and $(r,p)=(1,1)$ were studied in \cite{ChenHuangZhang2013,DroriSabachTeboulle2015,CombettesPesquet2012,Condat2013,LatafatPatrinos2017}. When $n\geq 2$ and $r\geq 2$, \cite{aragonartacho_bot_torregrosabelen_2023} proposed an algorithm for $p=0$, with multiple linear compositions handled through a product space reformulation, while \cite{DPTT25} further developed a general framework that also accommodates $p\geq 1$, with the single-valued operators being either cocoercive or monotone and Lipschitz continuous. Beyond the monotone setting, the available results appear to be even more limited, as only the Chambolle--Pock algorithm has so far been studied under semimonotonicity \cite{ELP25}, to the best of our knowledge.
	
\subsubsection{Block-separable inclusion with affine constraints}
	Consider the variational inequality with semimonotone operators 
	\begin{align}
		& \text{find~} (x_1, \dots, x_n)\in \H_1\times \dots \times \H_n \text{~such that~} \notag \\
		& \qquad 0\in A_1x_1\times \dots \times A_nx_n +N_{\hatOmega}(x_1, \dots, x_n),    
		\label{eq:VI} \tag{P3}
	\end{align}
	where $A_i\colon \H_i\rightrightarrows \H_i$ is maximally semimonotone and $\hatOmega \coloneqq \{(x_1, \dots, x_n)\in \H_1\times \dots \times \H_n: \sum_{i=1}^nD_ix_i =b\}$ with $D_i\colon \H_i\to \H$ a bounded linear operator and $b\in \H$. This class arises, for example, from linearly constrained multi-block problems of the form
	\begin{equation}
		\min_{x_i \in \H_i} \sum_{i=1}^n f_i(x_i) \text{~subject to~} \sum_{i=1}^n D_ix_i = b,  
		\label{eq:multi-block_opt}  
	\end{equation}
	where $f_i\colon \H_i\to (-\infty, +\infty]$ is a proper lower semicontinuous function with semimonotone subdifferentials. 

For the particular linearly constrained multi-block optimization model \eqref{eq:multi-block_opt}, a standard route is to pass to the KKT system using inverses of subdifferential operators \cite{ADT25,MT23}. This recasts the problem into a multioperator inclusion \eqref{eq:gen_prob_nocoupling} with $p=0$. In the monotone case, convergence is known for the Malitsky--Tam algorithm \cite{MT23}. For the generalized monotone case, the multioperator Douglas--Rachford algorithm is studied in \cite{ADT25}.

\subsection{Contributions}

We develop a unified framework for splitting methods that provides a
common structural viewpoint for several classes of composite inclusion
problems in both the monotone and semimonotone settings. In addition to this unifying
role, the framework provides a systematic mechanism for deriving new
splitting algorithms. The multioperator inclusion
\eqref{eq:gen_prob_nocoupling} remains our main motivating problem. 
The main contributions are as follows.

\begin{enumerate}
	
	\item We introduce a new interpretation of splitting for affine-constrained
	inclusions through a preconditioner-like operator-vector pair; see
	\cref{assume:preconditioner}. The implicit inclusion is handled through
	possibly preconditioned resolvent evaluations, while the affine
	constraint is handled through an explicit forward step in an auxiliary
	variable. This leads to the abstract iteration
	\cref{algo:abstract_preconditioned_forward}. The freedom in choosing the operator-vector pair also leads to
		\textit{new splitting schemes}, as illustrated for
		\eqref{eq:gen_prob} and \eqref{eq:VI} in \cref{sec:realizations}; see \cref{ex:realization_P2,ex:realization_P3,ex:realization_P3_multiblock}.
	
	\item We establish convergence guarantees for the abstract scheme beyond the monotone setting. Our analysis covers two regimes: one in which a nonzero cocoercive forward operator $\bB$ is allowed, and another in which the forward term is absent but a broader semimonotone structure on $\bA$ can be incorporated. In both regimes, the sequence generated by the proposed algorithm converges weakly. Moreover, the corresponding shadow sequence converges weakly to a solution of the problem, while strong convergence is obtained under a restricted strong monotonicity assumption on the associated modulus of monotonicity.

	\item A central technical contribution is the proof strategy. The analysis is
	organized around the decomposition $\bOmega\oplus\bOmega^\perp$, reducing the
	main arguments to understanding how the relevant operators act on the subspaces. This yields a modular proof of the abstract
	theorem. The underlying viewpoint is
new even in the monotone setting, and the proof itself reveals how both monotone
and semimonotone classes can be treated within a single analysis.

	\item For our main motivational problem \eqref{eq:gen_prob_nocoupling}, we
	exploit the additional consensus structure of $\bOmega$ (see \cref{tab:productspace_embeddings}) to obtain general algorithms, including \cref{algo:full_implicit,algo:full}. In contrast to
	existing approaches, the resulting framework allows general, possibly
	non-scalar bounded linear operators in the splitting scheme%
	\footnote{In particular, $M_{ij}$, $N_{ij}$, $P_{ij}$, $R_{ij}$, and $D_i$
		in \cref{algo:full_implicit,algo:full} can be non-scalar operators.}
	and therefore accommodates preconditioned resolvents. In this specialization,
	we also obtain sharper guarantees in several important cases, including
	weaker assumptions and larger admissible parameter ranges than those
	available in closely related works; detailed comparisons are given in
	\cref{sec:comparisons}. To the best of our knowledge,
this is the first general splitting framework for multioperator inclusions
in the semimonotone setting; existing general frameworks are developed in
the monotone setting, whereas available semimonotone results concern
particular splitting methods. 
\end{enumerate}

\subsection{Organization of the paper}

The remainder of the paper is organized as follows. In \cref{sec:preliminaries}, we collect the basic material on bounded linear operators and semimonotone operators that will be used throughout the paper. In \cref{sec:gen_subspace}, we consider the abstract framework \eqref{eq:productspace}, formulate the associated splitting schemes, and state the main convergence theorem. In \cref{sec:multi_inclusion}, we show how the original multioperator inclusion problem \eqref{eq:gen_prob_nocoupling} fits into this framework and derive the corresponding convergence results as consequences of the abstract theory. In \cref{sec:comparisons}, we compare the present framework with existing works and highlight, in particular, the additional generality and sharper guarantees obtained in several important special cases. The proof of the main abstract convergence theorem is given in \cref{sec:proof_mainresult}. Finally, the appendices collect auxiliary results on linear operators and semimonotone operators, as well as several omitted proofs used in the main text.

\section{Preliminaries}\label{sec:preliminaries}

In this section, we briefly recall standard notation, definitions, and basic results concerning bounded linear and semimonotone operators on real Hilbert spaces. To distinguish these generic objects from the product space quantities used in the main development, we use non-bold symbols for the Hilbert spaces, operators, vectors, and sets appearing in this section.

Let $\H$ be a real Hilbert space with inner product $\langle \cdot,\cdot\rangle$ and induced norm $\|\cdot\|$. The set of nonnegative integers is denoted by $\mathbb{N}$ and the set of real numbers by $\mathbb{R}$. We denote $t_+ \coloneqq \max\{0,t\}$ for $t\in \Re$. We use the convention $\frac{a}{0}=+\infty$ for every $a>0$. We write $A\colon \H\rightrightarrows \H$ to indicate that $A$ is set-valued and $A\colon \H\to \H$ to indicate that $A$ is single-valued. We denote by $\Id_{\H}$ the identity operator on $\H$, and we omit the subscript if the underlying space is clear. For a sequence $\{x^k\}\subseteq \H$, we write $x^k\to x$ to denote strong convergence and $x^k\toweak x$ to denote weak convergence.

\subsection{Bounded linear operators}
\label{sec:boundedlinear_definitions}

We collect basic definitions and results on linear operators. Unless otherwise stated, we refer to \cite[Chapters 2 and 3]{BC17}. Let $A:\H\to \K$ be a linear operator, where $\H$ and $\K$ are real Hilbert spaces. We say that $A$ is \textit{bounded} and write $A\in \mathcal{B}(\H,\K)$ if $
\norm{A} \coloneqq \sup_{\norm{x}=1}\norm{Ax}<+\infty.
$
When $\H=\K$, we simply write $A\in \mathcal{B}(\H)$. Bounded linear operators are weakly continuous: if $x^k\toweak x$, then $Ax^k\toweak Ax$. 

An operator $A\in \mathcal{B}(\H)$ is \textit{self-adjoint} if $A^*=A$. The self-adjoint part of $A\in\mathcal{B}(\H)$ is denoted by $A_s\coloneqq \frac{1}{2}(A+A^*)$.
 The maximum and minimum spectral values of a self-adjoint operator $A$ are given by
\[
\lambda_{\max}(A)\coloneqq \sup_{\norm{x}=1}\inner{Ax}{x},
\qquad
\lambda_{\min}(A)\coloneqq \inf_{\norm{x}=1}\inner{Ax}{x},
\]
which are finite when $A\in \mathcal{B}(\H)$. 

$A\in \mathcal{B}(\H)$ is \textit{monotone on $D\subseteq \H$} if $\inner{Ax}{x}\geq 0$ for all $x\in D$. When $D=\H$, we write $A\succeq 0$ and say that $A$ is monotone. For $A,B\in \mathcal{B}(\H)$, we write $A\succeq B$ if $A-B\succeq 0$. We say that $A$ is \textit{$\alpha$-strongly monotone on $D$} if $\alpha>0$ and $\inner{Ax}{x}\geq \alpha \norm{x}^2$ for all $x\in D$. When $D=\H$, we simply say that $A$ is $\alpha$-strongly monotone. If $A\in \mathcal{B}(\H)$ is $\alpha$-strongly monotone and self-adjoint, $\norm{x}_{A} \coloneqq \sqrt{\inner{x}{Ax}}$ is an equivalent norm on $\H$; in particular,  $\alpha \norm{x}^2\leq \norm{x}_A ^2\leq \norm{A} \, \norm{x}^2$. Moreover, $A^{-1}$ is $\norm{A}^{-1}$-strongly monotone, and $\norm{x}_{A^{-1}}$ is a norm satisfying $\norm{A}^{-1} \, \norm{x}^2 \leq \norm{x}_{A^{-1}}^2 \leq \alpha^{-1} \, \norm{x}^2$.

We recall Young's inequality. 

\begin{fact}[Young's inequality]
	\label{lemma:Young}
	Let $W\in \mathcal{B}(\H)$ be a self-adjoint strongly monotone operator. Then 
	\[ |\inner{x}{y}|\leq \frac{1}{4}\norm{x}^2_{W} + \norm{y}_{W^{-1}}^2\]
	for all $x,y\in \H$.
\end{fact}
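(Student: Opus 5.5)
The plan is to reduce the inequality to the nonnegativity of a square in the $W$-geometry. Since $W\in\mathcal{B}(\H)$ is self-adjoint and strongly monotone, the facts recalled above give that $W^{-1}$ exists, is bounded, self-adjoint and strongly monotone. In particular, $\norm{\cdot}_W$ and $\norm{\cdot}_{W^{-1}}$ are genuine norms. It therefore suffices to show
\[
\pm\inner{x}{y}\leq \tfrac14\norm{x}_W^2+\norm{y}_{W^{-1}}^2 \quad\text{for all } x,y\in\H .
\]
Replacing $y$ by $-y$ leaves the right-hand side unchanged, so only the inequality with the minus sign needs proof.

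The key step is to expand a suitably chosen square. Set $z\coloneqq \tfrac12 x - W^{-1}y$ and use $\norm{z}_W^2=\inner{z}{Wz}\geq 0$. Expanding bilinearly gives
\[
\inner{\tfrac12 x - W^{-1}y}{\tfrac12 Wx - y}
= \tfrac14\inner{x}{Wx} - \tfrac12\inner{x}{y} - \tfrac12\inner{W^{-1}y}{Wx} + \inner{W^{-1}y}{y}.
\]
Self-adjointness of $W$ turns the third term into $\inner{y}{W^{-1}Wx}=\inner{y}{x}$. Hence
\[
0\leq \tfrac14\norm{x}_W^2 - \inner{x}{y} + \norm{y}_{W^{-1}}^2 ,
\]
which is exactly $\inner{x}{y}\leq \tfrac14\norm{x}_W^2+\norm{y}_{W^{-1}}^2$. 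Applying the same argument with $-y$ in place of $y$ gives the bound for $-\inner{x}{y}$. Combining the two yields the bound on $|\inner{x}{y}|$.

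No step here is a real obstacle. The only points needing care are the bookkeeping in the cross term, which requires self-adjointness of $W$ (and hence of $W^{-1}$), and confirming that $W^{-1}$ is bounded. The latter follows from Lax--Milgram, or from the spectral facts recalled in \cref{sec:boundedlinear_definitions}.
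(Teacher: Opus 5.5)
Your proof is correct, and it takes a slightly different route from the paper's. The paper writes the pairing as $\inner{W^{1/2}x}{W^{-1/2}y}$ and applies Cauchy--Schwarz. It then uses the scalar bound $ab\le \frac{\varepsilon}{2}a^2+\frac{1}{2\varepsilon}b^2$ with $\varepsilon=1/2$. You instead complete the square directly in the $W$-geometry. You need only $\inner{z}{Wz}\ge 0$ for $z=\tfrac12 x-W^{-1}y$, plus self-adjointness and invertibility of $W$.

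The two arguments are closely related: your quantity $\inner{z}{Wz}$ equals $\norm{\tfrac12 W^{1/2}x-W^{-1/2}y}^2$. So you expand exactly the square that the paper's Cauchy--Schwarz/AM--GM chain bounds implicitly.

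Your version never uses the operator square roots $W^{\pm1/2}$, so it needs no square-root lemma or functional calculus. It also handles the absolute value cleanly through the symmetry $y\mapsto -y$. The paper's free parameter $\varepsilon$ displays the whole family of weighted Young inequalities at once, although only $\varepsilon=1/2$ is needed. Your argument recovers that family too: taking $z=tx-W^{-1}y$ with $t>0$ gives $\inner{x}{y}\le \frac{t}{2}\norm{x}_W^2+\frac{1}{2t}\norm{y}_{W^{-1}}^2$.
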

\begin{proof}
	Since $W$ is $\alpha$-strongly monotone, $W^{-1/2}$ is well-defined. Then $$|\inner{x}{y}| = |\inner{W^{1/2}x}{W^{-1/2}y}|\leq \norm{W^{1/2}x} \cdot \norm{W^{-1/2}y} \leq \frac{\varepsilon}{2}\norm{W^{1/2}x}^2 + \frac{1}{2\varepsilon}\norm{W^{-1/2}y}^2$$
	for any $\varepsilon>0$. Taking $\varepsilon = 1/2$ gives the desired result. 
\smartqedmark \end{proof}

For $A\in \mathcal{B}(\H,\K)$, the kernel and range are denoted by $\ker(A)$ and $\ran(A)$. We have the following relationships:
\begin{equation}
    \ker(A)^\perp = \overline{\ran(A^*)}
\qquad\text{and}\qquad
\ran(A)^\perp = \ker(A^*),
\label{eq:kernelrange_relationship}
\end{equation}
where $ \overline{\ran(A^*)}$ is the closure of $\ran(A^*)$.
Given a closed linear subspace $\Omega\subseteq \H$, we denote by $\Pi_{\Omega}\in \mathcal{B}(\H)$ the projector onto $\Omega$, i.e., $\Pi_{\Omega}x = \argmin_{y\in \Omega}\norm{x-y}$, which is a self-adjoint operator. If $A\in \mathcal{B}(\H,\K)$ has closed range, $\Pi_{\ran (A)} = AA^{\dagger}$, where $A^\dagger$ denotes the Moore--Penrose inverse of $A$. 

Let $\Omega\subseteq \H$ be a closed linear subspace. $\H$ admits the orthogonal decomposition $\H = \Omega \oplus \Omega^\perp$. Given $A\in \mathcal{B}(\H)$, we write its block decomposition (see \cite[Chapter II]{Conway}) with respect to $\H = \Omega\oplus \Omega^\perp$ as
\begin{equation}
A = \begin{pmatrix}
A_{11} & A_{12} \\
A_{21} & A_{22}
\end{pmatrix}
\coloneqq 
\begin{pmatrix}
\Pi_{\Omega}A\Pi_{\Omega} & \Pi_{\Omega}A\Pi_{\Omega^\perp} \\
\Pi_{\Omega^\perp}A\Pi_{\Omega} & \Pi_{\Omega^\perp}A\Pi_{\Omega^\perp}
\end{pmatrix}.
\label{eq:2x2decomposition}
\end{equation}
We identify the diagonal blocks $A_{11}$ and $A_{22}$ with their
restrictions to $\Omega$ and $\Omega^\perp$, respectively. Accordingly,
properties such as strong monotonicity and invertibility of $A_{11}$
(resp.\ $A_{22}$) are understood on $\Omega$ (resp.\ $\Omega^\perp$). When appearing in operator expressions on $\mathcal H$, inverses and
Moore--Penrose inverses of these blocks are understood to be extended
by zero on the complementary subspace.

If $A$ is self-adjoint, then $A_{12}=A_{21}^*$. The generalized Schur complements of $A_{11}$ and $A_{22}$ are defined by
\begin{align*}
A/A_{11} &  \coloneqq A_{22} - A_{21} A_{11}^\dagger A_{12},  \quad \text{and} \quad  
A/A_{22} \coloneqq A_{11} - A_{12} A_{22}^\dagger A_{21}.
\end{align*}
\begin{fact}\label{lemma:schurcomplementlemma}
Let $A\in \mathcal{B}(\H)$ be self-adjoint with block decomposition as in \eqref{eq:2x2decomposition}. Then the following hold:
\begin{enumerate}
	\item If $A_{11}$ is strongly monotone, then $A\succeq 0$ if and only if $A/A_{11}= A_{22} - A_{21} A_{11}^{-1}A_{12}\succeq 0$.
	\item If $A_{22}$ has closed range, then $A\succeq 0$ if and only if $A_{22}\succeq 0$, $A/A_{22}=A_{11}-A_{12}A_{22}^\dagger A_{21}\succeq 0$, and $\ran(A_{21})\subseteq \ran(A_{22})$. 
\end{enumerate}
\end{fact}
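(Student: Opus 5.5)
For both parts, write $x = u + v$ with $u = \Pi_{\Omega}x \in \Omega$ and $v = \Pi_{\Omega^\perp}x\in\Omega^\perp$. Since $A$ is self-adjoint, $A_{12}=A_{21}^*$, and we have the quadratic-form identity
\[
\inner{Ax}{x} = \inner{A_{11}u}{u} + 2\inner{A_{21}u}{v} + \inner{A_{22}v}{v}.
\]
The plan is to complete the square in the appropriate block in each case.

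\emph{Part 1.} Assume $A_{11}$ is strongly monotone on $\Omega$. Then it is invertible there by Lax--Milgram, with a bounded self-adjoint inverse, so $A_{11}^{\dagger}=A_{11}^{-1}$ (extended by zero on $\Omega^\perp$). Completing the square in $u$ gives
\[
\inner{Ax}{x} = \inner{A_{11}(u + A_{11}^{-1}A_{12}v)}{u + A_{11}^{-1}A_{12}v} + \inner{(A_{22}-A_{21}A_{11}^{-1}A_{12})v}{v}.
\]
For the implication ``$\Leftarrow$'', both terms are nonnegative when $A/A_{11}\succeq 0$. For ``$\Rightarrow$'', fix $v\in\Omega^\perp$ and choose $u=-A_{11}^{-1}A_{12}v\in\Omega$. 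The first term then vanishes, so $\inner{(A/A_{11})v}{v}\ge 0$. On $\Omega$ the Schur complement acts as zero, so this shows $A/A_{11}\succeq 0$.

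\emph{Part 2.} Now assume $A_{22}$ has closed range in $\Omega^\perp$. Then $A_{22}^\dagger$ is bounded and self-adjoint, and $A_{22}A_{22}^\dagger = \Pi_{\ran A_{22}}$.

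For ``$\Leftarrow$'', suppose $\ran(A_{21})\subseteq\ran(A_{22})$. Then $A_{22}A_{22}^\dagger A_{21}=A_{21}$, and taking adjoints gives $A_{12}A_{22}^\dagger A_{22}=A_{12}$. Completing the square in $v$ gives
\[
\inner{Ax}{x} = \inner{A_{22}(v + A_{22}^\dagger A_{21}u)}{v+A_{22}^\dagger A_{21}u} + \inner{(A/A_{22})u}{u}.
\]
To verify this, expand the right-hand side. The cross term is $2\inner{A_{22}A_{22}^\dagger A_{21}u}{v} = 2\inner{A_{21}u}{v}$. The quadratic term in $u$ is $\inner{A_{12}A_{22}^\dagger A_{22}A_{22}^\dagger A_{21}u}{u} = \inner{A_{12}A_{22}^\dagger A_{21}u}{u}$, using $A_{22}^\dagger A_{22}A_{22}^\dagger=A_{22}^\dagger$. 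With $A_{22}\succeq 0$ and $A/A_{22}\succeq 0$, both terms are nonnegative.

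For ``$\Rightarrow$'', $A_{22}\succeq 0$ follows by taking $u=0$. For the range inclusion, take $w\in\ker A_{22}$ (within $\Omega^\perp$), any $u\in\Omega$, and set $v=tw$. This gives $\inner{A_{11}u}{u}+2t\inner{A_{21}u}{w}\ge 0$ for all $t\in\Re$, which forces $\inner{A_{21}u}{w}=0$. Hence $\ran A_{21}\perp \ker A_{22}$. Because $A_{22}$ is self-adjoint with closed range, $(\ker A_{22})^\perp\cap\Omega^\perp=\ran A_{22}$ by \eqref{eq:kernelrange_relationship}, so $\ran(A_{21})\subseteq\ran(A_{22})$. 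Finally, choosing $v=-A_{22}^\dagger A_{21}u$ in the completed-square identity, which is now valid, gives $A/A_{22}\succeq 0$.

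The main obstacle is this last direction of Part 2. The completed-square identity is only available once the range inclusion is known, so the kernel argument must come first. Closedness of the range is exactly what is needed for two things: boundedness of $A_{22}^\dagger$, and the identification $(\ker A_{22})^\perp=\ran A_{22}$ without taking a closure.
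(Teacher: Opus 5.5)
Your proof is correct, but it takes a different route from the paper, which gives no direct argument. Part (i) is quoted from \cite[Theorem 5.1]{MoslehianKianXu2019Positivity}. Part (ii) is deduced from the positivity criterion of \cite[Lemma 2.3]{CONTINO2019214}, which is phrased through a factorization $A_{21}=A_{22}^{1/2}D$. There, \cite[Theorem 2.1]{Tarcsay2011} supplies $\ran(A_{22})=\ran(A_{22}^{1/2})$ under closed range, and the choice $D=(A_{22}^{1/2})^\dagger A_{21}$ turns $D^*D$ into $A_{12}A_{22}^\dagger A_{21}$.

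You instead work directly with the quadratic form $\inner{A_{11}u}{u}+2\inner{A_{21}u}{v}+\inner{A_{22}v}{v}$. Completing the square in $u$ (resp.\ $v$) gives both directions, and in part (ii) the perturbation $v=tw$ with $w\in\ker A_{22}$ first supplies the range inclusion that the completed square needs.

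Your version is self-contained and elementary, needing only Lax--Milgram and standard Moore--Penrose identities. It also isolates exactly where closedness of $\ran(A_{22})$ is used. Without closedness, your kernel argument would only give $\ran(A_{21})\subseteq\overline{\ran(A_{22})}$, which is weaker than the condition $\ran(A_{21})\subseteq\ran(A_{22}^{1/2})$ needed in general.

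The paper's citation-based route is shorter and places the statement inside that more general, not necessarily closed-range, Schur-complement theory. The price is dependence on three external results and on a square-root factorization that your argument avoids.
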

\begin{proof}
	 Part (i) is from \cite[Theorem 5.1]{MoslehianKianXu2019Positivity}. For part (ii), we note from \cite[Theorem 2.1]{Tarcsay2011} that $\ran(A_{22})=\ran(A_{22}^{1/2})$. By \cite[Lemma 2.3]{CONTINO2019214}, $A\succeq 0$ if and only if $A_{22}\succeq 0$, $A_{11}-D^*D\succeq 0$, and $\ran(A_{21})\subseteq \ran(A_{22})$, where $D$ satisfies $A_{21}=A_{22}^{1/2}D$. Taking $D= ((A_{22})^{1/2})^\dagger A_{21}$ gives the desired result. 
\smartqedmark \end{proof}

\subsection{Semimonotone operators}

Let $A:\H \toset \H$ be an operator. The \emph{domain} of $A$ is $\dom A :=\{x\in \H: Ax\neq \varnothing\}$, the \emph{graph} of $A$ is $\gra A :=\{(x,u)\in \H\times \H: u\in Ax\}$, the set of \emph{fixed points} of $A$ is $\Fix A :=\{x\in \H: x\in Ax\}$, and the set of \emph{zeros} of $A$ is $\zer(A) \coloneqq \{x\in \H: 0\in Ax \}$. The \emph{resolvent} of $A$ is defined by $J_A :=(\Id +A)^{-1}.$
If $D$ is a strongly monotone operator, $J_{D^{-1}A} = (\Id+ D^{-1}A)^{-1}$ is called a \emph{preconditioned resolvent}. The \textit{normal cone} to a closed convex set $\Omega$ is given by 
\[ N_{\Omega}(x) = \begin{cases}
    \{ z\in \H : \inner{z}{y-x}\leq 0~\forall y\in\Omega\} & \text{if}~x\in\Omega, \\
    \varnothing & \text{otherwise}.
\end{cases} \]
When $\Omega$ is a closed linear subspace, $N_{\Omega}(x) = \Omega^\perp$ for any $x\in \Omega$.
    
We consider the class of semimonotone operators defined in \cite{ELP25,EPLP25}, which is a generalization of $\alpha$-monotone and $\alpha$-comonotone operators considered in \cite{Bauschke2021,BDP22,DP18}.

\begin{definition}[Cocoercivity and semimonotonicity]
Let $\H$ be a real Hilbert space.
\begin{enumerate}
\item \textbf{$(U,V)$-semimonotonicity.}
Let $A:\H\rightrightarrows \H$ be set-valued and let $U,V\in\mathcal B(\H)$ be self-adjoint.
We say that $A$ is \emph{$(U,V)$-semimonotone} if
\[
\langle x-x',\, y-y'\rangle
\;\ge\;
\langle x-x',\, U(x-x')\rangle
+
\langle y-y',\, V(y-y')\rangle
\qquad
\forall (x,y),(x',y')\in\gra(A).
\]
Special cases: $A$ is called \emph{$U$-monotone} if it is $(U,0)$-semimonotone,
\emph{$V$-comonotone} if it is $(0,V)$-semimonotone, and \emph{monotone} if it is $(0,0)$-semimonotone.

\item \textbf{Maximal $(U,V)$-semimonotonicity.}
We say that $A$ is \emph{maximally $(U,V)$-semimonotone} if it is $(U,V)$-semimonotone and there is no
$(U,V)$-semimonotone operator $\widetilde A:\H\rightrightarrows\H$ such that $
\gra(A)\subsetneq \gra(\widetilde A).$
(The analogous terminology applies to the special cases above.)
\item \textbf{$W^{-1}$-cocoercivity.}
Let $B:\H\to \H$ be single-valued and let $W\in\mathcal B(\H)$ be self-adjoint and strongly monotone.
We say that $B$ is \emph{$W^{-1}$-cocoercive} if
\[
\langle x-x',\, Bx-Bx'\rangle \;\ge\; \norm{Bx-Bx'}_{W^{-1}}^{2}
\qquad \forall x,x'\in\H.
\]
That is, $B$ is $(0,W^{-1})$-semimonotone. 
If $W=\frac1\beta \Id$ for some $\beta>0$, then $B$ is called \emph{$\beta$-cocoercive}.
\end{enumerate}
\end{definition}

We recall important properties related to maximally monotone operators. 
\begin{fact}
\label{lemma:maximalmonotone_properties}
    Let $A:\H\toset \H$ be a monotone operator, and let $\gamma >0$.
    \begin{enumerate}
        \item $\dom (J_{\gamma A}) = \ran( \Id+\gamma A) =\H$ if and only if $A$ is maximally monotone. Moreover, $J_{\gamma A}$ is firmly nonexpansive (i.e., $1$-cocoercive).
        \item If $A:\H\to\H$ is continuous, then it is maximally monotone.  
        \item If $A$ is maximally monotone, then $A^{-1}$ is maximally monotone.
        \item If $A$ and $B$ are maximally monotone operators such that $\interior (\dom (A)) \cap \dom (B) \neq \varnothing$, then $A+B$ is maximally monotone.
        \item If $S$ is a self-adjoint strongly monotone operator and $A$ is maximally monotone, then $S^{-1}A$ is maximally monotone when $\H$ is endowed with the inner product $\inner{\cdot}{\cdot}_{S}$.
    \end{enumerate}
\end{fact}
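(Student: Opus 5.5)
These are standard facts, so the plan is to prove each item from the definition of monotonicity or by citing classical results, in particular \cite[Chapters 20, 23 and 25]{BC17}.

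For (i), we use Minty's theorem: a monotone $A$ is maximal if and only if $\ran(\Id+A)=\H$. Replacing $A$ by the monotone operator $\gamma A$, whose maximality is equivalent to that of $A$, gives the first claim. Monotonicity of $\gamma A$ also shows that $J_{\gamma A}$ is single-valued and firmly nonexpansive. To see this, take $p=J_{\gamma A}x$ and $p'=J_{\gamma A}x'$. Then $x-p\in\gamma Ap$ and $x'-p'\in\gamma Ap'$, so $\inner{p-p'}{(x-p)-(x'-p')}\ge 0$, which rearranges to
\[
\inner{p-p'}{x-x'}\ge\norm{p-p'}^2 .
\]
Minty's theorem is the only nontrivial ingredient, and we cite it rather than reprove it.

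For (ii), we use the standard argument for continuous monotone operators. Suppose $(x,u)$ satisfies $\inner{x-y}{u-Ay}\ge0$ for all $y$. Put $y=x-tz$ with $t>0$, divide by $t$, and let $t\downarrow0$ using continuity. This gives $\inner{z}{u-Ax}\ge0$ for every $z$, hence $u=Ax$, and so $A$ has no proper monotone extension.

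Item (iii) is immediate. Inversion swaps the coordinates of the graph, and the monotonicity inequality is symmetric in the two coordinates. Hence graph inclusions, and therefore maximality, are preserved under inversion.

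Item (iv) is Rockafellar's sum theorem, which we cite from \cite[Corollary 25.5]{BC17}. We expect it to be the main obstacle if one insisted on proving it directly, since it requires a Fitzpatrick-function or Minty-type argument. We will not reprove it.

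For (v), we verify monotonicity and surjectivity in the $S$-inner product. Monotonicity holds because for $u\in Ax$ and $u'\in Ax'$,
\[
\inner{x-x'}{S^{-1}u-S^{-1}u'}_S=\inner{x-x'}{u-u'}\ge0 .
\]
For maximality we check the range condition in (i), which applies in the Hilbert space $(\H,\inner{\cdot}{\cdot}_S)$. Note that $\Id+S^{-1}A=S^{-1}(S+A)$. The operator $S+A=(S-\alpha\Id)+(\alpha\Id+A)$ is maximally monotone by (ii) and (iv), since $S-\alpha\Id$ is continuous and monotone with full domain; here $\alpha$ is the strong monotonicity modulus of $S$. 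It is moreover $\alpha$-strongly monotone, so $\alpha^{-1}(S+A)$ is maximally monotone and strongly monotone with modulus $1$. Writing $\alpha^{-1}(S+A)=\Id+T$ with $T$ maximally monotone (possible because $\alpha^{-1}(S+A)-\Id$ is monotone and inherits maximality), (i) gives that $S+A$ is surjective. Since $S^{-1}$ is a bijection, $\ran(\Id+S^{-1}A)=\H$, and (i) applied in $(\H,\inner{\cdot}{\cdot}_S)$ yields maximality.
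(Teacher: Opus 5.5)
Your proposal is correct. The paper gives no argument of its own: it cites \cite{BC17} (Proposition 23.8, Corollary 20.28, Proposition 20.22 and Proposition 20.24) for (i), (ii), (iii) and (v), and \cite{Rockafellar1970} for (iv). For (i)--(iv) your route amounts to the same thing. The only nontrivial ingredients you invoke are Minty's theorem and Rockafellar's sum theorem, and your arguments for firm nonexpansiveness, for (ii) and for (iii) are the standard ones.

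The genuine difference is (v). There you go through the range condition, which costs you the sum theorem and two applications of Minty's theorem. This is valid, but a one-line argument from the definition suffices. Suppose $(x,u)$ satisfies $\inner{x-y}{u-S^{-1}w}_S\ge 0$ for all $(y,w)\in\gra A$. Then $\inner{x-y}{Su-w}\ge 0$ for all such pairs, so $Su\in Ax$ by maximality of $A$, that is, $u\in S^{-1}Ax$. This transfers maximality through the graph bijection $(y,w)\mapsto (y,S^{-1}w)$. It uses the strong monotonicity of $S$ only to make $\inner{\cdot}{\cdot}_S$ an equivalent inner product and to ensure $S^{-1}$ exists.

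Within your own route, the splitting $S+A=(S-\alpha\Id)+(\alpha\Id+A)$ is unnecessary, since $S$ itself is continuous and monotone with full domain.
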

\begin{proof}
   (i), (ii), (iii) and (v) hold by Proposition 23.8,  Corollary 20.28, Proposition 20.22, and Proposition 20.24 of \cite{BC17}, respectively. Part (iv) is from \cite{Rockafellar1970}. 
\smartqedmark \end{proof}

The following weak sequential closedness result will be useful in our analysis.

\begin{fact}
    \label{lemma:demiclosednessprinciple}
    Let $\Omega\subseteq \H$ be a closed linear subspace, $\bar{x}\in \H$ and $T:\H \toset \H$ be maximally monotone. Let $\{(u^k,v^k)\}$ be a sequence such that $v^k \in Tu^k$ and let $u^*,v^*\in \H$. Suppose that 
    \begin{equation}
    	u^k \toweak u^*, \quad v^k \toweak v^*, \quad \Pi_{\Omega}u^k \to 0, \quad \text{and} \quad \Pi_{\Omega^\perp}(v^k-\bar{x})\to 0,
    	\label{eq:demiclosedness_conditions}
    \end{equation}
    then $v^*\in Tu^*$.
\end{fact}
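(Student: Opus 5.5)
The plan is to use the characterization of maximal monotonicity through the graph. Since $T$ is maximally monotone, it suffices to show that
\[
\inner{u^*-x}{v^*-y}\ge 0 \qquad \text{for all } (x,y)\in\gra T .
\]
If this holds, then adding $(u^*,v^*)$ to $\gra T$ keeps the operator monotone, so maximality forces $(u^*,v^*)\in\gra T$.

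First, fix $(x,y)\in\gra T$. By monotonicity,
\[
0\le \inner{u^k-x}{v^k-y} = \inner{u^k}{v^k} - \inner{u^k}{y} - \inner{x}{v^k} + \inner{x}{y}.
\]
Since $u^k\toweak u^*$ and $v^k\toweak v^*$, the three terms other than $\inner{u^k}{v^k}$ converge to $-\inner{u^*}{y}-\inner{x}{v^*}+\inner{x}{y}$. So the whole argument reduces to proving $\inner{u^k}{v^k}\to\inner{u^*}{v^*}$. This is the only nontrivial step, because the product of two merely weakly convergent sequences need not converge to the product of the limits.

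To handle this, decompose with respect to $\H=\Omega\oplus\Omega^\perp$:
\[
\inner{u^k}{v^k}=\inner{\Pi_\Omega u^k}{\Pi_\Omega v^k}+\inner{\Pi_{\Omega^\perp}u^k}{\Pi_{\Omega^\perp}v^k}.
\]
Weakly convergent sequences are bounded, so $\{u^k\}$ and $\{v^k\}$ are bounded. The first term therefore tends to $0$, since $\Pi_\Omega u^k\to 0$ strongly and $\Pi_\Omega v^k$ is bounded.

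For the second term, write $\Pi_{\Omega^\perp}v^k=\Pi_{\Omega^\perp}\bar x+e^k$ with $e^k\coloneqq\Pi_{\Omega^\perp}(v^k-\bar x)\to 0$. Then $\inner{\Pi_{\Omega^\perp}u^k}{e^k}\to0$ by boundedness of $\{u^k\}$. Moreover, $\inner{\Pi_{\Omega^\perp}u^k}{\Pi_{\Omega^\perp}\bar x}\to\inner{\Pi_{\Omega^\perp}u^*}{\Pi_{\Omega^\perp}\bar x}$, because $\Pi_{\Omega^\perp}$ is bounded and linear, hence weakly continuous. Altogether,
\[
\inner{u^k}{v^k}\to\inner{\Pi_{\Omega^\perp}u^*}{\Pi_{\Omega^\perp}\bar x}.
\]

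It remains to identify this limit with $\inner{u^*}{v^*}$. By weak continuity of the projectors:
\begin{itemize}
\item $\Pi_\Omega u^k\toweak \Pi_\Omega u^*$, and since $\Pi_\Omega u^k\to0$ strongly, we get $\Pi_\Omega u^*=0$;
\item $\Pi_{\Omega^\perp}(v^k-\bar x)\toweak\Pi_{\Omega^\perp}(v^*-\bar x)$, and since this sequence tends to $0$ strongly, we get $\Pi_{\Omega^\perp}v^*=\Pi_{\Omega^\perp}\bar x$.
\end{itemize}
Hence
\[
\inner{u^*}{v^*}=\inner{\Pi_{\Omega^\perp}u^*}{\Pi_{\Omega^\perp}v^*}=\inner{\Pi_{\Omega^\perp}u^*}{\Pi_{\Omega^\perp}\bar x},
\]
where the first equality uses $\Pi_\Omega u^*=0$. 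This is exactly the limit computed above. Passing to the limit in the monotonicity inequality gives $\inner{u^*-x}{v^*-y}\ge0$ for every $(x,y)\in\gra T$, and maximality yields $v^*\in Tu^*$.
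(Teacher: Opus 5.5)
Your proof is correct. It takes a more self-contained route than the paper, whose proof is a reduction to a textbook result. The paper sets $C=\Omega^\perp$ and $D=\bar{x}+\Omega$ and checks three things: $D-D=\Omega=(C-C)^\perp$, $(\Id-\Pi_C)u^k=\Pi_{\Omega}u^k\to 0$, and $(\Id-\Pi_D)v^k=\Pi_{\Omega^\perp}(v^k-\bar{x})\to 0$. It then invokes \cite[Proposition~20.60]{BC17}, a closedness result for graphs of maximally monotone operators under weak convergence relative to such a pair of affine subspaces.

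Your argument is a direct proof of that proposition in this special case. The orthogonal splitting $\inner{u^k}{v^k}=\inner{\Pi_\Omega u^k}{\Pi_\Omega v^k}+\inner{\Pi_{\Omega^\perp}u^k}{\Pi_{\Omega^\perp}v^k}$, with one factor in each block converging strongly, gives $\inner{u^k}{v^k}\to\inner{u^*}{v^*}$. That is precisely what weak--weak convergence alone cannot guarantee. The rest is the standard passage to the limit in the monotonicity inequality followed by maximality.

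The paper's version is shorter but delegates the work to a reference whose hypotheses must be matched. Yours uses only boundedness of weakly convergent sequences and weak continuity of projectors. It also makes clear why the two strong-convergence hypotheses are exactly what is needed. As a by-product, you obtain $\Pi_\Omega u^*=0$ and $\Pi_{\Omega^\perp}v^*=\Pi_{\Omega^\perp}\bar{x}$, that is, $(u^*,v^*)\in C\times D$, which the cited proposition also asserts.
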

\begin{proof}
    Let $C= \Omega^\perp$ and $D= \bar{x} + \Omega$, so that $D-D = \Omega = (C-C)^\perp$. Meanwhile, $\Id - \Pi_C = \Pi_{C^\perp} = \Pi_\Omega$ and $(\Id - \Pi_D )v= v - (\bar{x} + \Pi_{\Omega}(v -\bar{x})) = \Pi_{\Omega^\perp}( v-\bar{x})$. Thus, by \eqref{eq:demiclosedness_conditions}, it follows that $(\Id - \Pi_C) u^k \to 0$ and $(\Id - \Pi_D)v^k \to 0$. Therefore, the conclusion holds by invoking \cite[Proposition 20.60]{BC17}.
\smartqedmark \end{proof}

\section{Proposed algorithm}\label{sec:gen_subspace}

In this section, we present our proposed algorithm and give an overview of the main result. The proofs and technical details are developed in \cref{sec:fundamentalinequality,sec:weak_z,sec:weak_x,sec:strong_x}.

\subsection{Reformulation and algorithm}

We consider the inclusion \eqref{eq:productspace}, namely
\begin{equation}\label{eq:productspace_recall}
    \text{find } \bx \in \bcalH \text{ such that } 0 \in \bA \bx + \bP \bB (\bR \bx) + N_{\hatOmega}(\bx),
\end{equation}
where $\bOmega_\bb = \{ \bx \in \bcalH : \bM^* \bx = \bb\}$ and $\bOmega = \ker (\bM^*)$; see also \eqref{eq:hatOmega}.  The problem classes \cref{eq:gen_prob_nocoupling,eq:gen_prob,eq:VI} fit the  unified formulation \eqref{eq:productspace_recall} under appropriate choices of $\bA,\bB,\bP,\bR,\bOmega_\bb$ as described in \cref{tab:productspace_embeddings}, subject to
the standing assumptions below.

We impose the following standing assumptions on \eqref{eq:productspace_recall}.
\begin{assumption}[Problem setting assumptions]\label{assume:blanket} The (nonlinear) operators $\bA$ and $\bB$, and linear operators $\bP$, $\bR$, and $\bM$ satisfy the following:
	\begin{assumptionlist}
		\item \label{assume:A_semimonotone} $\bA : \bcalH \rightrightarrows \bcalH$ is $(\bU,\bV)$-semimonotone.
		\item \label{assume:B_cocoercive} $\bB:\bcalK \to \bcalK$ is $\bW^{-1}$-cocoercive.
		\item \label{assume:PR} $\bP\in\mathcal{B}(\bcalK,\bcalH)$ and $\bR \in \mathcal{B}(\bcalH,\bcalK)$ satisfy $\bOmega \subseteq \ker (\bP^*-\bR) $.
		\item \label{assume:M_kernel_range} $\bM \in \mathcal{B}(\bcalH',\bcalH)$ has closed range, that is, $\ran (\bM) = \ker(\bM^*)^\perp = \bOmega^\perp$. 
	\end{assumptionlist}
\end{assumption}

\begin{table}[h]
	\centering
	\small
    \renewcommand{\arraystretch}{1.35}
	\newcolumntype{Y}{>{\raggedright\arraybackslash}X}
	\caption{Product space embeddings of the motivating problem classes.}
	\label{tab:productspace_embeddings}
	\begin{tabularx}{\textwidth}{@{}c@{}YYY@{}}
		\toprule
		Problem 
		& Operators and spaces
		& $\bP$ and $\bR$ 
		& $\bOmega$ and $\bb$\\
		\midrule
		
		\eqref{eq:gen_prob_nocoupling}
		&
		\(\bA=\diag(A_1,\ldots,A_n)\)
		
		\(\bB=\diag(B_1,\ldots,B_p)\)
		
		\(\bcalH=\H^n\) , \quad \(\bcalK=\H^p\),
		
		\(\bcalH'=\H^m\)
		
		&
		\(\bP^*\bx=\bR\bx=(u,\ldots,u)\in\H^p\)
		
		\(\forall \bx\in \bOmega = \ker(\bM^*)\)
		&
		 \(\bOmega =
		\{(u_1,\ldots,u_n):u_1=\cdots=u_n\}\)
		
		\(\mathbf b=0\)
		\\
		\midrule
		
		\eqref{eq:gen_prob}
		&
		
		\(\bA=\diag(A_1,\ldots,A_n,C_1,\ldots,C_r)\)
		
		\(\bB=\diag(B_1,\ldots,B_p)\)
		
		\(\bcalH=\H^n\times\K^r,\quad \bcalK=\H^p\)
		
		\(\bcalH'=\H^{n-1}\times\K^r\)
		&
		\(\bP^*\bx=\bR\bx=(u,\ldots,u)\in\H^p\)
		
		\(\forall \bx\in \bOmega = \ker(\bM^*)\)
		&
		 \(\bOmega=
		\{(u_1,\ldots,u_n,v_1,\ldots,v_r):
		u_1=\cdots=u_n,\;
		v_\ell =D_\ell u_1~\forall \ell\}\)
		
		\(\mathbf b= 0\)
		\\
		\midrule 
		
		\eqref{eq:VI}
		&
		\(\bA=\diag(A_1,\ldots,A_n)\)
		
		\(\bB=0\)
		
		\(\bcalH=\H_1\times\cdots\times\H_n,\)
		
		\(\bcalK=\{0\}, \quad \bcalH'=\H\)
		&
		\(\bP=0\)
		
		\(\bR=0\)
		&
		\( \bOmega = \{ (x_1,\dots,x_n): \sum_{i=1}^n D_ix_i=0 \}\)
		
		\(\mathbf b=b\)
		\\
		\midrule 

\begin{tabular}{c}
	\eqref{eq:VI}\\
	(lifted)
\end{tabular}

&
\(\bA=\diag(A_1,\ldots,A_n,0)\)

\(\bB=0\)

\(\bcalH=(\H_1\times\cdots\times\H_n)\times\H^n,\quad
\bcalK=\{0\}\)

\(\bcalH'=\H^n\times\H\)
&
\(\bP=0\)

\(\bR=0\)
&
\(	\bOmega =
\left\{
((x_i)_{i=1}^n,(D_ix_i)_{i=1}^n):\right. \)
\( \qquad \left.
\sum_{i=1}^nD_ix_i=0
\right\}.\)

\(\mathbf b=(0,b)\)
\\
		\bottomrule
	\end{tabularx}
\end{table}
Our proposed approach relies on a preconditioner-like operator $\bL$ and a vector
$\bq$ which allow us to reformulate \eqref{eq:productspace_recall} as a system of inclusions. Associated with $\bL$ and $\bq$, we define for $\gamma>0$ the operators $\Phi_{\gamma}: \bOmega^\perp \toset \bcalH$ given by
\begin{equation}
	\Phi_{\gamma}(\bw)
	\coloneqq
	(\gamma \bA+\gamma \bP\bB\bR+\bL)^{-1}(\bw+\bq),
	\qquad \bw\in\bOmega^\perp,
	\label{eq:Phi_gamma}
\end{equation}
and $\Psi_{\gamma}:\bcalH' \toset \bcalH'$ given by 
\begin{equation*}
	\Psi_{\gamma}(\bz)
	\coloneqq
	\bM^*\Phi_{\gamma}(\bM\bz)-\mathbf{b},
	\qquad \bz\in\bcalH'.
\end{equation*}
By Assumption~A(iv), $\ran(\bM)=\bOmega^\perp$, so that
$\Phi_\gamma(\bM\bz)$ is well defined as a set-valued expression for
every $\bz\in\bcalH'$. We impose the following standing
assumptions.

\begin{assumption}\label{assume:preconditioner} 	$\bL\in \mathcal{B}(\bcalH)$ and $\bq\in \bcalH$ satisfy the following.
	\begin{assumptionlist}
		\item \label{assume:blanket2} $
		\bL(\bOmega) \subseteq \bOmega^\perp$ and $
		\bL\bar{\bx}-\bq\in\bOmega^\perp$
		for some $\bar{\bx}\in\hatOmega.$\footnote{Since $\bL(\bOmega)\subseteq \bOmega^\perp$, the second condition implies that $\bL \bx' - \bq \in \bOmega^\perp$ for \textit{any} $\bx'\in \bOmega_\bb$. Moreover, note that for any $\bar{\bx}\in \hatOmega$, we may always take
			$\bq=\Pi_{\bOmega}(\bL\bar{\bx})$ so that
			$\bL\bar{\bx}-\bq\in\bOmega^\perp$.}
		 	
		\item \label{assume:L_s}
		The operator $\bL_s = \frac{1}{2}(\bL + \bL^*)$ satisfies $
		\ker(\bL_s)=\bOmega,$
		and $\bL_s$ is strongly monotone on $\bOmega^\perp$.\footnote{If $\bcalH$ is finite dimensional, this assumption is equivalent
			to having $\ker(\bL_s)=\bOmega$ and $\bL_s\succeq0$.}
	\end{assumptionlist}
\end{assumption}

The role of \cref{assume:blanket2} is clarified by the following equivalence
result.

\begin{proposition}\label{prop:equivalence_MLauxiliary}
	Suppose that \cref{assume:M_kernel_range} and \cref{assume:blanket2} hold,
	and let $\gamma>0$. Then $\bx\in\bcalH$ solves
	\eqref{eq:productspace_recall} if and only if there exists
	$\bz\in\bcalH'$ such that $(\bx,\bz)$ solves
	\begin{equation}
		\begin{cases}
			\bM\bz+\bq
			\in
			\gamma\bA\bx+\gamma\bP\bB(\bR\bx)+\bL\bx, \\[1mm]
			0=\bM^*\bx-\mathbf{b}.
		\end{cases}
		\label{eq:generalized_ML}
	\end{equation}
\end{proposition}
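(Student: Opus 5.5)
The argument is a direct two-way verification. It combines the description $N_{\hatOmega}(\bx)=\bOmega^\perp$ for $\bx\in\hatOmega$ (and $\varnothing$ otherwise) with the identity $\ran(\bM)=\bOmega^\perp$ from \cref{assume:M_kernel_range}. The key observation is that, on $\hatOmega$, the term $\bL\bx-\bq$ always lies in $\bOmega^\perp$. Indeed, write $\bx=\bar{\bx}+\bv$ with $\bv\in\bOmega$. Then $\bL\bx-\bq=(\bL\bar{\bx}-\bq)+\bL\bv\in\bOmega^\perp$ by both parts of \cref{assume:blanket2}, because $\bOmega^\perp$ is a linear subspace. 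This is the only place where \cref{assume:blanket2} enters, and I expect it to be the one nontrivial step.

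($\Rightarrow$) Let $\bx$ solve \eqref{eq:productspace_recall}. Then $N_{\hatOmega}(\bx)\neq\varnothing$, so $\bx\in\hatOmega$, which means $\bM^*\bx=\bb$; this is the second line of \eqref{eq:generalized_ML}. There are also $\ba\in\bA\bx$ and $\bu\in\bOmega^\perp$ with $0=\ba+\bP\bB(\bR\bx)+\bu$. Multiplying by $\gamma>0$ and adding $\bL\bx$ gives
\[
\gamma\ba+\gamma\bP\bB(\bR\bx)+\bL\bx=\bL\bx-\gamma\bu=(\bL\bx-\bq)-\gamma\bu+\bq .
\]
The vector $(\bL\bx-\bq)-\gamma\bu$ lies in $\bOmega^\perp$ by the key observation and since $\bu\in\bOmega^\perp$. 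As $\bOmega^\perp=\ran(\bM)$, it equals $\bM\bz$ for some $\bz\in\bcalH'$, and this $\bz$ satisfies the first line of \eqref{eq:generalized_ML}.

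($\Leftarrow$) Let $(\bx,\bz)$ solve \eqref{eq:generalized_ML}. The second line gives $\bx\in\hatOmega$, hence $N_{\hatOmega}(\bx)=\bOmega^\perp$. From the first line, choose $\ba\in\bA\bx$ with $\bM\bz+\bq=\gamma\ba+\gamma\bP\bB(\bR\bx)+\bL\bx$. Rearranging,
\[
0=\ba+\bP\bB(\bR\bx)+\gamma^{-1}\bigl[(\bL\bx-\bq)-\bM\bz\bigr].
\]
The bracket lies in $\bOmega^\perp$, again by the key observation together with $\bM\bz\in\ran(\bM)=\bOmega^\perp$. So the bracketed term is an element of $N_{\hatOmega}(\bx)$, and $\bx$ solves \eqref{eq:productspace_recall}. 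The argument uses only linear algebra and needs no monotonicity.
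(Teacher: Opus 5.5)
Your proof is correct and is essentially the paper's argument. Both directions rest on the same key observation, $\bL\bx-\bq=\bL(\bx-\bar{\bx})+(\bL\bar{\bx}-\bq)\in\bOmega^\perp$ for $\bx\in\hatOmega$, together with $N_{\hatOmega}(\bx)=\bOmega^\perp$ and $\ran(\bM)=\bOmega^\perp$; the only cosmetic difference is that the paper absorbs the factor $\gamma$ into the normal-cone element, whereas you carry $\gamma\bu$ and $\gamma^{-1}$ explicitly.
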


\begin{proof}
	If $(\bx,\bz)$ solves \eqref{eq:generalized_ML}, then $\bx \in \hatOmega$
	so that $\bx-\bar{\bx}\in\ker(\bM^*)=\bOmega$ and
	$N_{\hatOmega}(\bx)=\bOmega^\perp$. Thus,
	$\bL(\bx-\bar{\bx})\in\bOmega^\perp$ and therefore
	\begin{equation}
		\bL\bx-\bq
		=
		\bL(\bx-\bar{\bx})+(\bL\bar{\bx}-\bq)
		\in \bOmega^\perp
		\label{eq:Lx-v_in_Omegaperp}
	\end{equation}
	by \cref{assume:blanket2}. Consequently,
	$\bL\bx-\bq-\bM\bz\in\bOmega^\perp=N_{\hatOmega}(\bx)$.
	Hence, $\bx$ solves \eqref{eq:productspace_recall}.
	
	Conversely, if $\bx\in\bcalH$ solves \eqref{eq:productspace_recall},
	then $\bx\in\hatOmega$, i.e., $\bM^*\bx-\mathbf{b}=0$, and there exists
	$\bw\in\bOmega^\perp$ such that $
	0\in\gamma\bA\bx+\gamma\bP\bB(\bR\bx)+\bw.
	$
	By \eqref{eq:Lx-v_in_Omegaperp} and \cref{assume:M_kernel_range}, we have
	$
	\bL\bx-\bq-\bw\in\bOmega^\perp=\ran(\bM).
	$
	Hence, there exists $\bz$ such that
	$
	\bL\bx-\bq-\bw=\bM\bz.
	$
	Therefore, $(\bx,\bz)$ solves \eqref{eq:generalized_ML}. This completes the proof.
\smartqedmark \end{proof}

The equivalence above suggests the abstract iteration presented in \cref{algo:abstract_preconditioned_forward}. We refer to $\{\bx^k\}$ as the \textit{shadow sequence}.   Inspecting the main iterates $\{ \bz^k\}$, the iteration can be viewed as a forward scheme provided that $\Psi_{\gamma}$ is single-valued. The additional condition
\cref{assume:L_s} will be used later to establish cocoercivity of
$\Psi_\gamma$ for appropriate choices of $\gamma$. 
\begin{algorithm}[h]
	\caption{Abstract preconditioned forward scheme}
	\label{algo:abstract_preconditioned_forward}
	
	Let $\gamma>0$, let $(\lambda_k)_{k\in\mathbb{N}}\subset[0,+\infty)$,
	and let $\bz^0\in\bcalH'$. For each $k\in\mathbb{N}$, 
	
	\begin{enumerate}
		\item Find $\bx^k\in\bcalH$ such that $
		\bx^k \in \Phi_{\gamma}(\bM\bz^k),$
		that is, 
		\[
		\bM\bz^k+\bq
		\in
		\gamma\bA\bx^k+\gamma\bP\bB(\bR\bx^k)+\bL\bx^k.
		\]
		
		\item Update
		$
		\bz^{k+1}
		=
		\bz^k-\lambda_k(\bM^*\bx^k-\mathbf{b})
		\in
		\bz^k-\lambda_k\Psi_{\gamma}(\bz^k).
		$
	\end{enumerate}
\end{algorithm}

\subsection{Some realizations of the abstract scheme}\label{sec:realizations}

We illustrate how the abstract scheme can be used to derive concrete
splitting algorithms by choosing the operator-vector pair
$(\bL,\bq)$ according to the structure of the problem. In particular,
we give realizations for the linearly coupled inclusion
\eqref{eq:gen_prob} and the block-separable affine-constrained
inclusion \eqref{eq:VI}.
The multioperator inclusion \eqref{eq:gen_prob_nocoupling} has additional
consensus structure and will be treated separately in
\cref{sec:multi_inclusion}.

\begin{example}[A linearly coupled inclusion]
	\label{ex:realization_P2}
	Consider \eqref{eq:gen_prob}, with the product space embedding given in
	\cref{tab:productspace_embeddings}. Define
	\begin{align*}
		\bM(z_1,\ldots,z_{n-1},w_1,\ldots,w_r)
		\coloneqq
		\Bigg(
		&z_1-\sum_{\ell=1}^r D_\ell^*w_\ell,\,
		-z_1+z_2,\ldots,\\
		& -z_{n-2}+z_{n-1},-z_{n-1},\,
		w_1,\ldots,w_r
		\Bigg),
	\end{align*}
	so that $\bOmega=\ker(\bM^*)$.
	For
	$\bx=(x_1,\ldots,x_n,v_1,\ldots,v_r)\in\bcalH$, define
	$
	\bR\bx
	\coloneqq
	(x_{n-1},\ldots,x_{n-1})\in\H^p,
	$
	and, for $(y_1,\ldots,y_p)\in\H^p$, define
	$
	\bP(y_1,\ldots,y_p)
	=
	\left(
	0,\ldots,0,\,
	\sum_{j=1}^p y_j,\,
	0,\ldots,0
	\right),
	$
	where the nonzero component is the $n$th block.
	Then $\bP^*\bx=\bR\bx$ for every $\bx\in\bOmega$.
	
	Let $\alpha_i>0$, $i=1,\ldots,n-1$, and
	$\beta_\ell>0$, $\ell=1,\ldots,r$, and set
	$
	S\coloneqq\sum_{\ell=1}^r\beta_\ell D_\ell^*D_\ell.
	$
	Define $\bL=[L_{ij}]$ by $L_{11}
	=\alpha_1\Id$,  $L_{nn} =\alpha_{n-1}\Id$, $L_{n,n-1}
	=2S-2\alpha_{n-1}\Id,$ and
		\begin{align*}
		L_{ii}
		&=(\alpha_{i-1}+\alpha_i)\Id,
		&& i=2,\ldots,n-1,
		\\
		L_{i+1,i}
		&=-2\alpha_i\Id,
		&& i=1,\ldots,n-2,
		\\
		L_{n+\ell,n-1}
		&=-2\beta_\ell D_\ell,
		&& \ell=1,\ldots,r,
		\\
		L_{n+\ell,n}
		&=-2\beta_\ell D_\ell,
		&& \ell=1,\ldots,r,
		\\
		L_{n+\ell,n+\ell}
		&=2\beta_\ell\Id,
		&& \ell=1,\ldots,r,
	\end{align*}
	with all other blocks equal to zero, and let $\bq=0$. We first verify \cref{assume:blanket2}. If
	$
	\bx
	=
	(x,\ldots,x,D_1x,\ldots,D_rx)
	\in\bOmega,
	$
	then the sum of the first $n$ components of $\bL\bx$ is $2Sx$,
	while its $(n+\ell)$th component is $-2\beta_\ell D_\ell x$.
	By the definition of $S$, we have 
	 $\bL\bx\in\bOmega^\perp$. Since
	$\hatOmega=\bOmega$ and $\bq=0$, it follows that
	\cref{assume:blanket2} holds.
	
	Moreover, for any
	$\bx=(x_1,\ldots,x_n,v_1,\ldots,v_r)\in\bcalH$, a direct
	calculation yields
	\begin{align*}
		\inner{\bL_s\bx}{\bx}
		={}&
		\sum_{i=1}^{n-2}
		\alpha_i\norm{x_i-x_{i+1}}^2
		\\
		&+
		\inner{
			\left(
			\alpha_{n-1}\Id-\frac{1}{2}S
			\right)
			(x_{n-1}-x_n)
		}{
			x_{n-1}-x_n
		}
		\\
		&+
		\frac{1}{2}
		\sum_{\ell=1}^r
		\beta_\ell
		\norm{
			D_\ell(x_{n-1}+x_n)-2v_\ell
		}^2
		\\
		\geq{}&
		\sum_{i=1}^{n-2}
		\alpha_i\norm{x_i-x_{i+1}}^2
		+
		\left(
		\alpha_{n-1}-\frac{1}{2}\norm{S}
		\right)
		\norm{x_{n-1}-x_n}^2
		\\
		&+
		\frac{1}{2}
		\sum_{\ell=1}^r
		\beta_\ell
		\norm{
			D_\ell(x_{n-1}+x_n)-2v_\ell
		}^2.
	\end{align*}
	Thus, if $
	\alpha_{n-1}>\frac{1}{2}\norm{S},$
	then $\bL_s$ satisfies $\ker(\bL_s)=\bOmega$ and is strongly
	monotone on $\bOmega^\perp$, and therefore
	\cref{assume:L_s} holds.
	
	Writing
	$
	\bz^k
	=
	(z_1^k,\ldots,z_{n-1}^k,w_1^k,\ldots,w_r^k) $
    and
    $\bx^k
	=
	(x_1^k,\ldots,x_n^k,v_1^k,\ldots,v_r^k),
	$
	\cref{algo:abstract_preconditioned_forward} simplifies to
	\begin{equation}
	\begin{cases}
		x_1^k
		\in
		J_{\frac{\gamma}{\alpha_1}A_1}
		\left(
		\frac{1}{\alpha_1}
		\left(
		z_1^k
		-
		\sum\limits_{\ell=1}^r
		D_\ell^*w_\ell^k
		\right)
		\right),
		\\[4mm]
		x_i^k
		\in
		J_{\frac{\gamma}{\alpha_{i-1}+\alpha_i}A_i}
		\left(
		\frac{
			-z_{i-1}^k
			+z_i^k
			+2\alpha_{i-1}x_{i-1}^k
		}{
			\alpha_{i-1}+\alpha_i
		}
		\right),
		\quad i=2,\ldots,n-1,
		\\[4mm]
		x_n^k
		\in
		J_{\frac{\gamma}{\alpha_{n-1}}A_n}
		\left(
		\frac{1}{\alpha_{n-1}}
		\left(
		-z_{n-1}^k
		+
		(2\alpha_{n-1}\Id-2S)x_{n-1}^k
		-
		\gamma\sum\limits_{j=1}^p
		B_j(x_{n-1}^k)
		\right)
		\right),
		\\[4mm]
		v_\ell^k
		\in
		J_{\frac{\gamma}{2\beta_\ell}C_\ell}
		\left(
		D_\ell x_{n-1}^k
		+
		D_\ell x_n^k
		+
		\frac{1}{2\beta_\ell}w_\ell^k
		\right),
		\quad \ell=1,\ldots,r,
		\\[4mm]
		z_i^{k+1}
		=
		z_i^k
		-
		\lambda_k(x_i^k-x_{i+1}^k),
		\quad i=1,\ldots,n-1,
		\\[2mm]
		w_\ell^{k+1}
		=
		w_\ell^k
		-
		\lambda_k(v_\ell^k-D_\ell x_1^k),
		\quad \ell=1,\ldots,r.
	\end{cases}
    \label{eq:realization_P2}
	\end{equation}
	When $r=0$, the latter reduces to a weighted sequential
		forward-Douglas--Rachford method covered by \cite{DTT26} in the
		monotone setting. For $r\geq1$, it differs from the reduced-lifting
		primal-dual method in
		\cite{DPTT25}, while still requiring
		only one scalar-scaled resolvent evaluation of each $A_i$ and $C_\ell$
		per iteration. Unlike existing works, our framework provides convergence guarantees for \eqref{eq:realization_P2} under semimonotonicity assumptions beyond the classical
		monotone setting.
\end{example}

\begin{example}[A two-block affine constraint]
	\label{ex:realization_P3}
	Consider \eqref{eq:VI} with $n=2$, reformulated as \eqref{eq:productspace_recall} according to \cref{tab:productspace_embeddings}. 
	Choose $
	\bL=
	\begin{bmatrix}
		D_1^*D_1 & 0\\
		2D_2^*D_1 & D_2^*D_2
	\end{bmatrix}$, and 
	$\bq=
	\begin{bmatrix}
		0\\
		D_2^*b
	\end{bmatrix}.
	$
	A direct calculation shows that 
	$	\bL\bx=
	\bM(D_1x_1)
	\in \ran(\bM)=\bOmega^\perp$, for any $\bx = (x_1,x_2)\in \bOmega$. 
	 Moreover, if $\bar{\bx}=(\bar{x}_1,\bar{x}_2)\in\hatOmega$, then $\bL\bar{\bx}-\bq
	 =
	 \bM(D_1\bar{x}_1)
	 \in\bOmega^\perp.$ Thus, \cref{assume:blanket2} holds. 
	On the other hand, we have 
	\[
	\langle \bL_s\bx,\bx\rangle
	=
	\norm{D_1x_1+D_2x_2}^2
	=
	\|\bM^*\bx\|^2, \quad \forall \bx \in \bcalH.
	\]
	Hence, $\ker(\bL_s)=\bOmega$, and $\bL_s$ is strongly monotone on
	$\bOmega^\perp$. Thus, \cref{assume:L_s} holds.
	
	The abstract scheme \cref{algo:abstract_preconditioned_forward} becomes
	\begin{equation}
			\begin{cases}
			D_1^*z^k
			\in
			\gamma A_1x_1^k+D_1^*D_1x_1^k,
			\\[1mm]
			D_2^*(z^k+b)
			\in
			\gamma A_2x_2^k
			+2D_2^*D_1x_1^k
			+D_2^*D_2x_2^k,
			\\[1mm]
			z^{k+1}
			=
			z^k-\lambda_k(D_1x_1^k+D_2x_2^k-b).
		\end{cases}
		\label{eq:realization_multi-block}
	\end{equation}
	If $A_i=\hat{\partial} f_i$, one realization of \eqref{eq:realization_multi-block} is given by
	\[
	\begin{cases}
		x_1^k
		\in
		\argmin\limits_{x_1\in\H_1}
		\left\{
		f_1(x_1)
		+
		\frac{1}{2\gamma}\|D_1x_1-z^k\|^2
		\right\},
		\\[3mm]
		x_2^k
		\in
		\argmin\limits_{x_2\in\H_2}
		\left\{
		f_2(x_2)
		+
		\frac{1}{2\gamma}
		\|D_2x_2+2D_1x_1^k-b-z^k\|^2
		\right\},
		\\[3mm]
		z^{k+1}
		=
		z^k-\lambda_k(D_1x_1^k+D_2x_2^k-b).
	\end{cases}
	\]
	The resulting scheme is different from the classical two-block ADMM.
	In the special case $D_1=-D_2=\Id$ and $b=0$, however, it reduces
	directly to the relaxed Douglas--Rachford method. This contrasts with
	the classical ADMM-Douglas--Rachford correspondence, in which ADMM is
	viewed as Douglas--Rachford splitting applied to the dual problem. The convergence analysis developed here moreover applies under the
	semimonotonicity assumptions of the present framework.
\end{example}

\begin{example}[A multi-block affine constraint]
	\label{ex:realization_P3_multiblock}
	Consider \eqref{eq:VI}, reformulated as
	\eqref{eq:productspace_recall} according to the lifted embedding in
	\cref{tab:productspace_embeddings}.
	For
	$
	\bx=(x_1,\ldots,x_n,y_1,\ldots,y_n)\in\bcalH,
	$
	define
	\begin{align*}
		\bL\bx
		\coloneqq
		\Bigg(
		&D_1^*D_1x_1,\ldots,D_n^*D_nx_n,
		-2D_1x_1+y_1+\sum_{j=1}^ny_j,\ldots,
		-2D_nx_n+y_n+\sum_{j=1}^ny_j
		\Bigg),
	\end{align*}
	and let
	$
	\bq=0.$
	
	If $\bx\in\bOmega$, then $y_i=D_ix_i$ for all $i$ and
	$\sum_{i=1}^ny_i=0$, so that
	$
	\bL\bx
	=
	\bM(y_1,\ldots,y_n,0)
	\in\bOmega^\perp.
	$
	Similarly, if $\bar{\bx}\in\hatOmega$, then
	$\bar y_i=D_i\bar x_i$ and $\sum_{i=1}^n\bar y_i=b$, and hence
	$
	\bL\bar{\bx}-\bq
	=
	\bM(\bar y_1,\ldots,\bar y_n,b)
	\in\bOmega^\perp.
	$
	Thus, \cref{assume:blanket2} holds.
	
	Moreover, a direct calculation yields, for every
	$\bx=(x_1,\ldots,x_n,y_1,\ldots,y_n)\in\bcalH$,
	\[
	\inner{\bL_s\bx}{\bx}
	=
	\sum_{i=1}^n\norm{D_ix_i-y_i}^2
	+
	\norm{\sum_{i=1}^ny_i}^2
	=
	\norm{\bM^*\bx}^2.
	\]
	Hence, $\ker(\bL_s)=\bOmega$, and, by
	\cref{assume:M_kernel_range}, $\bL_s$ is strongly monotone on
	$\bOmega^\perp$. Thus, \cref{assume:L_s} holds.
	
	Writing
	$
	\bz^k=(z_1^k,\ldots,z_n^k,w^k)
	$
	and
	$
	\bx^k=(x_1^k,\ldots,x_n^k,y_1^k,\ldots,y_n^k), 
	$ and eliminating the variables $y_i^k$, 
	\cref{algo:abstract_preconditioned_forward} simplifies to
	\[
	\begin{cases}
		D_i^*z_i^k
		\in
		\gamma A_ix_i^k+D_i^*D_ix_i^k,
		\qquad i=1,\ldots,n,
		\\[3mm]
		s^k
		=
		\frac{1}{n+1}
		\sum\limits_{j=1}^n
		\left(
		2D_jx_j^k-z_j^k+w^k
		\right),
		\\[3mm]
		z_i^{k+1}
		=
		z_i^k-\lambda_k(z_i^k - D_ix_i^k-w^k +s^k),
		\qquad i=1,\ldots,n,
		\\[3mm]
		w^{k+1}
		=
		w^k-\lambda_k
		\left(
		s^k-b
		\right).
	\end{cases}
	\]
	In particular, the $n$ inclusions defining $x_i^k$ are decoupled
	and can be evaluated in parallel.
If $A_i=\hat{\partial}f_i$, then the first inclusion defining $x_i^k$
can be solved by choosing
\[
x_i^k
\in
\argmin_{x_i\in\H_i}
\left\{
f_i(x_i)
+
\frac{1}{2\gamma}
\norm{D_ix_i-z_i^k}^2
\right\},
\qquad i=1,\ldots,n.
\]
	The parallel $x_i$-updates are
	reminiscent of Jacobi-type (parallel) ADMM \cite{DengLaiPengYin2017}. The above iteration, however, is different since the coupling between the blocks is handled through the auxiliary variable $w^k$ and the quantity $s^k$.
\end{example}

\subsection{Main convergence theorem}

Our convergence analysis accommodates several semimonotonicity regimes. More precisely, we impose the following assumptions on the pair $(\bU,\bV)$ and on the interaction between $\bV$ and the forward operator $\bB$. Note that $\bU = (\bU_{ij})$ and $\bV = (\bV_{ij})$ denote the block decompositions of $\bU$ and $\bV$, respectively, with respect to $\bOmega$ as in \eqref{eq:2x2decomposition}. 
\begin{assumption}\label{assume:semimonotone_general}
	The semimonotonicity modulus $(\bU,\bV)$ of $\bA$ satisfies the following conditions.
	\begin{assumptionlist}
		\item \label{assume:U} Either $\bU=0$ or $\bU_{11}$ is strongly monotone. 
		\item \label{tab:cases} One of the following holds. 
		
		\begin{tabular}{@{} l p{0.74\linewidth} @{}}
			\toprule
			\textbf{Case I} & $\bV\equiv  0$. In this case, $\bB$ may be nonzero. \\ 
			\textbf{Case II} & $\bB=0$, $\bV_{22}\succeq0$, $\ran(\bV_{22})$ is closed, and
			$\ran(\bV_{21})\subseteq\ran(\bV_{22})$.
			\\
			
			\bottomrule
		\end{tabular}
	\end{assumptionlist}
\end{assumption}

Note that the strong monotonicity of $\bU_{11}$ (equivalently, strong monotonicity of $\bU$ on $\bOmega$) does not require $\bU$ to be monotone on $\bcalH$. In particular, $\bU$ may be nonmonotone. Thus, the convergence theorem applies in the following four configurations:
\begin{enumerate}[(C1)]
	\item Case~I holds and $\bU=0$;
	\item Case~I holds and $\bU_{11}$ is strongly monotone;
	\item Case~II holds and $\bU=0$;
	\item Case~II holds and $\bU_{11}$ is strongly monotone.
\end{enumerate}
The complete picture of the regimes considered can be visualized in \cref{tab:Gammastar}.

We next introduce the admissible values of the parameter $\gamma$
that arise in the convergence analysis. Define
	\begin{equation}
	\Lambda
	\coloneqq
	\Big\{
	\gamma>0:
	\bOmega^\perp
	\subseteq
	\dom(\Phi_{\gamma})
	\ \text{and}\
	\Phi_{\gamma}
	\ \text{is single-valued on } \bOmega^\perp
	\Big\},
	\label{eq:Lambda}
\end{equation}
where $\Phi_{\gamma}$ is given by \eqref{eq:Phi_gamma}.  Denote $\bX \coloneqq \bU/\bU_{11}$, 
\begin{align}
	\bTheta_{\gamma}& \coloneqq 
	\begin{cases}
		\frac{\gamma}{4}(\bP - \bR^*) \bW (\bP^* - \bR), & \text{in Case I},\\[4pt]
		-\frac{1}{\gamma}\,\bL^*(\bV/\bV_{22})\bL, & \text{in Case II},
		\end{cases} \notag \\ 
	\widehat{\bXi}_{\gamma} & \coloneqq 
		\bL_s - \bTheta_{\gamma} + \gamma \bX, \label{eq:bSigmahat}
\end{align}
and let 
\begin{equation}\label{eq:Gamma*}
	\Gamma{} \coloneqq \left\lbrace  \gamma>0: \lambda_{\max}\!\left((\bL_s^{1/2})^{\dagger}\left( \bTheta_{\gamma} - \gamma \bX\right)(\bL_s^{1/2})^{\dagger}\right)<1\right\rbrace .
\end{equation}   
The set $\Gamma{}$, or an estimate thereof, under different regimes (C1)--(C4), is presented in \cref{tab:Gammastar}. 
The roles of $\Lambda$ and $\Gamma{}$ are different. The condition
$\gamma\in\Lambda$ ensures that $\Phi_\gamma$ is single-valued on
$\bOmega^\perp$ and has the required domain, so that the iteration is
well defined. On the other hand, $\gamma\in\Gamma{}$ guarantees the
cocoercivity estimate for $\Psi_\gamma$ used in the convergence
analysis. Thus, the admissible values of $\gamma$ in the main theorem
belong to $\Lambda\cap\Gamma{}$.

The following theorem is the main result of this paper on the convergence of \cref{algo:abstract_preconditioned_forward}. For the convergence of the shadow sequence, Cases C1 and C3 (\textit{i.e.,} cases where $\bU = 0$) are covered by \cref{thm:mainresult}(ii). Cases C2 and C4 are covered by \cref{thm:mainresult}(iii). To focus on the main ideas, we defer the proof of this theorem to \cref{sec:proof_mainresult}.

\begin{theorem}[Main convergence theorem]\label{thm:mainresult}
Suppose that \cref{assume:blanket,assume:preconditioner,assume:semimonotone_general} hold, and let $\Lambda$ and $\Gamma{}$ be given in \eqref{eq:Lambda} and \eqref{eq:Gamma*}, respectively. 
    Fix $\gamma \in \Lambda \cap \Gamma{}$ and let $\beta_{\gamma}\coloneqq 
		\left\lVert \left(\widehat{\bXi}_{\gamma}^{1/2}\right)^\dagger \bM \right\rVert^{-2}$, where $\widehat{\bXi}_{\gamma}$ is given by \eqref{eq:bSigmahat}.  
	If $\lambda_k\in [0,2\beta_\gamma]$ satisfies $\sum_{k=0}^{\infty} \lambda_k (2\beta_\gamma-\lambda_k) = +\infty$,  then the following hold: 
	\begin{enumerate}
		\item \textit{(Weak convergence of the main sequence).} $\{\bz^k\}$ is bounded and converges weakly to some point $\bz^*\in \zer(\Psi_{\gamma})$. Moreover,
		$\Psi_{\gamma}(\bz^k)=\bM^*\bx^k-\bb\to 0$.
		\item  \textit{(Weak convergence of the shadow sequence).} If $\bA$ is maximally $\bV$-comonotone, $\bU = 0$, and $\{\bx^k\}$ is bounded, then there exists a solution $\bx^*$ of \eqref{eq:productspace_recall} such that $\bx^k\toweak \bx^*$.
		\item  \textit{(Strong convergence of the shadow sequence).} If $\bU$ is strongly monotone on $\bOmega$, then  there exists a solution $\bx^*$ of \eqref{eq:productspace_recall} such that $\bx^k\to \bx^*$.
	\end{enumerate}
\end{theorem}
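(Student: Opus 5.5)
The proof reduces everything to a single fundamental inequality showing that $\Psi_\gamma$ is $\beta_\gamma$-cocoercive on $\bcalH'$. Once that is in hand, \cref{algo:abstract_preconditioned_forward} is a relaxed forward (gradient-type) iteration $\bz^{k+1}=\bz^k-\lambda_k\Psi_\gamma(\bz^k)$ with a cocoercive operator. The stepsize condition $\lambda_k\in[0,2\beta_\gamma]$, $\sum\lambda_k(2\beta_\gamma-\lambda_k)=\infty$, then gives Fejér monotonicity of $\{\bz^k\}$ with respect to $\zer(\Psi_\gamma)$. By \cref{prop:equivalence_MLauxiliary} this set is nonempty, since a solution of \eqref{eq:productspace_recall} yields some $\bz$ with $\Phi_\gamma(\bM\bz)=\bx$ and $\bM^*\bx=\bb$. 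Fejér monotonicity also gives $\sum\lambda_k(2\beta_\gamma-\lambda_k)\|\Psi_\gamma(\bz^k)\|^2<\infty$. Because a cocoercive operator is maximally monotone and Lipschitz, the standard Krasnosel'skii--Mann argument gives $\Psi_\gamma(\bz^k)\to 0$ and weak convergence of $\bz^k$ to a zero. This proves (i).

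\textbf{Cocoercivity of $\Psi_\gamma$ (main obstacle).} Take $\bz,\bz'$ and set $\bx=\Phi_\gamma(\bM\bz)$, $\bx'=\Phi_\gamma(\bM\bz')$, with $\Delta\bx=\bx-\bx'$. Then $\bM(\bz-\bz')-\bL\Delta\bx-\gamma\bP(\bB\bR\bx-\bB\bR\bx')\in\gamma(\bA\bx-\bA\bx')$. Pairing with $\Delta\bx$ and using semimonotonicity gives
\[
\inner{\bz-\bz'}{\Psi_\gamma\bz-\Psi_\gamma\bz'}\ \ge\ \inner{\bL_s\Delta\bx}{\Delta\bx}+\gamma\inner{\bU\Delta\bx}{\Delta\bx}+\gamma\,(\text{$\bV$-term})+\gamma\inner{\bP\Delta\bB}{\Delta\bx},
\]
where $\Delta\bB=\bB\bR\bx-\bB\bR\bx'$. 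The remaining terms are handled as follows.

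In Case I, split $\bP=\bR^*+(\bP-\bR^*)$. Cocoercivity gives $\inner{\Delta\bB}{\bR\Delta\bx}\ge\|\Delta\bB\|^2_{\bW^{-1}}$. Young's inequality (\cref{lemma:Young}) then absorbs $\inner{\Delta\bB}{(\bP^*-\bR)\Delta\bx}$ and leaves $-\inner{\bTheta_\gamma\Delta\bx}{\Delta\bx}$. By \cref{assume:PR} this term only sees the $\bOmega^\perp$-component.

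In Case II, write the $\bV$-term using $\gamma\bA\bx\ni \bM\bz-\bL\bx$. Since $\bM(\bz-\bz')\in\bOmega^\perp$, the block decomposition and \cref{lemma:schurcomplementlemma}(ii) bound it below by $-\tfrac1\gamma\inner{\bL^*(\bV/\bV_{22})\bL\Delta\bx}{\Delta\bx}$; the $\bV_{22}$ part is nonnegative and the range inclusion is what allows the Schur reduction.

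For the $\bU$-term, decompose $\Delta\bx=\Delta\bx_1+\Delta\bx_2$ along $\bOmega\oplus\bOmega^\perp$. When $\bU_{11}$ is strongly monotone, minimizing over the $\bOmega$-component gives $\inner{\bU\Delta\bx}{\Delta\bx}\ge\inner{\bX\Delta\bx_2}{\Delta\bx_2}$ plus a strongly monotone remainder on $\bOmega$, which is kept for part (iii).

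Together these yield $\inner{\bz-\bz'}{\Psi_\gamma\bz-\Psi_\gamma\bz'}\ge\inner{\widehat\bXi_\gamma\Delta\bx}{\Delta\bx}$. The condition $\gamma\in\Gamma$ makes $\widehat\bXi_\gamma\succeq0$ with kernel controlled by $\ker\bL_s=\bOmega$. To finish, write $\Psi_\gamma\bz-\Psi_\gamma\bz'=\bM^*\Delta\bx=\bM^*(\widehat\bXi_\gamma^{1/2})^{\dagger}\widehat\bXi_\gamma^{1/2}\Delta\bx$, which is legitimate since $\bM^*$ vanishes on $\bOmega$. This gives $\|\bM^*\Delta\bx\|^2\le\beta_\gamma^{-1}\|\Delta\bx\|^2_{\widehat\bXi_\gamma}$. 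The delicate point is checking that the pseudo-inverses and ranges match on $\bOmega^\perp$, and that the $\bOmega$-components of $\Delta\bx$ truly drop out. In Case I this holds because $\bTheta_\gamma$ vanishes on $\bOmega$ by \cref{assume:PR}, and $\bL_s$ vanishes there by \cref{assume:L_s}.

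\textbf{Shadow sequence.} For (iii), the retained strongly monotone remainder on $\bOmega$, together with the inequality above applied with $\bz'=\bz^*$, gives $\|\Pi_{\bOmega}(\bx^k-\bx^*)\|\to0$ and $\|\bx^k-\bx^*\|_{\widehat\bXi_\gamma}\to0$. Here $\bx^*=\Phi_\gamma(\bM\bz^*)$, and the left-hand side tends to $0$ because $\bz^k$ is bounded and $\Psi_\gamma(\bz^k)\to0=\Psi_\gamma(\bz^*)$. Strong monotonicity of $\bL_s$ on $\bOmega^\perp$ controls the $\bOmega^\perp$ component, so $\bx^k\to\bx^*$. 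By \cref{prop:equivalence_MLauxiliary}, $\bx^*$ solves \eqref{eq:productspace_recall}.

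For (ii), with $\bU=0$, extract a weakly convergent subsequence of the bounded $\{\bx^k\}$. Use the maximally monotone operator $T=(\gamma\bA+\bV\text{-shift})^{-1}$, or the inverse of $\gamma\bA+\gamma\bP\bB\bR$ suitably shifted to be monotone. Then apply \cref{lemma:demiclosednessprinciple} with $u^k$ the "$\bA$-values" $\bM\bz^k+\bq-\bL\bx^k-\gamma\bP\bB\bR\bx^k$ and $v^k=\bx^k$. The required conditions are $\Pi_{\bOmega^\perp}(\bx^k-\bar\bx)\to0$, which follows from $\bM^*\bx^k-\bb\to0$ and the closed range of $\bM$, and $\Pi_\bOmega u^k\to0$. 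The latter uses $\bL(\bOmega)\subseteq\bOmega^\perp$, $\bL_s$-control, and the cocoercive term, where $\bB\bR\bx^k$ converges strongly by the cocoercivity estimate. Hence every weak cluster point solves the problem. Uniqueness of the cluster point follows since $\bx^*=\Phi_\gamma(\bM\bz^*)$ is determined by the unique weak limit $\bz^*$, via the same demiclosedness identification and single-valuedness ($\gamma\in\Lambda$).
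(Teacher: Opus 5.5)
Your treatment of (i) and (iii) is essentially the paper's. It uses the fundamental inequality, the splitting $\bU=\bX+\bY$ with $\bY\succeq0$, cocoercivity of $\Psi_\gamma$ from $\widehat{\bXi}_\gamma\succeq\beta_\gamma\bM\bM^*$, the standard forward-iteration argument, and the $\bY$-remainder for strong convergence. There are two small slips:
\begin{itemize}
\item In Case II the Schur-complement bound contributes $+\frac{1}{\gamma}\inner{\bL^*(\bV/\bV_{22})\bL\Delta\bx}{\Delta\bx}=-\inner{\bTheta_\gamma\Delta\bx}{\Delta\bx}$, not its negative.
\item In (iii) the remainder $\inner{\bY\Delta\bx}{\Delta\bx}$ couples the two components through $\bU_{12}$. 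You must first get $\Pi_{\bOmega^\perp}(\bx^k-\bx^*)\to0$, either from strong monotonicity of $\widehat{\bXi}_\gamma$ (not $\bL_s$) on $\bOmega^\perp$ or from $\bM^*\bx^k\to\bb$. Only then can you deduce $\Pi_{\bOmega}(\bx^k-\bx^*)\to0$.
\end{itemize}

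The genuine gap is in (ii): the triple $(T,u^k,v^k)$ you feed into \cref{lemma:demiclosednessprinciple} does not satisfy its hypotheses. In Case I you take $u^k=\bM\bz^k+\bq-\bL\bx^k-\gamma\bP\bB\bR\bx^k$, the pure $\gamma\bA$-value, and assert $\Pi_{\bOmega}u^k\to0$. This is false in general when $\bB\neq0$. A solution comes with $\ba^*\in\bA\bx^*$ such that $\ba^*+\bP\bB\bR\bx^*\in\bOmega^\perp$, so $\Pi_{\bOmega}\ba^*=-\Pi_{\bOmega}\bP\bB\bR\bx^*$, which is typically nonzero. For \eqref{eq:gen_prob_nocoupling} this is the identity $\sum_i a_i^*=-\sum_j B_jx^*$. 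Even granting strong convergence of $\bB\bR\bx^k$, $\Pi_{\bOmega}u^k$ would converge to this nonzero vector, not to zero. (That strong convergence itself needs more than your estimate, since \eqref{eq:PBR_monotone} cancels the $\norm{\Delta\bs}_{\bW^{-1}}^2$ term exactly.)

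The forward term must sit inside the operator. The paper uses $\mathbf{T}=(\bA+\bP\bB\bR+\widetilde{\bU})^{-1}$ with $\widetilde{\bU}=\frac14(\bP-\bR^*)\bW(\bP^*-\bR)$. This operator is maximally monotone by \eqref{eq:PBR_monotone}, continuity and Rockafellar's sum theorem. It also satisfies $\widetilde{\bU}=\Pi_{\bOmega^\perp}\widetilde{\bU}\Pi_{\bOmega^\perp}$ by \cref{assume:PR}, so the shift leaves $\Pi_{\bOmega}\bu^k\to0$ intact. Your ``suitably shifted'' alternative gestures at this, but with it $u^k$ is no longer the $\bA$-value you wrote, and you identify neither the shift nor its vanishing on $\bOmega$.

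Case II is where the missing idea is essential. Comonotonicity constrains $\bA^{-1}$, not $\bA$, and a bounded linear shift of $\gamma\bA$ does not in general make a $\bV$-comonotone operator monotone when $\bV\not\succeq0$. The natural maximally monotone operator is $\bA^{-1}-\bV$ (\cref{lemma:V-comonotone}). This forces $v^k=\bx^k-\bV\ba^k$, with $\ba^k\in\bA\bx^k$ (up to scaling by $\gamma$), rather than $v^k=\bx^k$. But then $\Pi_{\bOmega^\perp}(v^k-\bar{\bx})$ contains $\bV_{22}\Pi_{\bOmega^\perp}\ba^k$. The $\bOmega^\perp$-part of the $\bA$-values is the constraint multiplier; it in general converges only weakly, and to a nonzero limit. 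So the second hypothesis of \cref{lemma:demiclosednessprinciple} fails.

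The paper's remedy is to shift only by $\widetilde{\bV}=\bV-\bV_{22}$. Then $\mathbf{T}=\bA^{-1}-\widetilde{\bV}=(\bA^{-1}-\bV)+\bV_{22}$ is maximally monotone precisely because $\bV_{22}\succeq0$, a Case II hypothesis your argument for (ii) never uses. Since $\Pi_{\bOmega^\perp}\widetilde{\bV}=\Pi_{\bOmega^\perp}\bV\Pi_{\bOmega}$, the quantity $\Pi_{\bOmega^\perp}(\bv^k-\bar{\bx})$ involves only $\Pi_{\bOmega^\perp}(\bx^k-\bar{\bx})$ and $\Pi_{\bOmega}\bu^k$, both of which tend to zero. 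This is \cref{lemma:maximalmonotonedecomposition} with the bound \eqref{eq:bounds_uv_projections}. Without such a construction, you cannot identify the weak cluster points of $\{\bx^k\}$ with $\Phi_\gamma(\bM\bz^*)$, and your uniqueness step does not go through.
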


\begin{table}[ht]
	\centering
	\renewcommand{\arraystretch}{1.25}
	\setlength{\tabcolsep}{8pt}
	\caption{\small Summary of notation. $\bU = (\bU_{ij})$ and $\bV = (\bV_{ij})$ denote the block decomposition of $\bU$ and $\bV$, respectively, with respect to $\bOmega$ as in \eqref{eq:2x2decomposition}.}
	\label{tab:notation}
	
	\begin{tabular}{@{} l p{0.74\linewidth} @{}}
		\toprule
		Symbol & Definition \\
		\midrule
		$\beta_{\gamma}$ &
		$\left\lVert \left(\widehat{\bXi}_{\gamma}^{1/2}\right)^\dagger \bM \right\rVert^{-2}
		$
		\\[6pt]
		
		$\Lambda$ & $\Big\{
		\gamma>0:
		\bOmega^\perp
		\subseteq
		\dom(\Phi_{\gamma})
		\ \text{and}\
		\Phi_{\gamma}
		\ \text{is single-valued on } \bOmega^\perp
		\Big\}$ \\[6pt]
		
		$\Gamma{}$ & $\left\lbrace  \gamma>0: \lambda_{\max}\!\left((\bL_s^{1/2})^{\dagger}\left( \bTheta_{\gamma} - \gamma \bX\right)(\bL_s^{1/2})^{\dagger}\right)<1\right\rbrace $. Spectral condition ensuring the cocoercivity of $\Psi_\gamma$
		whenever $\gamma\in\Lambda$. \\[6pt]
		
		\midrule 
		
		$\bX$ & $\bU/\bU_{11}$  
		\\[6pt]
		
		$\bTheta_{\gamma}$ &
		$
		\begin{cases}
			\frac{\gamma}{4}(\bP - \bR^*) \bW (\bP^* - \bR), & \text{in Case I}\\[4pt]
			-\frac{1}{\gamma}\,\bL^*(\bV/\bV_{22})\bL, & \text{in Case II}
		\end{cases}
		$ 
		\\[8pt]
		
		$\widehat{\bXi}_{\gamma}$ &
		$
		\bL_s - \bTheta_{\gamma} + \gamma \bX
		$
		\\[6pt]
		\midrule 
		
		$\rho$ &
		$
		\lambda_{\max}\!\left( (\bL_s^{1/2})^\dagger \Bigl[(\bP - \bR^*) \bW (\bP^* - \bR) -4\bX\Bigr] (\bL_s^{1/2})^\dagger \right)
		$
		\\[8pt]
		
		$\rho_1$ &
		$
		\lambda_{\max}\!\left( (\bL_s^{1/2})^\dagger (\bP - \bR^*) \bW (\bP^* - \bR) (\bL_s^{1/2})^\dagger \right)
		$
		\\[8pt]
		
		$\rho_2$ &
		$
		\lambda_{\min}\!\left( (\bL_s^{1/2})^\dagger \bX (\bL_s^{1/2})^\dagger \right)
		$
		\\[8pt]
		
		$\rho_3$ &
		$
		\lambda_{\min}\!\left( (\bL_s^{1/2})^\dagger \bL^* (\bV/\bV_{22}) \bL (\bL_s^{1/2})^\dagger \right)
		$
		\\[10pt]
		
		$\rho_{\ell},\rho_{r}$ &
		$\rho_{\ell}<\rho_{r}$ are the (real) roots of the quadratic equation
		$\rho_2\gamma^2+\gamma+\rho_3=0$ with $\rho_2\rho_3<1/4$.
		\\
		
		\bottomrule
	\end{tabular}
\end{table}

\begin{table}[ht]
	\centering
	\renewcommand{\arraystretch}{1.35}
	\setlength{\arrayrulewidth}{0.8pt}
	\caption{\small The set $\Gamma{}$ (or a nonempty subset thereof) under the different
		semimonotonicity regimes. Together with the requirement
		$\gamma\in\Lambda$, these give the admissible values of $\gamma$ in
		\cref{thm:mainresult}. The cases where we only provide a nonempty estimate of $\Gamma{}$ are marked with $^\ddagger$. The shaded (gray) entries correspond to Case~I in \cref{tab:cases}, while the unshaded (white) entries correspond to Case~II. The parameters $\rho$, $\rho_1$, $\rho_2,$ and $\rho_3$ are defined in \cref{tab:notation}.}
	\label{tab:Gammastar}
	\begin{threeparttable}
		\begin{tabular}{|c|c!{\vrule width \Thick}c!{\vrule width \Thick}c|c|}
			
			\hline
			\multicolumn{2}{|c!{\vrule width \Thick}}{} &
			\multirow{2}{*}{$\bU = 0$} &
			\multicolumn{2}{c|}{$\bU_{11}\ \text{strongly monotone}$}\\
			\cline{4-5}
			\multicolumn{2}{|c!{\vrule width \Thick}}{} &
			& \multicolumn{1}{c|}{$\bU/\bU_{11}\succeq 0$} &
			\multicolumn{1}{c|}{$\bU/\bU_{11}\not\succeq 0$}\\
			\HThick
			
			\multicolumn{2}{|c!{\vrule width \Thick}}{$\bB \neq 0$, $\bV = 0$} &
			\cellcolor{gray!25}$\left(0,\;\frac{4}{\rho_1}\right)$ &
			\multicolumn{2}{c|}{\cellcolor{gray!25}$\left(0,\;\frac{4}{\rho_{+}}\right)$}\\
			\hline
			
			\multirow{3}{*}{$\bB=0$, $\bV_{22}\succeq 0$} &
			$\bV=0$ &
			\multicolumn{2}{c|}{\cellcolor{gray!25}$\left(0,\;+\infty\right)$} &
			\cellcolor{gray!25}$\left(0,\;\frac{1}{-\rho_2}\right)$\\
			\cline{2-5}
			
			& $\bV/\bV_{22}\succeq 0$\quad \tnote{\textit{a}} &
			\multicolumn{2}{c|}{$\left(0,\;+\infty\right)$} &
			$ \left(0,\;\frac{1}{-\rho_2}\right)\subseteq \Gamma{}$ \quad \tnote{\textit{b} }\\
			\cline{2-5}
			
			& $\bV/\bV_{22}\not\succeq 0$\quad \tnote{\textit{a}} &
			\multicolumn{1}{c|}{$\left(-\rho_3,\;+\infty\right)$} & \multicolumn{1}{c|}{$\left(-\rho_3,\;+\infty\right)\subseteq \Gamma{}$ \quad \tnote{\textit{b} }}  &
			$ \left(\rho_l,\;\rho_r\right)\subseteq \Gamma{}$ \quad  \tnote{\textit{b} } \tnote{\textit{c}}\\
			\hline
		\end{tabular}
		
		\begin{tablenotes}[flushleft]
			\footnotesize
			\item[\textit{a}] Additionally, it is assumed here that $\ran(\bV_{22})$ is closed and $\ran(\bV_{21})\subseteq \ran(\bV_{22})$.
			\item[\textit{b}] The actual set $\Gamma{}$ is given by $$\Gamma{} = \{\gamma >0 : \gamma + \lambda_{\min}\left( (\bL_s^{1/2})^\dagger  [\gamma^2 \bX +\bL^*(\bV/\bV_{22})\bL](\bL_s^{1/2})^\dagger \right) >0\}.$$
			\item[\textit{c}]  For this estimate, assuming that $\rho_2\rho_3<1/4$, the quadratic equation $\rho_2\gamma^2+\gamma+\rho_3=0$ has two roots $\rho_l<\rho_r$. For any $\gamma\in(\rho_l,\rho_r)\neq \varnothing $, the inequality $\rho_2\gamma^2+\gamma+\rho_3>0$ holds so that $\gamma \in \Gamma{}$. 
		\end{tablenotes}	
	\end{threeparttable}
\end{table}

\begin{example}
	We briefly indicate how \cref{assume:semimonotone_general} is satisfied in
	\cref{ex:realization_P2,ex:realization_P3,ex:realization_P3_multiblock} for some scalar-modulus cases.
	
	\begin{enumerate}
	\item Consider the linearly coupled realization \eqref{eq:gen_prob} in
	\cref{ex:realization_P2}. Suppose that $A_i$ is
	$(\mu_i\Id,\nu_i\Id)$-semimonotone for $i=1,\ldots,n$, and that
	$C_\ell$ is $(\eta_\ell\Id,\xi_\ell\Id)$-semimonotone for
	$\ell=1,\ldots,r$. Set
	$
	\bU\coloneqq
	\diag(
	\mu_1\Id,\ldots,\mu_n\Id,
	\eta_1\Id,\ldots,\eta_r\Id
	)
	$
	and define $\bV$ analogously using $\nu_i$ and $\xi_\ell$.
	If $\bB\neq0$, then \cref{assume:semimonotone_general} holds when
	$\bV=0$, and either $\bU=0$ or $\bU$ is strongly monotone on
	$\bOmega$. The latter condition is equivalent to the strong
	monotonicity of
	$
	\left(\sum_{i=1}^n\mu_i\right)\Id
	+
	\sum_{\ell=1}^r\eta_\ell D_\ell^*D_\ell
	$ on $\H$.
	
	When $\bB=0$, nonzero $\bV$ can also be allowed through
	Case~II. Suppose, for simplicity, that either $\nu_i>0$ for every
	$i$, or there is a unique $j$ such that $\nu_j<0$, $\nu_i>0$ for
	$i\neq j$, and
	$
	\sum_{i=1}^n \nu_i^{-1}<0.
	$
	Define $
	\kappa\coloneqq
	\left(\sum_{i=1}^n\nu_i^{-1}\right)^{-1},$ 
	$\bD x\coloneqq(D_1x,\ldots,D_rx),
	$
	and
	$
	\bXi\coloneqq
	\diag(\xi_1\Id,\ldots,\xi_r\Id).
	$
	By an argument analogous to
	\cref{lemma:UV_specialcase}(ii), $\bV_{22}$ is strongly monotone
	whenever $\bXi+\kappa\bD\bD^*$ is strongly monotone on $\K^r$.
	Consequently,
	$\bV_{22}\succeq0$ and
	$\ran(\bV_{22})=\bOmega^\perp$.
	In particular, $\ran(\bV_{22})$ is closed and
	$\ran(\bV_{21})\subseteq\ran(\bV_{22})$, so the requirements of
	Case~II are satisfied.
		
		\item Consider the affine-constrained realization in
		\cref{ex:realization_P3}. Suppose that $A_1$ and $A_2$ are
		$\mu_1\Id$-monotone and $\mu_2\Id$-monotone, respectively, and set
		$\bU\coloneqq\diag(\mu_1\Id,\mu_2\Id)$. Assume, for simplicity, that
		$D_2$ is invertible, and define
		$a\coloneqq\inf_{\|x\|=1}\|D_2^{-1}D_1x\|^2$ and
		$b\coloneqq\|D_2^{-1}D_1\|^2$. Since
		$\bOmega=\{(x_1,x_2):D_1x_1+D_2x_2=0\}$, every element of $\bOmega$ can be
		written as $(x,-D_2^{-1}D_1x)$. Hence, $\bU$ is strongly monotone on
		$\bOmega$ if and only if $\mu_1+\mu_2 a>0$ when $\mu_2\geq\mu_1$, and
		$\mu_1+\mu_2 b>0$ when $\mu_2<\mu_1$. 	 
		
		\item Consider the multi-block affine-constrained realization in
		\cref{ex:realization_P3_multiblock}. Suppose that each $A_i$ is
		$\mu_i\Id$-monotone, and set
		$
		\bU
		\coloneqq
		\diag
		\left(
		\mu_1\Id,\ldots,\mu_n\Id,
		0,\ldots,0
		\right),$ and $
		\bV=0.
		$
		Hence, $\bU$ is strongly monotone on $\bOmega$ if and only if there
		exists $c>0$ such that
		$
		\sum_{i=1}^n
		\mu_i\norm{x_i}^2
		\geq
		c
		\sum_{i=1}^n
		\left(
		\norm{x_i}^2+\norm{D_ix_i}^2
		\right)
		$
		whenever
		$
		\sum_{i=1}^nD_ix_i=0.
		$
		Equivalently, since each $D_i$ is bounded, this holds if and only if
		there exists $\widetilde c>0$ such that
		$
		\sum_{i=1}^n
		\mu_i\norm{x_i}^2
		\geq
		\widetilde c
		\sum_{i=1}^n\norm{x_i}^2$  whenever 
		$\sum_{i=1}^nD_ix_i=0.
		$
	In particular, this condition may hold even when some $\mu_i<0$.
	For $n=2$ and invertible $D_2$, it reduces to the condition in
	(ii) above, with the same constants $a$ and $b$, and therefore allows
	one of the operators to be nonmonotone.
	\end{enumerate}
	
\end{example}

\section{Multioperator inclusions}\label{sec:multi_inclusion}

In this section, we consider the inclusion problem \eqref{eq:gen_prob_nocoupling}, which we recall as follows:
\begin{align}
\text{find } x \in \H \text{ such that } 0
&\in \sum_{i=1}^n A_i x + \sum_{j=1}^p B_j x .\label{eq:gen_prob_recall} \tag{P1}
\end{align}
Throughout this section, we assume that the solution set of \eqref{eq:gen_prob_recall} is nonempty. We work under the following assumptions.
\begin{assumption}\label{assume:blanket_multioperator}
    $A_i:\H\toset \H$ ($i=1,\dots,n$) and $B_j : \H \to \H$ ($j=1,\dots,p$) satisfy the following:
    \begin{enumerate}
        \item $A_i$ is maximally $(U_i,V_i)$-semimonotone.
        \item $B_j$ is $W_j^{-1}$-cocoercive.
    \end{enumerate}
\end{assumption}

In the following, we present a general splitting algorithm for solving \eqref{eq:gen_prob_recall} under \cref{assume:blanket_multioperator}. For convenience in our forthcoming discussions, we introduce some notations. We let $1_n$ denote the vector $1_n = (1,1,\dots,1)^\top \in \Re^n$. We use $\otimes$ to denote the Kronecker product. In particular, $1_n\otimes x = (x,x,\dots,x)\in \H^n$ for $x\in \H$. Given a matrix $M = (M_{ij}) \in \mathbb{R}^{n\times m}$, we define $M\otimes \Id:\H^m \to \H^n$ by $((M\otimes \Id)\bz)_i = \sum_{j=1}^m M_{ij}z_j$ for all $\bz = (z_1,\dots,z_m)\in \H^m$ and all $i=1,\dots,n$. 

We first derive a general product space splitting scheme for
\eqref{eq:gen_prob_recall}, then specialize the convergence results
of \cref{sec:gen_subspace}, and finally compare the resulting
framework with existing multioperator splitting methods.

\subsection{Product space reformulation and algorithms}

Let $\bcalH = \H^n$ and $\bcalK = \H^p$. Throughout this section, we define the operators $\bA : \bcalH\toset \bcalH$ and $\bB : \bcalK \to \bcalK$ as
\begin{equation}\label{eq:AB_prodspace}
\bA\coloneqq \diag(A_1,\dots,A_n),\qquad \text{and} \qquad 
\bB\coloneqq \diag(B_1,\dots,B_p).
\end{equation}
That is, $\bA \bx = A_1 x_1\times A_2x_2\times \cdots\times  A_n x_n$ and $\bB \bs = (B_1s_1,\dots,B_ps_p)$ for all $\bx = (x_1,\dots,x_n)\in \bcalH$ and $\bs =(s_1,\dots,s_p) \in \bcalK$. 
We also fix $\bOmega\subseteq \bcalH$ to be the diagonal subspace
\begin{equation}
	\bOmega\coloneqq \{ 1_n \otimes x : x\in \H\} = \{(x,\dots,x):x\in\H\} .\notag 
\end{equation}
Note that
\begin{equation}\label{eq:bOmegaperp}
	\bOmega^\perp = \{ \bu = (u_1,\dots,u_n) : (1_n^\top \otimes \Id)\bu = \sum_{i=1}^n u_i = 0 \}.
\end{equation}
 Finally, we let $\bP\in \mathcal{B}(\bcalK,\bcalH)$ and $\bR \in \mathcal{B}(\bcalH,\bcalK)$ such that 
\begin{equation}\label{eq:PR}
	\bP^* (1_n\otimes x) = \bR (1_n\otimes x) = {1}_p\otimes x \qquad \forall x\in \H.  
\end{equation}
That is, $\bP^* (1_n \otimes \Id) = \bR (1_n \otimes \Id) = 1_p\otimes \Id$. Note that if $\bP=[P_{ij}]$ and $\bR=[R_{ji}]$ are the block
representations of $\bP$ and $\bR$, respectively, then
\eqref{eq:PR} is equivalent to
\[
\sum_{i=1}^n P_{ij}^*
=
\sum_{i=1}^n R_{ji}
=
\Id,
\qquad j=1,\ldots,p.
\]
With these, we have the following equivalent product space reformulation of \eqref{eq:gen_prob_recall}.
\begin{proposition}[Product space reformulation]
    \label{prop:prodspace_equivalent}
  	$\bx \in \zer \left(\bA + \bP \bB \bR + N_{\bOmega}\right)$ if and only if $\bx = 1_n\otimes x $ with $x\in \zer \left(  \sum_{i=1}^n A_i  + \sum_{j=1}^p B_j \right) $. That is, \eqref{eq:productspace_recall} is equivalent to \eqref{eq:gen_prob_recall}.
\end{proposition}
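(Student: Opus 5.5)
The plan is to unwind the definitions directly, using that $\bOmega$ is the diagonal and, since $\bb=0$ here, $\hatOmega=\bOmega$. Then $N_{\bOmega}(\bx)=\bOmega^\perp$ when $\bx\in\bOmega$ and is empty otherwise, with $\bOmega^\perp$ described by \eqref{eq:bOmegaperp}. In either direction the point has the form $\bx=1_n\otimes x$, and the whole argument reduces to relating an element of $\bA\bx+\bP\bB\bR\bx$ that lies in $-\bOmega^\perp$ to the sum $\sum_i A_ix+\sum_j B_jx$. The tool for this is the summation map $1_n^\top\otimes\Id=(1_n\otimes \Id)^*$, whose kernel is exactly $\bOmega^\perp$.

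First I simplify the forward term on the diagonal. By \eqref{eq:PR}, $\bR(1_n\otimes x)=1_p\otimes x$, so $\bB\bR\bx=(B_1x,\dots,B_px)$. I also need $(1_n^\top\otimes\Id)\bP=(1_n\otimes\Id)^*\bP=(\bP^*(1_n\otimes\Id))^*=(1_p\otimes \Id)^*=1_p^\top\otimes\Id$. Applying this, the sum of the components of $\bP\bB\bR\bx$ equals $\sum_{j=1}^p B_jx$.

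\emph{Forward direction.} Suppose $0\in\bA\bx+\bP\bB\bR\bx+N_{\bOmega}(\bx)$. Then $\bx\in\bOmega$, so $\bx=1_n\otimes x$, and there exist $y_i\in A_ix$ and $\bu\in\bOmega^\perp$ with $0=(y_1,\dots,y_n)+\bP\bB\bR\bx+\bu$. Summing components and using the identity above together with $\sum_i u_i=0$ gives $0=\sum_i y_i+\sum_j B_jx$, so $x$ solves \eqref{eq:gen_prob_recall}.

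\emph{Converse.} Suppose $0=\sum_i y_i+\sum_j B_jx$ with $y_i\in A_ix$. Set $\bx=1_n\otimes x$ and $\bu\coloneqq-(y_1,\dots,y_n)-\bP\bB\bR\bx$. Summing components, $\bu$ has component sum $-\sum_i y_i-\sum_jB_jx=0$, so $\bu\in\bOmega^\perp=N_{\bOmega}(\bx)$. This yields $0\in\bA\bx+\bP\bB\bR\bx+N_{\bOmega}(\bx)$.

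There is no genuine obstacle here. The only step needing care is the adjoint identity $(1_n^\top\otimes\Id)\bP=1_p^\top\otimes\Id$, which follows from \eqref{eq:PR} by taking adjoints, since $(1_n\otimes\Id)^*$ is precisely the summation map.
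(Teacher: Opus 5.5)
Your proposal is correct and follows essentially the same route as the paper: apply the summation map $1_n^\top\otimes\Id=(1_n\otimes\Id)^*$, whose kernel is $\bOmega^\perp$, together with the adjoint form of \eqref{eq:PR} to collapse $\bP\bB\bR(1_n\otimes x)$ to $\sum_{j=1}^p B_jx$. Your explicit construction of $\bu\in\bOmega^\perp$ in the converse is a welcome spelling-out of what the paper dismisses as ``reversing the arguments.''
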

\begin{proof}
Suppose that $\bx \in \zer \left(\bA + \bP \bB \bR + N_{\bOmega}\right)$. Then $\bx\in \bOmega$ so that $\bx = 1_n \otimes x$ for some $x\in \H$. In addition, there exists $\ba =(a_1,\dots,a_n) \in \bA \bx$ such that $\ba + \bP \bB (\bR \bx) \in N_{\bOmega}(\bx) = \bOmega^\perp$, where $a_i\in A_i x$. Thus,  by \eqref{eq:bOmegaperp}, we have 
\begin{align}
	0 &= (1_n^\top \otimes \Id)\,\ba + (1_n^\top \otimes \Id)\,\bP\bB\big(\bR(1_n\otimes x)\big) \notag\\
	&= \sum_{i=1}^n a_i + \big(\bP^*(1_n\otimes \Id)\big)^* \bB\big(\bR(1_n\otimes x)\big) \notag\\
	&= \sum_{i=1}^n a_i + (1_p^\top \otimes \Id)\,\bB(1_p\otimes x)
	&& \text{(by \eqref{eq:PR})} \notag \\
	&= \sum_{i=1}^n a_i + \sum_{j=1}^p B_j x
	&& \text{(by \eqref{eq:AB_prodspace})} \notag \label{eq:use_B_def}
\end{align}
Hence, $x$ is a zero of $\sum_{i=1}^n A_i  + \sum_{j=1}^p B_j $. The converse holds by reversing the arguments. 
\smartqedmark \end{proof}

Let $\bcalH'=\H^m$ for some $m\geq n-1$, and take
$\mathbf b= 0$ and $\bq=0$.
In view of \cref{prop:equivalence_MLauxiliary}, we choose
$\bM\in\mathcal B(\bcalH',\bcalH)$ and
$\bL\in\mathcal B(\bcalH)$ so that
\cref{assume:M_kernel_range,assume:blanket2} hold.
In the present consensus setting, these conditions reduce to the
following.
\begin{assumption}\label{assume:blanket2_components}
	The linear operators $\bM = [M_{ij}]$ and $\bL = [L_{ij}]$ with $M_{ij},L_{ij}\in \mathcal{B}(\H)$ satisfy $\ker(\bM^*) = \bOmega$, $\bM$ has closed range, and $\sum_{i=1}^n\sum_{j=1}^n L_{ij} = 0$.
\end{assumption}

We decompose $\bL$ as $\bL = \bD - \bN$ where 
\begin{equation}\label{eq:DN}
	\bD \coloneqq \diag (D_1, \dots, D_n) \qquad \text{and} \qquad \bN \coloneqq \bD - \bL,
\end{equation}
with $D_i \coloneqq  L_{ii}$, that is, $\bD$ is the diagonal part of $\bL$. If $\bD$ is invertible, then recalling \eqref{eq:Phi_gamma}, we have
\begin{align}
	\bx \in \Phi_{\gamma}(\bM \bz) \quad & \Longleftrightarrow \quad \bM \bz \in \gamma \bA \bx + \gamma \bP \bB (\bR \bx) + \bL \bx \notag \\
	& \Longleftrightarrow \quad \bM \bz + \bN \bx - \gamma \bP \bB (\bR \bx) \in (\gamma \bA + \bD )\bx \notag \\
	& \Longleftrightarrow \quad \bD^{-1} \bM \bz + \bD^{-1}\bN \bx - \gamma \bD^{-1}\bP \bB (\bR \bx) \in (\gamma \bD^{-1} \bA + \Id)\bx \notag \\
	& \Longleftrightarrow \quad \bx \in J_{\gamma \bD^{-1} \bA} \left( \bD^{-1} \bM \bz + \bD^{-1}\bN \bx - \gamma \bD^{-1}\bP \bB (\bR \bx) \right). \label{eq:Phigamma_J_Dinverse_A}
\end{align}
Hence, \cref{algo:abstract_preconditioned_forward} can be written in componentwise form as in \cref{algo:full_implicit}. In general, note that \cref{algo:full_implicit} is an \textit{implicit algorithm}. In the case that $D_i$, $N_{ij}$, $M_{ij}$, $P_{ij}$ and $R_{jl}$ are scalar operators, \cref{algo:full_implicit} is identical to the splitting algorithm proposed in \cite{DTT26}.

\begin{algorithm}[h]
	\caption{Generalized Forward-Backward Splitting}\label{algo:full_implicit}
	
	Let $\bM =[M_{ij}] \in \mathcal{B}(\H^m,\H^n)$, $\bN =[N_{ij}]\in \mathcal{B}(\H^n)$, $\bP =[P_{ij}]\in \mathcal{B}(\H^p,\H^n)$,
	$\bR =[R_{ij}]\in \mathcal{B}(\H^n,\H^p)$, and $\bD =\diag(D_1,\dots,D_n)$ with self-adjoint strongly monotone
	$D_i\in \mathcal{B}(\H)$ for $i=1,\dots,n$. Let $\gamma \in (0, +\infty)$ and
	$(\lambda_k)_{k\in \mathbb{N}}\subset [0, +\infty)$. Let $z^0_1, \dots, z^0_{m}\in \H$.
	For each $k\in \mathbb{N}$, compute
	\begin{align*}
		\begin{cases}
			x^k_i \in J_{\gamma D_i^{-1}A_i}\Bigg(D_i^{-1}\sum_{j=1}^m M_{ij}z^k_j
			+D_i^{-1}\sum_{j=1}^n N_{ij}x^k_j
			-\gamma D_i^{-1}\sum_{j=1}^p P_{ij}B_j\!\left(\sum_{l=1}^n R_{jl}x^k_l\right)\Bigg), \\
			\qquad  \quad (\text{for } i = 1,2,\dots,n), \\[2mm]
			z^{k+1}_i = z^k_i -\lambda_k\sum_{j=1}^n M_{ji}^* x^k_j,\qquad  (\text{for } i = 1,2,\dots,m).
		\end{cases}
	\end{align*}
\end{algorithm}

Specializing to the case where $\bL$ and $\bR$ are lower triangular operators\footnote{For a block operator $T=[T_{ij}]$, possibly rectangular, we call
	$T$ lower triangular if $T_{ij}=0$ whenever $i<j$, and strictly
	lower triangular if $T_{ij}=0$ whenever $i\leq j$.} and $\bP$ is strictly lower triangular yields the \textit{explicit algorithm} in \cref{algo:full}.

\begin{algorithm}
	\caption{Generalized Explicit Forward-Backward Splitting}\label{algo:full}
	
	Let $\bM =[M_{ij}] \in \mathcal{B}(\H^m,\H^n)$, let $\bN =[N_{ij}]\in \mathcal{B}(\H^n)$ and $\bP =[P_{ij}]\in \mathcal{B}(\H^p, \H^n)$ be strictly lower triangular,
	$\bR =[R_{ij}]\in \mathcal{B}(\H^n,\H^p)$ be lower triangular, and $\bD =\diag(D_1,\dots,D_n)$ with self-adjoint strongly monotone
	$D_i\in \mathcal{B}(\H)$ for $i=1,\dots,n$. Let $\gamma \in (0, +\infty)$ and
	$(\lambda_k)_{k\in \mathbb{N}}\subset [0, +\infty)$. Let $z^0_1, \dots, z^0_{m}\in \H$.
	For each $k\in \mathbb{N}$, compute
	\begin{align*}
		\begin{cases}
			x^k_1 &\in J_{\gamma D_1^{-1}A_1}\Bigg(D_1^{-1}\sum_{j=1}^m M_{1j}z^k_j\Bigg), \\
			x^k_i &\in J_{\gamma D_i^{-1}A_i}\Bigg(D_i^{-1}\sum_{j=1}^m M_{ij}z^k_j +D_i^{-1}\sum_{j=1}^{i-1} N_{ij}x^k_j \\
			& \qquad \qquad \qquad \qquad -\gamma D_i^{-1}\sum_{j=1}^{\min\{i-1, p\}} P_{ij}B_j\left(\sum_{l=1}^j R_{jl}x^k_l\right)\Bigg),\\ 
			&( i = 2,3,\dots,n), \\
			z^{k+1}_i\!\!\!\!\! &=z^k_i -\lambda_k\sum_{j=1}^n M_{ji}^* x^k_j,\qquad (\text{for } i = 1,2,\dots,m).
		\end{cases}
	\end{align*}
\end{algorithm}

\subsection{Convergence theorem}

We first verify that the product space operators
$\bA,\bB,\bP$, and $\bR$ satisfy
\cref{assume:blanket}(i)--(iii). To this end, define
\begin{equation*}\label{eq:UVW}
	\bU \coloneqq \diag(U_1,\dots,U_n), \quad \bV \coloneqq \diag(V_1,\dots,V_n), \quad \text{and} \quad \bW \coloneqq \diag(W_1,\dots,W_p),
\end{equation*}
where $(U_i,V_i)$ and $W_j$ are the moduli of semimonotonicity and cocoercivity in \cref{assume:blanket_multioperator}. With these, it is straightforward to verify that \cref{assume:blanket}(i)--(iii) hold. We state it formally as follows. Note that the stronger conclusion that $\bA$ is \textit{maximally} $(\bU,\bV)$-semimonotone is not required in \cref{assume:blanket}(i). Nevertheless, we prove that maximality holds in \cref{app:AssumptionA_holds}.

\begin{proposition}\label{prop:assume_blanket_gensubspace_holds}
	Under \cref{assume:blanket_multioperator}, the following hold:
	\begin{enumerate}
		\item $\bA$ is maximally $(\bU,\bV)$-semimonotone.
		\item $\bB$ is $\bW^{-1}$-cocoercive.
		\item $\bOmega \subseteq \ker(\bP^*-\bR)$.
	\end{enumerate}
\end{proposition}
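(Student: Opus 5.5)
Parts (ii) and (iii) are direct computations, while part (i) needs a maximality argument; we treat them in increasing order of difficulty.

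For (ii), take $\bs=(s_1,\dots,s_p)$ and $\bs'=(s_1',\dots,s_p')$ in $\bcalK$. The inner product splits over components, and so does $\norm{\cdot}_{\bW^{-1}}^2$, because $\bW^{-1}=\diag(W_1^{-1},\dots,W_p^{-1})$ is self-adjoint and strongly monotone (each $W_j$ is). Summing the cocoercivity inequalities of the $B_j$ gives
\[
\inner{\bs-\bs'}{\bB\bs-\bB\bs'}\ \ge\ \norm{\bB\bs-\bB\bs'}_{\bW^{-1}}^2 .
\]

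For (iii), let $\bx\in\bOmega$, so $\bx=1_n\otimes x$. Then \eqref{eq:PR} gives $\bP^*\bx=1_p\otimes x=\bR\bx$, hence $(\bP^*-\bR)\bx=0$.

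For (i), semimonotonicity follows the same way. For $(\bx,\by),(\bx',\by')\in\gra\bA$ we have $(x_i,y_i),(x_i',y_i')\in\gra A_i$. Adding the componentwise inequalities and using that $\bU,\bV$ are block diagonal gives the $(\bU,\bV)$-semimonotonicity inequality.

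The main obstacle is maximality. The planned route goes through the following reduction: suppose $\widetilde{\bA}$ is $(\bU,\bV)$-semimonotone with $\gra\bA\subseteq\gra\widetilde{\bA}$, and take $(\bx,\by)\in\gra\widetilde{\bA}$. The aim is to show $(x_i,y_i)\in\gra A_i$ for every $i$.

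Fix $i$ and an arbitrary $(u,v)\in\gra A_i$. For each $j\neq i$, choose a point $(x_j',y_j')\in\gra A_j$; this requires $\gra A_j\neq\varnothing$, which I would either assume or note holds under nontriviality. Set $x_i'=u$, $y_i'=v$. Then $(\bx',\by')\in\gra\bA\subseteq\gra\widetilde{\bA}$, so the semimonotonicity of $\widetilde{\bA}$ applies to the pair $(\bx,\by),(\bx',\by')$. However, that inequality involves all components together, so it does not immediately isolate the $i$-th one.

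To isolate component $i$, I would instead construct the operator $\widehat A_i$ whose graph is $\gra A_i\cup\{(x_i,y_i)\}$. Checking that $\widehat A_i$ is $(U_i,V_i)$-semimonotone needs, for each $(u,v)\in\gra A_i$, the inequality
\[
\inner{x_i-u}{y_i-v}\ \ge\ \inner{x_i-u}{U_i(x_i-u)}+\inner{y_i-v}{V_i(y_i-v)} .
\]
This follows from the $\widetilde{\bA}$ inequality if we may take $(x_j',y_j')=(x_j,y_j)$ for $j\neq i$, which would make the other components vanish. That choice requires $(x_j,y_j)\in\gra A_j$, which is exactly what we are trying to prove.

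The fix I would try first is a two-step argument on the product structure.
\begin{enumerate}
\item First, consider the restricted operator $\widetilde{\bA}\cap(\gra A_1\times\cdots)$ in the other coordinates. Alternatively, use the characterization that each maximal $(U_i,V_i)$-semimonotone $A_i$ corresponds, via a linear change of variables on $\gra$, to a maximally monotone operator; this is the standard resolvent-type transformation from \cite{ELP25}, and I would invoke it from the appendix.
\item Second, under that transformation, apply it blockwise. Then $\bA$ corresponds to $\diag$ of maximally monotone operators, which is maximally monotone (direct product, \cite[Prop.~20.23]{BC17}), and transforming back yields maximality of $\bA$.
\end{enumerate}

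The delicate point is verifying that the graph transformation is blockwise, i.e., built from $\bU,\bV$, so that it commutes with the product. This holds because $\bU$ and $\bV$ are block diagonal.
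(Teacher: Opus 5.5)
Your arguments for (ii), (iii) and for the semimonotonicity inequality in (i) are fine. The maximality argument, however, has a genuine gap. The route you settle on transports each $A_i$ to a maximally monotone operator via the linear change of variables of \cref{prop:equivalence_semimonotone}. That result requires $U_iV_i=\theta_i\Id$ with $\theta_i<1/4$, which is \cref{assume:semimonotone} and is \emph{not} part of \cref{assume:blanket_multioperator}. The proposition is claimed for arbitrary self-adjoint $U_i,V_i$, and outside that regime no such transformation is available. Sometimes none can exist: for scalar moduli with $\mu_i\nu_i\geq 1/4$, the form $\inner{x}{y}-\mu_i\norm{x}^2-\nu_i\norm{y}^2$ is semidefinite, so no linear bijection of $\H\times\H$ can turn it into a multiple of the indefinite form $\inner{s}{t}$.

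Even under \cref{assume:semimonotone}, one point is left unaddressed. Write $\phi_i(x_i,y_i;x_i',y_i')\coloneqq\inner{x_i-x_i'}{y_i-y_i'}-\inner{x_i-x_i'}{U_i(x_i-x_i')}-\inner{y_i-y_i'}{V_i(y_i-y_i')}$. The blockwise transforms give $\inner{\Delta s_i}{\Delta t_i}=(1+2\varrho_i)\phi_i$ with block-dependent factors. Hence monotonicity of $\diag(\hat A_1,\dots,\hat A_n)$ encodes $\sum_i(1+2\varrho_i)\phi_i\geq0$, not the $(\bU,\bV)$-inequality $\sum_i\phi_i\geq0$. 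You would have to rescale each block's output by $(1+2\varrho_i)^{-1}$. Nor can \cref{prop:equivalence_semimonotone} be applied to $\bA$ itself, since $\bU\bV=\diag(\theta_1\Id,\dots,\theta_n\Id)$ need not be scalar. The other option in your step~1 (the ``restricted operator'') is not a concrete argument.

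The direct approach you abandoned does work, once you handle all indices simultaneously instead of isolating one; this is the paper's proof. Let $(\bx,\by)$ satisfy the $(\bU,\bV)$-inequality against every point of $\gra\bA$, exactly as in your reduction. Let $J$ be the set of indices $j$ for which some $(x_j',y_j')\in\gra A_j$ gives $\phi_j(x_j,y_j;x_j',y_j')<0$. For $i\notin J$, the point $(x_i,y_i)$ satisfies the $(U_i,V_i)$-inequality against all of $\gra A_i$. Adjoining it would therefore give a semimonotone extension (the self-pairing yields $0\geq0$), so maximality of $A_i$ forces $(x_i,y_i)\in\gra A_i$. If $J\neq\varnothing$, test against the point of $\gra\bA$ whose $i$th block is $(x_i,y_i)$ for $i\notin J$ and a violating witness for $j\in J$. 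The total is $\sum_{j\in J}\phi_j<0$, a contradiction. Hence $J=\varnothing$ and $(x_i,y_i)\in\gra A_i$ for every $i$.

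This uses only the block-diagonality of $\bU,\bV$ and works for arbitrary self-adjoint moduli. It also makes your concern about $\gra A_j\neq\varnothing$ moot: witnesses exist by definition of $J$, and a maximal operator never has empty graph anyway, since every singleton graph is semimonotone.
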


\begin{theorem}[Convergence of \cref{algo:full_implicit}]
	\label{thm:convergence_implicitalgorithm}
	Suppose that
	\cref{assume:L_s,assume:semimonotone_general,assume:blanket_multioperator,assume:blanket2_components}
	hold. Let $\gamma\in\Lambda\cap\Gamma{}$, and let
	$\beta_\gamma>0$ be as in \cref{tab:notation}. Let
	$\{(\bx^k,\bz^k)\}$ be generated by \cref{algo:full_implicit}.
	If $\{\lambda_k\}\subset[0,2\beta_\gamma]$ satisfies
	$\sum_{k=0}^\infty
	\lambda_k(2\beta_\gamma-\lambda_k)=+\infty$, then the following hold:
	\begin{enumerate}
		\item (Weak convergence of the main sequence). The sequence $\{\bz^k\}$ is bounded and converges weakly to
		some point $\bz^*\in\zer(\Psi_\gamma)$.
		
		\item (Weak convergence of the shadow sequence). If $A_i$ is maximally $V_i$-comonotone and $U_i=0$
		for all $i\in\{1,\dots,n\}$, and $\{\bx^k\}$ is bounded, then
		there exists a solution $x^*$ of \eqref{eq:gen_prob_recall} such
		that $x_i^k\toweak x^*$ for all $i\in\{1,\dots,n\}$. Moreover, the boundedness of $\{\bx^k\}$ is automatic if the
		first rows of $\bN$ and $\bP$ are zero and
		$J_{\gamma D_1^{-1}A_1}$ is Lipschitz continuous.
		
		\item (Strong convergence of the shadow sequence). If $\bU$ is strongly monotone on $\bOmega$, then there
		exists a solution $x^*$ of \eqref{eq:gen_prob_recall} such that
		$x_i^k\to x^*$ for all $i\in\{1,\dots,n\}$.
	\end{enumerate}
\end{theorem}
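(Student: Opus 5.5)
The plan is to obtain the result as a direct specialization of \cref{thm:mainresult}. First I show that, with $\bb=0$, $\bq=0$ and the operators $\bA,\bB$ from \eqref{eq:AB_prodspace}, all standing assumptions of \cref{sec:gen_subspace} hold. Then I check that \cref{algo:full_implicit} is exactly \cref{algo:abstract_preconditioned_forward} written componentwise. Finally, I translate each conclusion of \cref{thm:mainresult} back to \eqref{eq:gen_prob_recall}.

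\emph{Checking the assumptions.} For \cref{assume:blanket}, items (i)--(iii) are given by \cref{prop:assume_blanket_gensubspace_holds}. Item (iv) follows from \cref{assume:blanket2_components}: $\bM$ has closed range and $\ker(\bM^*)=\bOmega$, so $\ran(\bM)=\bOmega^\perp$. For \cref{assume:blanket2}, take $\bar{\bx}=0\in\hatOmega=\bOmega$, which makes $\bL\bar{\bx}-\bq=0$ trivially. To get $\bL(\bOmega)\subseteq\bOmega^\perp$, let $\bx=1_n\otimes x$. By \eqref{eq:bOmegaperp} it suffices that
\[
(1_n^\top\otimes\Id)\bL(1_n\otimes x)=\sum_{i,j}L_{ij}x=0,
\]
and this is exactly the condition $\sum_{i,j}L_{ij}=0$. \Cref{assume:L_s} and \cref{assume:semimonotone_general} are hypotheses of the theorem.

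\emph{Identifying the iteration.} Each $D_i$ is self-adjoint and strongly monotone, so $\bD$ is invertible. The chain of equivalences \eqref{eq:Phigamma_J_Dinverse_A} then shows that $\bx^k\in\Phi_\gamma(\bM\bz^k)$ is precisely the componentwise resolvent step of \cref{algo:full_implicit}. The $\bz$-update coincides with $\bz^{k+1}=\bz^k-\lambda_k\bM^*\bx^k$, since $(\bM^*\bx)_i=\sum_j M_{ji}^*x_j$. Hence \cref{thm:mainresult}(i) gives item (i) directly, including $\bM^*\bx^k\to0$.

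\emph{Shadow sequence, item (ii).} If each $A_i$ is maximally $V_i$-comonotone and $U_i=0$, then $\bA=\diag(A_i)$ is maximally $\bV$-comonotone; this is part (i) of \cref{prop:assume_blanket_gensubspace_holds} with $\bU=0$, proved in the appendix. \Cref{thm:mainresult}(ii) then gives a solution $\bx^*$ of \eqref{eq:productspace_recall} with $\bx^k\toweak\bx^*$. By \cref{prop:prodspace_equivalent}, $\bx^*=1_n\otimes x^*$ with $x^*$ solving \eqref{eq:gen_prob_recall}, so $x_i^k\toweak x^*$ for every $i$.

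\emph{Automatic boundedness.} Suppose the first rows of $\bN$ and $\bP$ vanish. Then
\[
x_1^k=J_{\gamma D_1^{-1}A_1}\Bigl(D_1^{-1}\sum_j M_{1j}z_j^k\Bigr),
\]
and Lipschitz continuity of this resolvent together with boundedness of $\{\bz^k\}$ from (i) gives boundedness of $\{x_1^k\}$. Because $\bM$ has closed range and $\ker\bM^*=\bOmega$, $\bM^*$ is bounded below on $\bOmega^\perp$, so
\[
\norm{\Pi_{\bOmega^\perp}\bx^k}\le c\norm{\bM^*\bx^k}\to0 .
\]
The differences $x_i^k-x_1^k$ are differences of components of $\Pi_{\bOmega^\perp}\bx^k$, because the $\bOmega$-component is constant across blocks. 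They are therefore bounded, and so is every $\{x_i^k\}$.

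\emph{Strong convergence, item (iii).} This follows identically from \cref{thm:mainresult}(iii) together with \cref{prop:prodspace_equivalent}.

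The only step needing care is this boundedness argument, in particular the lower bound for $\bM^*$ on $\bOmega^\perp$. It is handled by the open mapping theorem applied to the closed-range operator $\bM^*$.
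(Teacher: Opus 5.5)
Your proposal is correct and follows essentially the same route as the paper. You verify \cref{assume:blanket} and \cref{assume:blanket2} via \cref{prop:assume_blanket_gensubspace_holds} and \cref{assume:blanket2_components} (with $\bb=0$, $\bq=0$), identify \cref{algo:full_implicit} with \cref{algo:abstract_preconditioned_forward} through \eqref{eq:Phigamma_J_Dinverse_A}, transfer the conclusions of \cref{thm:mainresult} via \cref{prop:prodspace_equivalent}, and obtain boundedness from $x_1^k$ being a Lipschitz image of the bounded $\{\bz^k\}$ together with $\Pi_{\bOmega^\perp}\bx^k\to 0$.

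Your open-mapping bound $\norm{\Pi_{\bOmega^\perp}\bx^k}\le c\norm{\bM^*\bx^k}$ is an equivalent replacement for the Moore--Penrose identity used in \cref{lemma:Pi_omegaperp_xk}. Your explicit checks, that $\sum_{i,j}L_{ij}=0$ yields $\bL(\bOmega)\subseteq\bOmega^\perp$ and that $\bA$ is maximally $\bV$-comonotone when all $U_i=0$, spell out steps the paper leaves implicit.
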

\begin{proof}
	By \cref{prop:assume_blanket_gensubspace_holds},
	\cref{assume:blanket}(i)--(iii) hold under
	\cref{assume:blanket_multioperator}. Moreover,
	\cref{assume:blanket2_components} guarantees
	\cref{assume:blanket}(iv). Recalling that $\bb=0$ and $\bq=0$ in the
	present setting, \cref{assume:blanket2_components} also guarantees
	\cref{assume:blanket2}. Hence, assertion~(i) follows from \cref{thm:mainresult}, while
	assertions~(ii)--(iii) follow from
	\cref{thm:mainresult} and \cref{prop:prodspace_equivalent}.
	
	It remains to verify the additional claim in (ii). By
	\cref{thm:mainresult}(i), $\bM^*\bx^k\to\mathbf{0}$.
	Since $\ran(\bM)=\bOmega^\perp$, it follows that
	$\Pi_{\bOmega^\perp}\bx^k\to\mathbf{0}$
	(cf.\ \cref{lemma:Pi_omegaperp_xk}), and hence
	$x_i^k-x_1^k\to0$ for every $i=2,\dots,n$.
	If the first rows of $\bN$ and $\bP$ are zero, then
	$
	x_1^k
	=
	J_{\gamma D_1^{-1}A_1}
	\bigl(D_1^{-1}(\bM\bz^k)_1\bigr).
	$
	Since $\{\bz^k\}$ is bounded by (i) and
	$J_{\gamma D_1^{-1}A_1}$ is Lipschitz continuous,
	$\{x_1^k\}$ is bounded. Consequently, since
	$x_i^k-x_1^k\to0$ for every $i=2,\dots,n$,
	$\{\bx^k\}$ is bounded.
\smartqedmark \end{proof}
The preceding theorem provides convergence guarantees for the
general implicit \cref{algo:full_implicit}, provided that
$\gamma\in\Lambda\cap\Gamma{}$. While $\Gamma{}$ can be characterized
or estimated using \cref{tab:Gammastar}, verifying the condition
$\gamma\in\Lambda$ may not be straightforward in the general implicit setting.
We therefore focus in the remainder of this subsection on the
explicit \cref{algo:full}. Its triangular structure allows membership
in $\Lambda$ to be verified through the individual resolvents
$J_{\gamma D_i^{-1}A_i}$. Under the following assumption, we obtain
explicit nonempty subsets of $\Lambda$; the proofs are deferred to
\cref{app:prop_Lambda_nonempty,app:prop_Lambda_nonempty_special}.

\begin{assumption}\label{assume:semimonotone}
	For $i=1,\dots,n$, the semimonotonicity modulus $(U_i,V_i)$ in \cref{assume:blanket_multioperator}(i) satisfies $U_iV_i = \theta_i \Id$ with $\theta_i<1/4$. 
\end{assumption}

\begin{lemma}[Estimate of $\Lambda$]\label{prop:Lambda_nonempty}
	Suppose that \cref{assume:blanket_multioperator}(i) and \cref{assume:semimonotone} hold, and let $\bD$ and $\bN$ be given by \eqref{eq:DN}, where $D_i$ is self-adjoint and $\delta_i$-strongly monotone for all $i\in \{ 1,\dots, n\}$, and  $\bN$ 
	is strictly lower triangular. Assume further that $\bP$ is strictly lower triangular and $\bR$ is lower triangular. For each $i\in \{1,\dots,n\}$, denote $
		\hat{U}_i \coloneqq \frac{1+2\varrho_i}{1+\varrho_i}U_i $ and $\hat{V}_i \coloneqq (1+2\varrho_i)V_i,$
	where 
	$\varrho_i \coloneqq  -\frac{1}{2}+\frac{1}{2\sqrt{1-4\theta_i}}$. For every $i$ such that $\hat V_i\not\succeq0$, assume that
\begin{equation}
	\label{eq:C(U,V)}
	\begin{cases}
		1+\|\hat U_i\|\lambda_{\min}(\hat V_i)>0,
		& \text{if }\hat U_i\succeq0,\\[1mm]
		-\lambda_{\min}(\hat V_i)
		\bigl(
		-\|D_i\|\lambda_{\min}(\hat U_i)
		+\delta_i\|\hat U_i\|
		\bigr)<\delta_i,
		& \text{if }\hat U_i\not\succeq0.
	\end{cases}
\end{equation} Let\footnote{We use the conventions
		$\max\varnothing=0$ and $\min\varnothing=+\infty$.}	
	\begin{equation}\label{eq:sigma_varsigma}
		\sigma \coloneqq \max \left\lbrace -\frac{\norm{D_i}\,\lambda_{\min}(\hat V_i)}
		{1+\norm{\hat U_i}\lambda_{\min}(\hat V_i)} : \hat{V}_i \not\succeq 0\right\rbrace ~\text{and} ~ \varsigma \coloneqq \min \left\lbrace -\frac{\delta_i}{\lambda_{\min}(\hat U_i)} : \hat{U}_i \not\succeq 0 \right\rbrace,
	\end{equation}
	and suppose that $\sigma < \varsigma$. Then for any $\gamma \in  (\sigma,\varsigma)$, the resolvent $J_{\gamma\bD^{-1}\bA}$ is single-valued, has full domain, and is Lipschitz continuous. Hence, $\varnothing \neq (\sigma,\varsigma) \subseteq \Lambda$ where $\Lambda$ is given by \eqref{eq:Lambda}. 
\end{lemma}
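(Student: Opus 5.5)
The plan is to reduce everything to the individual preconditioned resolvents $J_{\gamma D_i^{-1}A_i}$ and then exploit the triangular structure. Since $\bD$ and $\bA$ are block diagonal, $J_{\gamma\bD^{-1}\bA}=\diag(J_{\gamma D_1^{-1}A_1},\dots,J_{\gamma D_n^{-1}A_n})$. It therefore suffices to show the following: for every $i$ and every $\gamma\in(\sigma,\varsigma)$, the map $J_{\gamma D_i^{-1}A_i}$ is single-valued, has full domain, and is Lipschitz.

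Given this, membership in $\Lambda$ follows from \eqref{eq:Phigamma_J_Dinverse_A}. Because $\bN$ and $\bP$ are strictly lower triangular and $\bR$ is lower triangular, the inclusion $\bx\in\Phi_\gamma(\bw)$ can be solved by forward substitution, as in \cref{algo:full}. At step $i$, the argument of $J_{\gamma D_i^{-1}A_i}$ depends only on $\bw$ and on the already-determined $x_1,\dots,x_{i-1}$. Note that $B_j$ enters only through $R_{jl}x_l$ with $l\le j\le i-1$. Hence each $x_i$ exists and is unique, so $\Phi_\gamma$ is single-valued on all of $\bcalH\supseteq\bOmega^\perp$ and $\gamma\in\Lambda$.

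For a single index $i$, I would first rescale the modulus. The inequality from $(U_i,V_i)$-semimonotonicity, combined with the condition $U_iV_i=\theta_i\Id$, should allow one to pass to the modulus $(\hat U_i,\hat V_i)$. I plan to use the identity, proved via Young's inequality in the form of \cref{lemma:Young} with weight $\varrho_i$,
\[
\langle x,y\rangle\ \ge\ \langle x,\hat U_i x\rangle+\langle y,\hat V_i y\rangle .
\]
Here $\varrho_i$ is chosen as the root of $\varrho(1+\varrho)=\theta_i/(1-4\theta_i)$, which is what produces $\varrho_i=-\tfrac12+\tfrac1{2\sqrt{1-4\theta_i}}$. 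I expect this algebra to be the most delicate computational point. I would verify it by checking that the quadratic form in $(x-x',y-y')$ is positive semidefinite, using \cref{lemma:schurcomplementlemma}, since $U_i$ and $V_i$ commute because their product is scalar.

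Next, set $T\coloneqq \gamma D_i^{-1}A_i$ and work in the inner product $\langle\cdot,\cdot\rangle_{D_i}$. For $(x,u),(x',u')\in\gra T$, we have $D_i(u-u')/\gamma\in A_ix-A_ix'$. This gives
\[
\tfrac1\gamma\langle x-x',D_i(u-u')\rangle\ \ge\ \langle x-x',\hat U_i(x-x')\rangle+\tfrac1{\gamma^2}\langle D_i(u-u'),\hat V_iD_i(u-u')\rangle .
\]
From here I would show that $\Id+T$ is strongly monotone in a suitable sense, i.e. $\|(x+u)-(x'+u')\|\ge c\|x-x'\|$ with $c>0$. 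The cases are as follows:
\begin{itemize}
\item When $\hat U_i\not\succeq0$, bound $\langle x,\hat U_ix\rangle\ge\lambda_{\min}(\hat U_i)\|x\|^2$. The requirement $\gamma<\varsigma$, i.e. $\delta_i+\gamma\lambda_{\min}(\hat U_i)>0$, yields the needed monotonicity of $D_i+\gamma\hat U_i$.
\item When $\hat V_i\not\succeq0$, the comonotone defect is bounded by $-\lambda_{\min}(\hat V_i)\|D_i\|\,\|u-u'\|^2/\gamma$. This must be absorbed, and that is exactly where $\gamma>\sigma$ and \eqref{eq:C(U,V)} enter; the $\|\hat U_i\|$ terms arise from estimating $\|u-u'\|$ against $\|x-x'\|$.
\end{itemize}
I expect this case analysis, with its sharp constants, to be the main obstacle. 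My first attempt would be to write $\langle x-x',(\Id+T)x-(\Id+T)x'\rangle_{D_i}$ and lower-bound it by a positive multiple of $\|(\Id+T)x-(\Id+T)x'\|^2$, i.e. show $J_T$ is cocoercive and hence Lipschitz.

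Full domain would then come from maximality. Maximal $(U_i,V_i)$-semimonotonicity of $A_i$ means that an appropriate shifted operator, of the form $T-\alpha\Id$ or its comonotone analogue after an affine change of variables, is maximally monotone in $\langle\cdot,\cdot\rangle_{D_i}$ by \cref{lemma:maximalmonotone_properties}(v). Minty's theorem (\cref{lemma:maximalmonotone_properties}(i)) then gives $\ran(\Id+T)=\H$.

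Finally, $\sigma<\varsigma$ guarantees that $(\sigma,\varsigma)$ is nonempty, which completes the argument.
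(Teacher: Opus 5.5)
Your outer reduction is fine and matches the paper. $J_{\gamma\bD^{-1}\bA}$ is block diagonal, and the triangular structure of $\bN,\bP,\bR$ lets one solve $\bx\in\Phi_\gamma(\bw)$ by forward substitution. So $(\sigma,\varsigma)\subseteq\Lambda$ follows once each $J_{\gamma D_i^{-1}A_i}$ is single-valued, everywhere defined and Lipschitz.

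The gap is in that single-index step. You intend to establish $\inner{\Delta x}{\Delta y}\ge\inner{\Delta x}{\hat U_i\Delta x}+\inner{\Delta y}{\hat V_i\Delta y}$ on $\gra A_i$, but this is false in general: $(U_i,V_i)$-semimonotonicity does not imply $(\hat U_i,\hat V_i)$-semimonotonicity. Take $U_i=V_i=\tfrac25\Id$, so $\theta_i=\tfrac4{25}$, $\varrho_i=\tfrac13$, $\hat U_i=\tfrac12\Id$ and $\hat V_i=\tfrac23\Id$. Then $A_i=\Id$ is maximally $(\tfrac25\Id,\tfrac25\Id)$-semimonotone, since $1\ge\tfrac45$ and $(A_i-\hat U_i)^{-1}-\hat V_i=\tfrac43\Id$. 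Yet $\norm{\Delta x}^2\ge\tfrac76\norm{\Delta x}^2$ fails. The same happens for $\theta_i<0$, e.g.\ $U_i=\Id$, $V_i=-\Id$, $A_i=-2\Id$. So no Young or Schur-complement computation can prove your claim, and every later estimate that treats $(\hat U_i,\hat V_i)$ as a modulus of $A_i$ has no foundation.

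Your fallback of showing that $J_T$ is cocoercive also fails when $\hat V_i\not\succeq0$. Take $A_i=-\tfrac32\Id$ (maximally $-\tfrac23\Id$-comonotone), $D_i=\Id$ and $\gamma=1\in(\sigma,\varsigma)=(\tfrac23,+\infty)$. Then $J_{\gamma D_i^{-1}A_i}=-2\Id$, which is Lipschitz but not even monotone.

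Finally, the full-domain step is only gestured at. With non-scalar $D_i$ and both $U_i,V_i\neq0$, a plain shift $T-\alpha\Id$ or a shifted inverse does not in general give a monotone operator. So Minty's theorem cannot be invoked the way you describe.

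The missing idea is that $(\hat U_i,\hat V_i)$ encode a change of variables on the graph, not a new modulus. The choice of $\varrho_i$ gives $\hat U_i\hat V_i=\varrho_i\Id$. Then $(x,y)\mapsto(y-\hat U_ix,(1+\varrho_i)x-\hat V_iy)$ is a bijection of $\H\times\H$ mapping $\gra A_i$ onto $\gra\hat A_i$, where $\hat A_i\coloneqq(A_i-\hat U_i)^{-1}-\hat V_i$. It converts the $(U_i,V_i)$-inequality exactly into the monotonicity inequality, maximality included (\cref{prop:equivalence_semimonotone}).

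Now set $R_\gamma\coloneqq\gamma\hat U_i+D_i$, $S_\gamma\coloneqq\gamma R_\gamma^{-1}+\hat V_i$ and $z\coloneqq\frac1\gamma(D_iw-R_\gamma y)$. The inclusion $D_iw\in\gamma A_iy+D_iy$ is then equivalent to $R_\gamma^{-1}D_iw\in(\hat A_i+S_\gamma)z$ together with $y=R_\gamma^{-1}(D_iw-\gamma z)$. If $R_\gamma$ and $S_\gamma$ are strongly monotone, apply Minty's theorem to $S_\gamma^{-1}\hat A_i$ in the $S_\gamma$-inner product. This yields a unique $z$ depending Lipschitz-continuously on $w$, hence the same for $y$ (\cref{prop:resolvent_singlevalued_fulldomain}).

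The interval $(\sigma,\varsigma)$ is precisely an elementary sufficient condition for this strong monotonicity:
\begin{itemize}
\item The bound $R_\gamma\succeq(\delta_i+\gamma\lambda_{\min}(\hat U_i))\Id$ gives the requirement $\gamma<\varsigma$.
\item The bound $R_\gamma\preceq(\gamma\norm{\hat U_i}+\norm{D_i})\Id$ gives $S_\gamma\succeq\bigl(\frac{\gamma}{\gamma\norm{\hat U_i}+\norm{D_i}}+\lambda_{\min}(\hat V_i)\bigr)\Id$, hence the requirement $\gamma>\sigma$.
\item Condition \eqref{eq:C(U,V)} guarantees $1+\norm{\hat U_i}\lambda_{\min}(\hat V_i)>0$, so that this lower bound is meaningful.
\end{itemize}
This is where $\norm{\hat U_i}$ and $\norm{D_i}$ genuinely enter, and your sketch does not reproduce it.
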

We now specialize to the case where each $D_i$ and all the $U_i$'s and $V_i$'s are scalar operators. In this regime, one can obtain a more refined sufficient interval
$(\sigma,\varsigma)\subseteq\Lambda$; see \cref{remark:scalar_UVD}.

\begin{lemma}[Estimate of $\Lambda$ for scalar $U_i,V_i,D_i$]\label{prop:Lambda_nonempty_specialcase}
	Suppose that \cref{assume:blanket_multioperator}(i) and \cref{assume:semimonotone} hold, and let $\bD = \diag(D_1,\dots,D_n)$, where $D_i=\delta_i \Id$  with $\delta_i>0$ for all $i\in \{ 1,\dots, n\}$. Assume further that $\bN$ and $\bP$
	are strictly lower triangular and that $\bR$ is lower triangular. Suppose that $(U_i,V_i)=(\mu_i\Id,\nu_i\Id)$ for all $i\in \{1,\dots,n\}$, and let $(\sigma,\varsigma)$ be given by 
	\begin{align}
		\sigma
		&\coloneqq
		\max\left\{
		\frac{-2\delta_j\nu_j}
		{1+\sqrt{1-4\mu_j\nu_j}}
		:\nu_j<0
		\right\},
		\notag\\
		\varsigma
		&\coloneqq
		\min\left\{
		\frac{-\delta_i(1+\sqrt{1-4\mu_i\nu_i})}
		{2\mu_i}
		:\mu_i<0
		\right\}.
		\label{eq:sigma_varsigma_specialcase}
	\end{align}
		and suppose that $\sigma<\varsigma$. Then for any $\gamma \in  (\sigma,\varsigma)$, the resolvent $J_{\gamma\bD^{-1}\bA}$ is single-valued, has full domain, and is Lipschitz continuous. Hence, $\varnothing \neq (\sigma,\varsigma) \subseteq \Lambda$ where $\Lambda$ is given by \eqref{eq:Lambda}. 	
\end{lemma}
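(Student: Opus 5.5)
The plan is to reduce everything to a scalar statement about each resolvent $J_{\gamma\delta_i^{-1}A_i}$ and then exploit the triangular structure. Since $\bD^{-1}\bA=\diag(\delta_1^{-1}A_1,\dots,\delta_n^{-1}A_n)$, we have $J_{\gamma\bD^{-1}\bA}=\diag\big(J_{(\gamma/\delta_1)A_1},\dots,J_{(\gamma/\delta_n)A_n}\big)$. It therefore suffices to show that, for each $i$ and each $\gamma\in(\sigma,\varsigma)$, the resolvent $J_{tA_i}$ with $t=\gamma/\delta_i$ is single-valued, has full domain, and is Lipschitz.

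\textbf{Step 1 (scalar resolvent).} For a maximally $(\mu\Id,\nu\Id)$-semimonotone $A$ with $\mu\nu<1/4$, I would first look for an affine change of variables that turns $A$ into a maximally monotone operator. The natural choice is $T\coloneqq (A-a\Id)^{-1}-b\Id$, with $a,b$ chosen so that $T$ is monotone. Expanding $\langle x-x',y-y'\rangle\ge\mu\|x-x'\|^2+\nu\|y-y'\|^2$ in terms of $u=y-ax$ (so that $x\in T^{-1}$-type coordinates) gives a quadratic condition: $a,b$ must solve $\nu a^2+a+\mu\ldots=0$. Its discriminant is $1-4\mu\nu>0$, and this is where the roots $\frac{-2\nu}{1+\sqrt{1-4\mu\nu}}$ and $\frac{-(1+\sqrt{1-4\mu\nu})}{2\mu}$ should appear.

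Rather than doing this by hand, I would rely on the standard known result (cf.\ \cite{EPLP25,BDP22}, or the appendix facts on semimonotone operators). It states that $J_{tA}$ is single-valued with full domain, and is in fact cocoercive (hence Lipschitz), exactly when $1+t\cdot(\text{suitable root})>0$. Concretely, the conditions are:
\begin{itemize}
\item if $\nu<0$, we need $t>\frac{-2\nu}{1+\sqrt{1-4\mu\nu}}$;
\item if $\mu<0$, we need $t<\frac{-(1+\sqrt{1-4\mu\nu})}{2\mu}$.
\end{itemize}
Multiplying by $\delta_i$, these say precisely $\gamma>\sigma$ and $\gamma<\varsigma$ as in \eqref{eq:sigma_varsigma_specialcase}.

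Verifying this equivalence is the main obstacle. My plan is to write $\mathrm{Id}+tA$ and show that it is strongly monotone after a linear reparametrization of the graph. From that, maximality gives surjectivity through Minty's theorem (\cref{lemma:maximalmonotone_properties}(i)), and strong monotonicity of the inverse gives single-valuedness and a Lipschitz bound. Alternatively, I would check that this lemma is just the scalar specialization of \cref{prop:Lambda_nonempty}. In the scalar case, $\varrho_i$ rescales $(\mu_i,\nu_i)$ so that $\hat U_i,\hat V_i$ satisfy $\hat\mu\hat\nu$-compatible identities, and plugging $\|D_i\|=\delta_i=\lambda_{\min}(D_i)$ into $\sigma,\varsigma$ of \eqref{eq:sigma_varsigma} should simplify algebraically to \eqref{eq:sigma_varsigma_specialcase}. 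The refinement mentioned in the text suggests, however, that a direct argument is needed: in the scalar case, condition \eqref{eq:C(U,V)} becomes automatic.

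\textbf{Step 2 (triangular structure).} Given Step 1, $\Phi_\gamma(\bw)$ is computed through \eqref{eq:Phigamma_J_Dinverse_A} by forward substitution, because $\bN$ and $\bP$ are strictly lower triangular and $\bR$ is lower triangular. The $i$th argument depends only on $x_1,\dots,x_{i-1}$ and $\bw$, so each $x_i$ is uniquely determined. Hence $\Phi_\gamma$ has full domain on $\bOmega^\perp$ and is single-valued, which gives $(\sigma,\varsigma)\subseteq\Lambda$. Nonemptiness follows from the assumption $\sigma<\varsigma$.
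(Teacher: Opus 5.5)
\textbf{Verdict: genuine gap in Step~1.} The reduction $J_{\gamma\bD^{-1}\bA}=\diag\bigl(J_{(\gamma/\delta_1)A_1},\dots,J_{(\gamma/\delta_n)A_n}\bigr)$ is correct. So is your Step~2, forward substitution using that $\bN,\bP$ are strictly lower triangular and $\bR$ is lower triangular; you spell it out more explicitly than the paper, which only invokes \eqref{eq:Phigamma_J_Dinverse_A}. The transform $(A-a\Id)^{-1}-b\Id$ you start from is exactly the paper's \cref{prop:equivalence_semimonotone}.

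The gap is the rest of Step~1, which is the real content of the lemma. You never show that the stated thresholds work, and the ``standard result'' you fall back on is misstated. The resolvent is \emph{not} cocoercive in general when some $\nu_i<0$. Take $A=\alpha\Id$ with $\alpha<0$, which is maximally $(0,\alpha^{-1}\Id)$-semimonotone. With $\delta=1$, the condition $\gamma>\sigma$ reads $t>-1/\alpha$, and then $J_{tA}=(1+t\alpha)^{-1}\Id$ with $1+t\alpha<0$. This map is Lipschitz but not even monotone, so ``cocoercive, hence Lipschitz'' cannot deliver the Lipschitz claim.

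Your fallback of specializing \cref{prop:Lambda_nonempty} does not give \eqref{eq:sigma_varsigma_specialcase} either, so you were right to doubt it. When $\mu_i<0$ and $\nu_i<0$, the general lemma puts $1-\hat\mu_i\hat\nu_i$ instead of $1+\hat\mu_i\hat\nu_i$ in the denominator of $\sigma$. Its condition \eqref{eq:C(U,V)} then amounts to $\mu_i\nu_i<3/16$, which the scalar lemma does not assume.

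\textbf{The missing step.} Your phrase ``strongly monotone after a linear reparametrization'' points in the right direction, but the reparametrization and the threshold check are exactly what is missing. The paper supplies them via \cref{prop:resolvent_singlevalued_fulldomain} and \cref{remark:scalar_UVD,remark:scalarsemimonotone}. Set
\[
\hat A_i\coloneqq(A_i-\hat\mu_i\Id)^{-1}-\hat\nu_i\Id,\quad r\coloneqq\delta_i+\gamma\hat\mu_i,\quad s\coloneqq\gamma/r+\hat\nu_i,\quad z\coloneqq\gamma^{-1}(\delta_iw-ry),
\]
where $\hat A_i$ is maximally monotone. Then $y=J_{\gamma\delta_i^{-1}A_i}w$ if and only if $r^{-1}\delta_iw\in(\hat A_i+s\Id)z$ and $y=r^{-1}(\delta_iw-\gamma z)$.

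If $r>0$ and $s>0$, then $\hat A_i+s\Id$ is maximally and $s$-strongly monotone. By Minty, $z$ exists, is unique, and is a $\delta_i/(rs)$-Lipschitz function of $w$. Hence $y$ is single-valued, everywhere defined, and Lipschitz, with no cocoercivity needed.

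Finally, $r>0$ and $s>0$ reduce to $\delta_i+\gamma\hat\mu_i>0$ and $\delta_i\hat\nu_i+\gamma(1+\hat\mu_i\hat\nu_i)>0$. Use $\hat\mu_i=\frac{2\mu_i}{1+\sqrt{1-4\mu_i\nu_i}}$, $\hat\nu_i=\frac{\nu_i}{\sqrt{1-4\mu_i\nu_i}}$, and $1+\hat\mu_i\hat\nu_i=1+\varrho_i=\frac{1+\sqrt{1-4\mu_i\nu_i}}{2\sqrt{1-4\mu_i\nu_i}}>0$. The two inequalities then become exactly $\gamma<-\delta_i(1+\sqrt{1-4\mu_i\nu_i})/(2\mu_i)$ when $\mu_i<0$, and $\gamma>-2\delta_i\nu_i/(1+\sqrt{1-4\mu_i\nu_i})$ when $\nu_i<0$. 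Both are automatic otherwise. Intersecting over $i$ gives $(\sigma,\varsigma)$, and your Step~2 then completes the proof.
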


\begin{remark}[Final range of admissible $\gamma$]
	\label{remark:gamma_range} \text{}
	\begin{enumerate}
		\item By \cref{thm:convergence_implicitalgorithm}, convergence
		holds whenever $\gamma\in\Lambda\cap\Gamma{}$. By the preceding
		lemmas, $\Lambda\supseteq(\sigma,\varsigma)$, while $\Gamma{}$
		is given in \cref{tab:Gammastar}. Thus, defining
		\begin{equation*}
			\Delta\coloneqq(\sigma,\varsigma)\cap\Gamma{},
		\end{equation*}
		we obtain an explicit admissible range of parameters for
		\cref{algo:full}.
		
		\item For ease of reference,
		\cref{tab:Lambdastar,tab:Lambdastar_scalarUV} summarize the
		interval $(\sigma,\varsigma)$ across the various cases, which are
		primarily the cases where our convergence results apply. In the
		four cases where $(\sigma,\varsigma)=(0,+\infty)$, we used the
		fact that $\bU\succeq0$ and $\bV\succeq0$ by
		\cref{lemma:schurcomplementlemma}. In
		\cref{tab:Lambdastar_scalarUV}, we recast the conditions
		``$\bU_{11}$ is strongly monotone'' and
		``$\bV_{22}\succeq0$, $\ran(\bV_{22})$ is closed and
		$\ran(\bV_{21})\subseteq\ran(\bV_{22})$'' into equivalent
		characterizations given in \cref{lemma:UV_specialcase}.
		
		\item Note that $\Delta$ is always nonempty, except possibly  when 
		$\bU/\bU_{11}\not\succeq0$ and
		$\bV/\bV_{22}\not\succeq0$.
		
		\item Since $\bN$ and $\bP$ are strictly lower triangular in
		\cref{algo:full}, their first rows are zero. Moreover, for
		$\gamma\in(\sigma,\varsigma)$,
		$J_{\gamma D_1^{-1}A_1}$ is Lipschitz continuous by
		\cref{prop:Lambda_nonempty,prop:Lambda_nonempty_specialcase}.
		Hence, the boundedness condition in
		\cref{thm:convergence_implicitalgorithm}(ii) is automatically
		satisfied for \cref{algo:full}.
	\end{enumerate}
\end{remark}

\begin{table}[ht]
	\centering
	\renewcommand{\arraystretch}{1.35}
	\setlength{\arrayrulewidth}{0.8pt}
	\caption{\small The interval $(\sigma,\varsigma)$, with $\sigma$ and $\varsigma$ given by \eqref{eq:sigma_varsigma}, provides an \textit{estimate} of the set of all $\gamma>0$ for which $J_{\gamma \bD^{-1}\bA}$ is single-valued, has full domain, and is Lipschitz continuous.}
	\label{tab:Lambdastar}
	\begin{threeparttable}
		\begin{tabular}{|c|c!{\vrule width \Thick}c!{\vrule width \Thick}c|c|}
			\hline
			\multicolumn{2}{|c!{\vrule width \Thick}}{} &
			\multirow{2}{*}{$\bU = 0$} &
			\multicolumn{2}{c|}{$\bU_{11}\ \text{strongly monotone}$}\\
			\cline{4-5}
			\multicolumn{2}{|c!{\vrule width \Thick}}{} &
			& \multicolumn{1}{c|}{$\bU/\bU_{11}\succeq 0$} &
			\multicolumn{1}{c|}{$\bU/\bU_{11}\not\succeq 0$}\\
			\HThick
			
			\multirow{3}{*}{$\bV_{22}\succeq 0$} &
			$\bV=0$ &
			\multicolumn{2}{c|}{\multirow{2}{*}{$\left(0,\;+\infty\right)$}} &
			\multirow{2}{*}{$\left(0,\;\varsigma \right)$}\\
			\cline{2-2}
			
			& $\bV/\bV_{22}\succeq 0$\quad \tnote{\textit{a}} &
			\multicolumn{2}{c|}{} & \\
			\cline{2-5}
			
			& $\bV/\bV_{22}\not\succeq 0$\quad \tnote{\textit{a}} &
			\multicolumn{2}{c|}{$(\sigma,+\infty)$ \tnote{\textit{b}}} &
			$\left(\sigma,\;\varsigma\right)$ \tnote{\textit{b}} \tnote{\textbf{c}}\\
			\hline
		\end{tabular}
		
		\begin{tablenotes}[flushleft]
			\footnotesize
			\item[\textit{a}] Additionally, it is assumed here that $\ran(\bV_{22})$ is closed and $\ran(\bV_{21})\subseteq \ran(\bV_{22})$.
			\item[\textit{b}] For all indices $i$ such that $\hat{V}_i\not\succeq 0$, we assume that condition \eqref{eq:C(U,V)} with $(\hat{U},\hat{V})=(\hat{U}_i,\hat{V}_i)$ holds.
			\item[\textit{c}] We assume that $\sigma<\varsigma$.
		\end{tablenotes}	
	\end{threeparttable}
\end{table}

\begin{table}[ht]
	\centering
	\renewcommand{\arraystretch}{1.35}
	\setlength{\arrayrulewidth}{0.8pt}
	\caption{\small The interval $(\sigma,\varsigma)$, with $\sigma$ and $\varsigma$ given by \eqref{eq:sigma_varsigma_specialcase}, provides an estimate of the set of all $\gamma>0$ for which $J_{\gamma \bD^{-1} \bA}$ is single-valued, has full domain, and is Lipschitz continuous, \textit{when $U_i$, $V_i$ and $D_i$ are scalar operators}. In view of \cref{lemma:UV_specialcase}(ii), $\sigma$ can be simplified as $\sigma= \frac{2\delta_j(-\nu_j)_+ }{1+\sqrt{1-4\mu_j\nu_j}}$ where $\nu_j = \min_i \nu_i$. 
	}
	\label{tab:Lambdastar_scalarUV}
	\begin{threeparttable}
		\begin{tabular}{|c!{\vrule width \Thick}c!{\vrule width \Thick}c|c|}
			\hline
			\multicolumn{1}{|c!{\vrule width \Thick}}{} &
			\multirow{2}{*}{$\mu_i=0$ ($\forall i$)} &
			\multicolumn{2}{c|}{$\sum_{i=1}^n \mu_i > 0$}\\
			\cline{3-4}
			\multicolumn{1}{|c!{\vrule width \Thick}}{} &
			& \multicolumn{1}{c|}{$\mu_i\geq 0$} &
			\multicolumn{1}{c|}{$\exists i$ s.t. $\mu_i<0$}\\
			\HThick
			
			$\nu_i=0$ ($\forall i$) &
			\multicolumn{2}{c|}{\multirow{2}{*}{$\left(0,\;+\infty\right)$}} &
			\multirow{2}{*}{$\left(0,\;\varsigma \right)$}\\
			\cline{1-1}
			
			$\nu_i\geq 0$ ($\forall i$) &
			\multicolumn{2}{c|}{} &
			\\
			\cline{1-4}
			
			$\exists j \text{ s.t. }\nu_j<0$, $\nu_i>0$ ($\forall i\neq j$), $\sum_{i=1}^n \nu_i^{-1}<0 $ &
			\multicolumn{2}{c|}{$(\sigma,+\infty)$} &
			$\left(\sigma,\;\varsigma\right)$ \tnote{\textit{a}}\\
			\hline
		\end{tabular}
		\begin{tablenotes}[flushleft]
			\footnotesize
			\item[\textit{a}] We assume that $\sigma<\varsigma$.
		\end{tablenotes}		
	\end{threeparttable}
\end{table}

\subsection{Comparisons with existing works}\label{sec:comparisons}

To the best of our knowledge, \cref{algo:full_implicit,algo:full} provide the first schemes for \eqref{eq:gen_prob_recall} at this level of generality. In particular, they allow non-scalar operators $M_{ij}$, $N_{ij}$, $P_{ij}$, $R_{ij}$, and $D_i$, thereby also accommodating preconditioned resolvents. By contrast, existing methods for \eqref{eq:gen_prob_recall} are restricted to scalar choices of these operators. Furthermore, in the $(U,V)$-semimonotone setting, including the cases where either $U=0$ or $V=0$, our work is the first to address the general algorithmic framework. \cref{tab:existingworks} provides a broad overview, and each related work is discussed in detail in the following subsections.  In particular, the existing works listed in the table treat only specific algorithmic instances.
\begin{table}[ht]
	\centering
	\renewcommand{\arraystretch}{1.35}
	\setlength{\arrayrulewidth}{0.8pt}
\caption{\small Overview of existing works. An entry indicates that the cited work treats an instance of the
	corresponding regime; its precise scope is summarized below the table. Prior related works address only restricted instances of the present framework. More precisely, they assume that $\bM=M\otimes \Id$, $\bL=L\otimes \Id$, $\bP=P\otimes \Id$, and $\bR=R\otimes \Id$ for some real matrices $M,L,P,R$ of appropriate dimensions, and that $\bD=D\otimes \Id$, so that standard resolvents of scalar multiples of the $A_i$'s are used. Moreover, the semimonotonicity moduli $(U_i,V_i)$ and cocoercivity modulus $W_j$ are also taken to be scalar operators.}
	\label{tab:existingworks}
\begin{threeparttable}
	\begin{tabular}{|c|p{4.8cm}!{\vrule width \Thick}c!{\vrule width \Thick}c|c|}
		\hline
		\multicolumn{2}{|c!{\vrule width \Thick}}{} &
		\multirow{2}{*}{$\mu_i=0$ $(\forall i)$} &
		\multicolumn{2}{c|}{$\sum_{i=1}^n \mu_i>0$} \\
		\cline{4-5}
		\multicolumn{2}{|c!{\vrule width \Thick}}{} &
		& \multicolumn{1}{c|}{$\mu_i\geq 0$ $(\forall i)$} &
		\multicolumn{1}{c|}{$\exists i$ s.t.\ $\mu_i<0$} \\
		\HThick
		
		\multicolumn{2}{|c!{\vrule width \Thick}}{$\bB\neq 0$, $\nu_i=0$ $(\forall i)$} &
		\shortstack{DTT26 \\ DP21 } & DP21 
		 & DP21 
		 \\
		\hline
		
		\multirow{3}{*}{$\bB=0$} &
		$\nu_i=0$ $(\forall i)$ &
		\shortstack{DTT26  \\ AT25  \\ ADT25 \\ BDP22 } &
		\shortstack{AT25  \\ DP19 } &
		\shortstack{AT25  \\ DP19} \\
		\cline{2-5}
		
		& $\nu_i\geq 0$ $(\forall i)$ &
		\shortstack{ADT25  \\ BDP22 } &
		\shortstack{EPLP25 } &
		\shortstack{EPLP25 } \\
		\cline{2-5}
		
		& \shortstack[l]{$\exists j$ s.t.\ $\nu_j<0$, $\nu_i>0$ $(\forall i\neq j)$,\\
			$\sum_{i=1}^n \nu_i^{-1}<0$} &
		\shortstack{ADT25  \\ BDP22 } &
		\shortstack{EPLP25} &
		\shortstack{EPLP25 } \\
		\hline
	\end{tabular}
	\begin{tablenotes}[flushleft]
		\footnotesize
		\item DTT26 \cite{DTT26} treats general $n$ and arbitrary $M$, $L$, $P$, and $R$.
		\item AT25 \cite{AT25} and ADT25 \cite{ADT25} treat general $n$, but only for one specific choice of $M$ and $L$.
		\item EPLP25 \cite{EPLP25} covers only the case $n=2$ and one specific choice of $M$ and $L$, and is restricted to finite-dimensional Hilbert spaces.
		\item DP19 \cite{DP18} and BDP22 \cite{BDP22} cover only the case $n=2$ and one specific choice of $M$ and $L$.
		\item DP21 \cite{DP21} treats only $(n,p)=(2,1)$ with one specific choice of $M$ and $L$. 
	\end{tablenotes}
\end{threeparttable}
\end{table}

\subsubsection{General operator splitting in \cite{DTT26}}
The work \cite{DTT26} considers the case where $\bM=M\otimes \Id$, $\bL=L\otimes \Id$, $\bP=P\otimes \Id$, and $\bR=R\otimes \Id$ for some real matrices $M,L,P,R$ of appropriate dimensions, and where $\bD=D\otimes \Id$. The assumptions in \cite[Assumption 3.2]{DTT26} coincide with \cref{assume:blanket_multioperator,assume:blanket2_components,assume:semimonotone} in the special case $U_i=V_i=0$ and $W_j=\omega\Id$ for all $j\in \{1, \dots, p\}$. In addition, in view of the decomposition of $\bL$ in \eqref{eq:DN}, \cite{DTT26} assumes that
\begin{equation}
	\label{eq:DTT_assumption}
	L_s-\frac{1}{2}MM^\top \succeq 0.
\end{equation}
In particular, \eqref{eq:DTT_assumption} is strictly stronger than \cref{assume:L_s}. Thus, our framework is based on weaker assumptions in several respects. Moreover, since $J_{\gamma D_1^{-1}A_1}$ is Lipschitz continuous
in this setting, when the first rows of $\bN$ and $\bP$ are zero,
\cref{thm:convergence_implicitalgorithm}(ii) recovers the same weak
convergence conclusion as \cite[Theorem~3.10]{DTT26}.

We now compare the parameter conditions ensuring weak convergence. According to \cref{thm:mainresult}, we require $\gamma\in \Lambda\cap \Gamma{}$ and $\lambda_k\in [0,2\beta_{\gamma}]$, where
\begin{equation*}
	\Gamma{} = \left(0,\frac{4}{\rho_1}\right) \quad \text{and} \quad \beta_{\gamma} = \norm{\left(\hat{\Xi}^{1/2}_{\gamma}\right)^\dagger M}^{-2},
\end{equation*}
with $\rho_1=\omega\,\lambda_{\max}\!\left((L_s^{1/2})^\dagger QQ^\top (L_s^{1/2})^\dagger\right)$ and $\hat{\Xi}_{\gamma}=L_s-\frac{\gamma\omega}{4}QQ^\top$, where $Q=P-R^\top$. On the other hand, \cite[Theorem 3.10]{DTT26} requires $\gamma\in \Lambda\cap \Gamma^{\rm DTT}$ and $\lambda_k\in [0,2\beta_{\gamma}^{\rm DTT}]$, where
\begin{equation*}
	\Gamma^{\rm DTT} = \left(0,\frac{2}{\omega \tau}\right) \quad \text{and} \quad \beta_{\gamma}^{\rm DTT} = \frac{2-\gamma \omega\tau}{4},
\end{equation*}
with $\tau=\norm{Q^\top (M^\top)^\dagger}^2$. Under the more restrictive assumption \eqref{eq:DTT_assumption}, the admissible ranges of both $\gamma$ and $\lambda_k$ obtained in this work are larger than those in \cite{DTT26}. Indeed, by \cref{prop:compare_gamma_upperbound},
\begin{equation*}
	\Gamma^{\rm DTT} \subseteq \Gamma{} \quad \text{and} \quad \beta_{\gamma}^{\rm DTT} \leq \beta_{\gamma},
\end{equation*}
and these relations can be strict; see \cref{ex:strictlylarger_gamma_beta}. In particular, when $p=0$ so that $Q=0$, we recover the classical range $\lambda_k\in [0,2]$ for the Douglas--Rachford algorithm for two operators, obtained by choosing $L$ and $M$ as in \cref{ex:strictlylarger_gamma_beta}, whereas \cite{DTT26} restricts $\lambda_k$ to $[0,1]$. 

We emphasize that these stronger convergence guarantees, obtained under weaker assumptions and in a framework that extends beyond the setting of \cite{DTT26}, are proved by simpler and more streamlined arguments. As shown in \cref{sec:proof_mainresult}, this is a consequence of our abstract subspace reformulation, which reduces the analysis to decompositions of the relevant operators with respect to the constraint set $\bOmega$ and its orthogonal complement. In particular, the weak convergence of the shadow sequence does not require a lengthy componentwise unwrapping of all $n$ resolvents appearing in the algorithm, but only the more concise subspace-based decomposition carried out in \cref{lemma:maximalmonotonedecomposition} followed by a direct application of \cref{lemma:demiclosednessprinciple}.

\subsubsection{Multioperator Douglas--Rachford algorithm under generalized monotonicity and comonotonicity}

Outside the monotone setting with $\bB=0$, to the best of our knowledge, the only works treating the multioperator case are \cite{AT25,ADT25}, and both are restricted to a specific algorithm. In \cite{AT25}, the algorithm is obtained by applying the Douglas--Rachford method to the weighted product space reformulation of \eqref{eq:productspace_recall} based on \cite{Cam22}. In the case of equal weights, this reduces to the multioperator Douglas--Rachford algorithm considered in \cite{ADT25}. Moreover, the equal-weight version is a special instance of \cref{algo:full}; see \cref{prop:DR_multioperator}.

As in the previous subsection, the convergence conclusions obtained in the present work recover those in \cite{AT25,ADT25}; the main difference lies in the assumptions and in the admissible parameter ranges, although it is not immediately clear to us whether the parameter conditions in \cite{AT25,ADT25} are more restrictive than those required in the present work. Nevertheless, several distinctions can already be observed in the scalar semimonotone setting. First, consider the case $\bU=\diag(\mu_1,\dots,\mu_n)\otimes \Id$ and $\bV=0$, which is the setting treated in \cite{AT25}. By \cref{thm:convergence_implicitalgorithm}(iii) and \cref{remark:gamma_range}(i), our convergence result applies whenever $\bU_{11}$ is strongly monotone, which, by \cref{lemma:UV_specialcase}, holds as soon as $\sum_{i=1}^n \mu_i>0$. In particular, the ordering of the moduli $\mu_i$ plays no role. By contrast, \cite[Theorem~4.14]{AT25} explicitly singles out the
last operator through the condition $\mu_n\neq0$, whereas our
condition is invariant under permutations of the $\mu_i$'s.

Next, consider the case $\bU=0$ and $\bV=\diag(\nu_1,\dots,\nu_n)\otimes \Id$, which is the setting treated in \cite{ADT25}. Again, our condition given by \cref{lemma:UV_specialcase} is invariant under permutations of the $\nu_i$'s. In particular, if exactly one modulus is negative, then that negative modulus may correspond to any one of the operators $A_i$. In contrast, the condition in \cite{ADT25} singles out the last operator: the negative modulus must be associated with $A_n$. 

Finally, we also emphasize that the present analysis is based on a shorter and more unified argument, whereas the proofs in \cite{AT25,ADT25} require more extensive case-by-case computations.

\subsubsection{Douglas--Rachford for two operators under semimonotonicity}
The first work treating the genuinely semimonotone case, that is, when $\bU$ and $\bV$ are not both zero, appears to be \cite{EPLP25}, which considers the Douglas--Rachford algorithm in the finite-dimensional setting and only for the case $n=2$. As noted in \cref{prop:DR_twooperator}, when $n=2$, the interval $\Gamma{}$ in \cref{tab:Gammastar} can be computed explicitly, whereas in the general case only estimates are available; see \cref{tab:Gammastar}. In this two-operator setting, the admissible range of $\gamma$ obtained here exactly coincides with that reported in \cite[Table 3]{EPLP25}, and the same is true for the relaxation range of $\lambda_k$; see also \cref{prop:DR_twooperator}.

The main point of departure is that our framework, when specialized to $n=2$, also covers the infinite-dimensional setting. In addition, our analysis is based on a simpler viewpoint: we treat the Douglas--Rachford method as a forward scheme, rather than as a preconditioned proximal point algorithm applied to the primal-dual reformulation of \eqref{eq:productspace_recall}.

In the infinite-dimensional setting, still with $n=2$, the weak convergence of the shadow sequence $\{\bx^k\}$ was established in \cite[Theorem 4.2]{BCP20} under the assumptions $\nu_1+\nu_2\geq 0$ and $\mu_1=\mu_2=0$. Their approach relies on alternative demiclosedness principles based on \cite[Proposition 20.60]{BC17}, a generalization of \cref{lemma:demiclosednessprinciple}, together with a weighted inner product on the product space. By contrast, for the case $\nu_1+\nu_2>0$, our proof of \cref{thm:mainconvergence_U=0} does not require such additional machinery or the accompanying lengthy calculations: it relies only on the subspace-based decomposition in \cref{lemma:maximalmonotonedecomposition} and a direct application of \cref{lemma:demiclosednessprinciple}.

\section{Proof of the main convergence theorem}\label{sec:proof_mainresult}

In this section, we discuss the proof of \cref{thm:mainresult}. Its three assertions are established separately in \cref{thm:weakconvergence_zk,thm:mainconvergence_U=0,thm:mainconvergence_U>0}. 

\subsection{Fundamental inequality}\label{sec:fundamentalinequality}

The forward scheme interpretation motivates a direct approach to weak convergence of $\{\bz^k\}$: it suffices to show that $\Psi_{\gamma}$ is cocoercive; cf. \cite[Theorem 26.14]{BC17} and \cite[Propositions 2.9 and 3.3]{BDP22}. For this, it is enough to prove that there exists $\beta>0$ such that
\begin{equation*}
	\inner{\bM^*\Delta \bx}{\Delta \bz} \geq \beta \norm{\bM^*\Delta \bx}^2
	= \inner{\beta \bM \bM^* \Delta \bx }{\Delta \bx},
\end{equation*}
where $\Delta \bz \coloneqq \bz - \bz'$, $\Delta \bx \coloneqq \bx - \bx'$, and $\bx =  \Phi_{\gamma}(\bM \bz)$, $\bx'= \Phi_{\gamma}(\bM \bz')$. To this end, we provide	 a lower bound for $\inner{\bM^*\Delta \bx}{\Delta \bz}$ expressed solely in terms of $\Delta \bx$.

\begin{lemma}
	\label{lemma:fundamentalinequality}
	Suppose that \cref{assume:blanket}(i),(ii),(iv) and \cref{assume:semimonotone_general} hold. Let $\gamma \in \Lambda$, where $\Lambda$ is given by \eqref{eq:Lambda}. For any $\bz,\bz' \in \bcalH'$, let $\bx =  \Phi_{\gamma}(\bM \bz)$, $\bx'= \Phi_{\gamma}(\bM \bz')$. We have 
	\begin{equation*}
		\label{eq:estimate}
		\inner{\Psi_{\gamma}(\bz) - \Psi_{\gamma}(\bz')}{\bz-\bz'} = \inner{\bM^* \Delta \bx}{\Delta \bz}\geq \inner{\bXi_{\gamma} \Delta \bx}{\Delta \bx},
	\end{equation*}
	where $\Delta \bz \coloneqq \bz - \bz'$, $\Delta \bx \coloneqq \bx - \bx'$ and $\bXi_{\gamma}$ is given by 
	\begin{align}
		 \bXi_{\gamma}   &\coloneqq \bL_s + \gamma \bU - \bTheta_{\gamma} \notag \\
		\bTheta_{\gamma} & \coloneqq  \begin{cases}
			\frac{\gamma}{4}(\bP - \bR^*) \bW (\bP^* - \bR)  & \text{in Case I}, \\
			- \frac{1}{\gamma} \bL^* \bQ \bL &\text{in Case II},
		\end{cases} \label{eq:bTheta}
	\end{align}
	with $\bQ \coloneqq\bV / \bV_{22}$.
\end{lemma}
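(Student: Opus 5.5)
Since $\gamma\in\Lambda$, the points $\bx=\Phi_\gamma(\bM\bz)$ and $\bx'=\Phi_\gamma(\bM\bz')$ are well defined. By definition of $\Phi_\gamma$ there are $\ba\in\bA\bx$ and $\ba'\in\bA\bx'$ with
\[
\bM\bz+\bq=\gamma\ba+\gamma\bP\bB(\bR\bx)+\bL\bx,\qquad \bM\bz'+\bq=\gamma\ba'+\gamma\bP\bB(\bR\bx')+\bL\bx'.
\]
Subtracting gives $\bM\Delta\bz=\gamma\Delta\ba+\gamma\bP\Delta\bb+\bL\Delta\bx$, where $\Delta\ba=\ba-\ba'$ and $\Delta\bb=\bB(\bR\bx)-\bB(\bR\bx')$. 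The identity $\inner{\Psi_\gamma(\bz)-\Psi_\gamma(\bz')}{\Delta\bz}=\inner{\bM^*\Delta\bx}{\Delta\bz}$ is immediate, since $\bb$ cancels. Taking the inner product of the difference equation with $\Delta\bx$ yields
\[
\inner{\bM^*\Delta\bx}{\Delta\bz}=\inner{\bL_s\Delta\bx}{\Delta\bx}+\gamma\inner{\Delta\ba}{\Delta\bx}+\gamma\inner{\Delta\bb}{\bP^*\Delta\bx},
\]
and it remains to bound the last two terms from below.

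\textbf{Case I} ($\bV=0$). Semimonotonicity gives $\gamma\inner{\Delta\ba}{\Delta\bx}\ge\gamma\inner{\bU\Delta\bx}{\Delta\bx}$. For the forward term, write $\bP^*\Delta\bx=\bR\Delta\bx+(\bP^*-\bR)\Delta\bx$. Cocoercivity of $\bB$ gives
\[
\inner{\Delta\bb}{\bR\Delta\bx}\ge\norm{\Delta\bb}^2_{\bW^{-1}}.
\]
Fact~\ref{lemma:Young} with the operator $\bW^{-1}$ gives
\[
|\inner{\Delta\bb}{(\bP^*-\bR)\Delta\bx}|\le\norm{\Delta\bb}^2_{\bW^{-1}}+\tfrac14\norm{(\bP^*-\bR)\Delta\bx}^2_{\bW}.
\]
The $\norm{\Delta\bb}_{\bW^{-1}}^2$ terms cancel, leaving $-\frac{\gamma}{4}\inner{(\bP-\bR^*)\bW(\bP^*-\bR)\Delta\bx}{\Delta\bx}=-\inner{\bTheta_\gamma\Delta\bx}{\Delta\bx}$, which is exactly the claim.

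\textbf{Case II} ($\bB=0$). Here $\gamma\Delta\ba=\bM\Delta\bz-\bL\Delta\bx$ and
\[
\gamma\inner{\Delta\ba}{\Delta\bx}\ge\gamma\inner{\bU\Delta\bx}{\Delta\bx}+\gamma\inner{\bV\Delta\ba}{\Delta\ba}.
\]
The key point is that $\bM\Delta\bz\in\bOmega^\perp$, so $\Pi_{\bOmega}\gamma\Delta\ba=-\Pi_{\bOmega}\bL\Delta\bx$, while the $\bOmega^\perp$-component of $\Delta\ba$ is uncontrolled. I would therefore minimize the quadratic form $\inner{\bV\bv}{\bv}$ over $\bv=\bv_1+\bv_2$ with $\bv_1\in\bOmega$ fixed and $\bv_2\in\bOmega^\perp$ free. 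By Fact~\ref{lemma:schurcomplementlemma}(ii) applied to $\bV$ shifted appropriately, that is, using $\bV_{22}\succeq0$, closed range, and $\ran(\bV_{21})\subseteq\ran(\bV_{22})$, the infimum equals $\inner{(\bV/\bV_{22})\bv_1}{\bv_1}$. Concretely, I would complete the square:
\[
\inner{\bV\bv}{\bv}=\inner{(\bV/\bV_{22})\bv_1}{\bv_1}+\norm{\bV_{22}^{1/2}\bv_2+(\bV_{22}^{1/2})^\dagger\bV_{21}\bv_1}^2,
\]
which is valid because $\bV_{22}\bV_{22}^\dagger\bV_{21}=\bV_{21}$ by the range inclusion. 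With $\bv_1=-\gamma^{-1}\Pi_{\bOmega}\bL\Delta\bx$ this gives $\gamma\inner{\bV\Delta\ba}{\Delta\ba}\ge\frac1\gamma\inner{\bQ\bL\Delta\bx}{\bL\Delta\bx}$, using that $\bQ$ lives on $\bOmega$ (extended by zero). Hence the lower bound is $-\inner{\bTheta_\gamma\Delta\bx}{\Delta\bx}$.

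\textbf{Main obstacle.} The delicate step is this Schur-complement completion of squares in infinite dimensions with the pseudoinverse, and checking that the cross terms produce exactly $\bV_{12}\bV_{22}^\dagger\bV_{21}$; the range-inclusion and closed-range hypotheses of Case~II are precisely what make it work. Both cases then combine into $\inner{\bM^*\Delta\bx}{\Delta\bz}\ge\inner{\bXi_\gamma\Delta\bx}{\Delta\bx}$.
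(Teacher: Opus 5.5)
Your proof is correct and follows the paper's argument. You use the same expansion of $\inner{\bM\Delta\bz}{\Delta\bx}$ and the same cocoercivity-plus-Young bound in Case~I. In Case~II you use the same minimization of $\inner{\bV\bv}{\bv}$ over the free $\bOmega^\perp$-component, which the paper isolates as \cref{lemma:QP} and which your completion of the square proves inline; the identity actually needed there is $\bV_{22}^{1/2}(\bV_{22}^{1/2})^\dagger\bV_{21}=\bV_{21}$, valid since $\ran(\bV_{22}^{1/2})=\ran(\bV_{22})$ is closed. A cosmetic point: the Young inequality you display is \cref{lemma:Young} applied with $W=\bW$, not with $\bW^{-1}$.
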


\begin{proof}
	Since $\bM \bz + \bq -  \gamma \bP \bB (\bR \bx) -\bL \bx \in \gamma \bA \bx $ and $\bM \bz' + \bq -  \gamma \bP \bB (\bR \bx') -\bL \bx' \in \gamma \bA \bx' $, it follows from \cref{assume:blanket}(i) that 
	\[ \frac{1}{\gamma}\inner{\bM \Delta \bz - \gamma \bP \Delta \bs - \bL \Delta \bx}{ \Delta \bx} \geq  \inner{\Delta \bx}{\bU \Delta \bx}+ \frac{1}{\gamma^2}  \inner{\Delta \by}{\bV \Delta \by},\]
	where $\Delta \bs \coloneqq   \bB (\bR \bx) -  \bB (\bR \bx')$ and $\Delta \by \coloneqq \bM \Delta \bz - \gamma \bP \Delta \bs - \bL \Delta \bx$. It follows that 
	\begin{equation}
		\inner{\bM \Delta \bz }{ \Delta \bx} \geq  \gamma \inner{\bP \Delta \bs}{\Delta \bx}  + \inner{\Delta \bx}{\bL \Delta \bx}+\gamma \inner{\Delta \bx}{\bU \Delta \bx} +\frac{1}{\gamma}  \inner{\Delta \by}{\bV \Delta \by}.
		\label{eq:monotonicityinequality}
	\end{equation}
	To obtain a lower bound for the first term,
	we write
	\begin{align}
		\inner{\bP \Delta \bs}{\Delta \bx} & = \inner{\Delta \bs}{\bR \Delta \bx} + \inner{\Delta \bs}{(\bP^*-\bR)\Delta \bx}\notag \\
		& \geq \norm{\Delta \bs}_{\bW^{-1}}^2 - \left( \frac{1}{4}\norm{(\bP^*-\bR)\Delta \bx}_{\bW}^2 + \norm{\Delta \bs}^2_{\bW^{-1}} \right)\notag \\
		& = - \frac{1}{4}\norm{(\bP^*-\bR)\Delta \bx}_{\bW}^2 \label{eq:PBR_monotone}
	\end{align}
	where the inequality holds by cocoercivity of $\bB$ (\cref{assume:blanket}(ii)) and Young's inequality (\cref{lemma:Young}). The  last term in \eqref{eq:monotonicityinequality} is zero in Case I.
	In Case II, note that $\Delta \by + \bL \Delta \bx \in \ran (\bM) = \bOmega^{\perp}$ by \cref{assume:blanket}(iv). Invoking \cref{lemma:QP}, we have $\inner{\Delta \by}{\bV \Delta \by} \geq \inner{\bL \Delta \bx}{\bQ \bL \Delta \bx}$. This proves the claim. 
\smartqedmark \end{proof}

\subsection{Weak convergence of the main sequence}\label{sec:weak_z}

The following lemma converts the estimate in
\cref{lemma:fundamentalinequality} into a sufficient condition for the
cocoercivity of $\Psi_\gamma$.

\begin{lemma}[Cocoercivity of $\Psi_{\gamma}$]
	\label{lemma:cocoercivityofPsigamma}
	Suppose that \cref{assume:blanket}(i),(ii),(iv) and
	\cref{assume:semimonotone_general} hold, and let
	$\gamma\in\Lambda$. Suppose that there exists a self-adjoint
	operator $\widehat{\bXi}_{\gamma}\preceq\bXi_{\gamma}$ such that
	$\ker(\widehat{\bXi}_{\gamma})\subseteq\bOmega$ and
	$\widehat{\bXi}_{\gamma}$ is strongly monotone on
	$\ker(\widehat{\bXi}_{\gamma})^\perp$. If
	$
	\beta_{\gamma}
	\coloneqq
	\norm{
		\left(\widehat{\bXi}_{\gamma}^{1/2}\right)^\dagger
		\bM
	}^{-2},
	$
	then $\Psi_{\gamma}$ is $\beta_{\gamma}$-cocoercive.
\end{lemma}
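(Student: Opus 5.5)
Starting from \cref{lemma:fundamentalinequality}, for $\bz,\bz'\in\bcalH'$ with $\bx=\Phi_\gamma(\bM\bz)$ and $\bx'=\Phi_\gamma(\bM\bz')$ (well defined and single-valued since $\gamma\in\Lambda$), we have
\[
\inner{\Psi_\gamma(\bz)-\Psi_\gamma(\bz')}{\Delta\bz}=\inner{\bM^*\Delta\bx}{\Delta\bz}\geq\inner{\bXi_\gamma\Delta\bx}{\Delta\bx}\geq\inner{\widehat{\bXi}_\gamma\Delta\bx}{\Delta\bx}=\norm{\widehat{\bXi}_\gamma^{1/2}\Delta\bx}^2,
\]
where $\widehat{\bXi}_\gamma\succeq0$ (a consequence of strong monotonicity on $\ker(\widehat{\bXi}_\gamma)^\perp$ together with vanishing on the kernel), so its square root is well defined. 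Since $\Psi_\gamma(\bz)-\Psi_\gamma(\bz')=\bM^*\Delta\bx$, it then suffices to prove
\[
\norm{\bM^*\Delta\bx}^2\leq\norm{(\widehat{\bXi}_\gamma^{1/2})^\dagger\bM}^2\,\norm{\widehat{\bXi}_\gamma^{1/2}\Delta\bx}^2,
\]
which gives $\beta_\gamma$-cocoercivity.

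The key step is to factor $\bM^*$ through $\widehat{\bXi}_\gamma^{1/2}$. Set $\bS\coloneqq\widehat{\bXi}_\gamma^{1/2}$. Because $\widehat{\bXi}_\gamma$ is strongly monotone on $\ker(\widehat{\bXi}_\gamma)^\perp$, $\bS$ has closed range $\ran(\bS)=\ker(\bS)^\perp=\ker(\widehat{\bXi}_\gamma)^\perp$, so $\bS^\dagger\in\mathcal B(\bcalH)$ is bounded and $\bS\bS^\dagger=\bS^\dagger\bS=\Pi_{\ker(\widehat{\bXi}_\gamma)^\perp}$. By hypothesis $\ker(\widehat{\bXi}_\gamma)\subseteq\bOmega$, hence $\bOmega^\perp\subseteq\ker(\widehat{\bXi}_\gamma)^\perp$. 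By \cref{assume:blanket}(iv), $\ran(\bM)=\bOmega^\perp$, so $\Pi_{\ker(\widehat{\bXi}_\gamma)^\perp}\bM=\bM$, i.e.\ $\bS\bS^\dagger\bM=\bM$. Taking adjoints (with $\bS$ and $\bS^\dagger$ self-adjoint) gives $\bM^*=\bM^*\bS^\dagger\bS=((\bS^\dagger)\bM)^*\bS$. Consequently,
\[
\norm{\bM^*\Delta\bx}=\norm{(\bS^\dagger\bM)^*\bS\Delta\bx}\leq\norm{\bS^\dagger\bM}\,\norm{\bS\Delta\bx},
\]
which is exactly the required bound after squaring. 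One should also note that $\bS^\dagger\bM\neq0$ unless $\bM=0$, so that $\beta_\gamma\in(0,+\infty]$ is meaningful; the degenerate case is trivial under the convention $a/0=+\infty$.

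The main obstacle is the careful justification of the closed-range and Moore--Penrose facts for $\bS=\widehat{\bXi}_\gamma^{1/2}$: namely that $\ker(\bS)=\ker(\widehat{\bXi}_\gamma)$, and that $\bS$ is bounded below on $\ker(\bS)^\perp$. The latter holds because $\inner{\widehat{\bXi}_\gamma x}{x}=\norm{\bS x}^2\geq c\norm{x}^2$ there, which yields closed range and a bounded $\bS^\dagger$ commuting appropriately with $\bS$. Once these are in place, the factorization and the Cauchy--Schwarz type bound above are routine.
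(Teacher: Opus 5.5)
Your proposal is correct and is essentially the paper's argument. The paper invokes \cref{lemma:beta_existence}(i) to get $\widehat{\bXi}_{\gamma}-\beta_{\gamma}\bM\bM^*\succeq 0$ and chains it with \cref{lemma:fundamentalinequality} and $\widehat{\bXi}_{\gamma}\preceq\bXi_{\gamma}$; the proof of that lemma rests on the same identity $\Pi_{\ker(\widehat{\bXi}_{\gamma})^\perp}=\widehat{\bXi}_{\gamma}^{1/2}(\widehat{\bXi}_{\gamma}^{1/2})^\dagger$ that you use to factor $\bM^*=((\widehat{\bXi}_{\gamma}^{1/2})^\dagger\bM)^*\widehat{\bXi}_{\gamma}^{1/2}$, so your inline norm bound is simply the direct form of that operator inequality, and your justifications of the closed range of $\widehat{\bXi}_{\gamma}^{1/2}$ and of $\beta_{\gamma}>0$ are sound.
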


\begin{proof}
	Since
	$\ker(\widehat{\bXi}_{\gamma})
	\subseteq\bOmega=\ker(\bM^*)$,
	\cref{lemma:beta_existence}(i) gives
	$
	\widehat{\bXi}_{\gamma}
	-
	\beta_{\gamma}\bM\bM^*
	\succeq 0.
	$
	On the other hand, by
	\cref{lemma:fundamentalinequality} and
	$\widehat{\bXi}_{\gamma}\preceq\bXi_{\gamma}$,
	\begin{align*}
		\inner{
			\Psi_{\gamma}(\bz)-\Psi_{\gamma}(\bz')
		}{
			\bz-\bz'
		}
		&\geq
		\inner{\bXi_{\gamma}\Delta\bx}{\Delta\bx}\geq
		\inner{\widehat{\bXi}_{\gamma}\Delta\bx}{\Delta\bx}\geq
		\beta_{\gamma}
		\inner{\bM\bM^*\Delta\bx}{\Delta\bx}
		\end{align*}
	That is, $	\inner{
		\Psi_{\gamma}(\bz)-\Psi_{\gamma}(\bz')
	}{
		\bz-\bz'
	} \geq \beta_{\gamma}
	\norm{
	\Psi_{\gamma}(\bz)-\Psi_{\gamma}(\bz')
	}^2.$
	Thus, $\Psi_{\gamma}$ is $\beta_{\gamma}$-cocoercive.
\smartqedmark \end{proof}

The following proposition provides an explicit subset of $\Gamma$.
\begin{proposition}
	\label{prop:Gammastar_cocoercivity}
	Suppose that \cref{assume:blanket,assume:preconditioner,assume:semimonotone_general} hold. Let $\Lambda$ be given by \eqref{eq:Lambda}, $\bTheta_{\gamma}$ be given by \eqref{eq:bTheta},
	\begin{equation}
		\widehat{\bXi}_{\gamma}
		\coloneqq \bL_s-\bTheta_{\gamma}+\gamma\bX, \text{ where } \bX\coloneqq \bU/\bU_{11}, \label{eq:widehatXi_and_X}
	\end{equation}
	and define
	\begin{equation}
		\Gamma{}
		\coloneqq
		\left\lbrace
		\gamma>0:
		\lambda_{\max}\left(
		(\bL_s^{1/2})^{\dagger}
		\left(\bTheta_{\gamma}-\gamma\bX\right)
		(\bL_s^{1/2})^{\dagger}
		\right)<1
		\right\rbrace.\label{eq:Gamma*_proofsection}
	\end{equation}
	Then 
	\begin{enumerate}
		\item $\bY\coloneqq \bU-\bX\succeq0$. 
		\item If $\gamma\in\Lambda\cap\Gamma{}$ and
		$\beta_{\gamma}
			\coloneqq
			\norm{
				\left(\widehat{\bXi}_{\gamma}^{1/2}\right)^{\dagger}\bM
			}^{-2},$
		then $\Psi_{\gamma}$ is
		$\beta_{\gamma}$-cocoercive.
		\item $\Gamma{}$ can be written
		explicitly as follows:
		\begin{enumerate}
			\item In Case I, $\Gamma{} = \left(0, \frac{4}{\max\{ 0,\rho\}}\right)$ where $$
				\rho\coloneqq
				\lambda_{\max}\left(
				(\bL_s^{1/2})^{\dagger}
				\left[
				(\bP-\bR^*)\bW(\bP^*-\bR)-4\bX
				\right]
				(\bL_s^{1/2})^{\dagger}
				\right).$$
			
			\item In Case II, $$
				\Gamma{}
				=
				\left\lbrace
				\gamma>0:
				\gamma+
				\lambda_{\min}\left(
				(\bL_s^{1/2})^{\dagger}
				\left[
				\gamma^2\bX+
				\bL^*(\bV/\bV_{22})\bL
				\right]
				(\bL_s^{1/2})^{\dagger}
				\right)>0
				\right\rbrace.$$
		\end{enumerate}
	\end{enumerate}

\end{proposition}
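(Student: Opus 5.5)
Part (i) is a Schur complement computation. When $\bU=0$ we have $\bX=0$ and $\bY=0$, so there is nothing to prove. Otherwise $\bU_{11}$ is strongly monotone by \cref{assume:U}. With respect to $\bOmega\oplus\bOmega^\perp$, $\bX=\bU/\bU_{11}$ sits only in the $(2,2)$ block, so
\[
\bY=\bU-\bX=\begin{pmatrix}\bU_{11}&\bU_{12}\\ \bU_{21}&\bU_{21}\bU_{11}^{-1}\bU_{12}\end{pmatrix}.
\]
Its Schur complement with respect to the strongly monotone $(1,1)$ block is $\bY/\bU_{11}=0\succeq0$, and \cref{lemma:schurcomplementlemma}(i) then gives $\bY\succeq0$.

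For (ii), I would verify the hypotheses of \cref{lemma:cocoercivityofPsigamma} with $\widehat{\bXi}_\gamma=\bL_s-\bTheta_\gamma+\gamma\bX$.

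\emph{Comparison with $\bXi_\gamma$.} We have $\bXi_\gamma-\widehat{\bXi}_\gamma=\gamma(\bU-\bX)=\gamma\bY\succeq0$ by (i).

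\emph{Support of $\bTheta_\gamma-\gamma\bX$.} Next I show $\bTheta_\gamma-\gamma\bX$ vanishes on $\bOmega$, i.e., it is supported on $\bOmega^\perp=\ker(\bL_s)^\perp$.
\begin{itemize}
\item $\bX$ lives on $\bOmega^\perp$ by construction.
\item In Case~I, $(\bP^*-\bR)$ kills $\bOmega$ by \cref{assume:PR}.
\item In Case~II, $\bV/\bV_{22}=\bV_{11}-\bV_{12}\bV_{22}^\dagger\bV_{21}$ lives on $\bOmega$, and $\bL(\bOmega)\subseteq\bOmega^\perp$. Hence $\langle\bL^*\bQ\bL\bx,\bx\rangle=\langle\bQ\bL\bx,\bL\bx\rangle$ only sees $\Pi_{\bOmega}\bL\bx$, which vanishes when $\bx\in\bOmega$. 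I still need $\bL^*\bQ\bL\bx=0$ for $\bx\in\bOmega$ (not just the quadratic form); this holds since $\bQ\bL\bx=\bQ\Pi_\bOmega\bL\bx=0$.
\end{itemize}

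\emph{Strong monotonicity on $\bOmega^\perp$.} Set $\bC:=(\bL_s^{1/2})^\dagger(\bTheta_\gamma-\gamma\bX)(\bL_s^{1/2})^\dagger$. By \cref{assume:L_s}, $\bL_s$ is strongly monotone on $\bOmega^\perp$, so $\bL_s^{1/2}$ is invertible there with bounded pseudo-inverse. On $\bOmega^\perp$ write $\bx=(\bL_s^{1/2})^\dagger\bu$ with $\bu\in\bOmega^\perp$. Then
\[
\langle\widehat{\bXi}_\gamma\bx,\bx\rangle=\|\bu\|^2-\langle \bC\bu,\bu\rangle\ge(1-\lambda_{\max}(\bC))\|\bu\|^2\ge c\|\bx\|^2,
\]
using $\gamma\in\Gamma$ and $\|\bu\|^2\ge \|\bL_s\|^{-1}\|\bx\|^2$.

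\emph{Kernel.} Since $\widehat{\bXi}_\gamma$ vanishes on $\bOmega$ (the three terms do), $\widehat{\bXi}_\gamma$ is block diagonal. Its kernel is therefore exactly $\bOmega$, and it is strongly monotone on $\bOmega^\perp$. Now \cref{lemma:cocoercivityofPsigamma} applies. The main obstacle here is the bookkeeping with pseudo-inverses and the zero-extension convention, particularly confirming that $\bL^*\bQ\bL$ has no off-diagonal part; I would handle it by writing everything in $2\times2$ block form.

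For (iii), the goal is to extract $\gamma$ from the eigenvalue condition, writing $\bK:=(\bL_s^{1/2})^\dagger$.
\begin{itemize}
\item \emph{Case~I.} Here $\bTheta_\gamma-\gamma\bX=\frac{\gamma}{4}[(\bP-\bR^*)\bW(\bP^*-\bR)-4\bX]$. Hence $\lambda_{\max}=\frac{\gamma}{4}\rho$ by positive homogeneity, and the condition $\gamma\rho/4<1$ gives $(0,4/\max\{0,\rho\})$, using the convention $a/0=+\infty$.
\item \emph{Case~II.} Here $\bTheta_\gamma-\gamma\bX=-\frac1\gamma[\bL^*(\bV/\bV_{22})\bL+\gamma^2\bX]$. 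Therefore $\lambda_{\max}(\bK(\cdot)\bK)=-\frac1\gamma\lambda_{\min}(\bK[\gamma^2\bX+\bL^*(\bV/\bV_{22})\bL]\bK)$. Multiplying $<1$ by $\gamma>0$ yields the stated set.
\end{itemize}
One caveat: $\lambda_{\max}$ and $\lambda_{\min}$ should be read on $\bOmega^\perp$ (the range of $\bK$), because on $\bOmega$ the operator is zero. I would state this interpretation explicitly so that the formulas are consistent.
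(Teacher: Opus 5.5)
Your proposal is correct and follows the paper's proof essentially step for step: the same Schur-complement argument for (i); for (ii), the same checks that $\bTheta_\gamma$ vanishes on $\bOmega$ and that $\widehat{\bXi}_\gamma\preceq\bXi_\gamma$, fed into \cref{lemma:cocoercivityofPsigamma}, where you simply prove the content of \cref{lemma:beta_existence}(ii) inline; and the same rescaling for (iii). One slip needs fixing: $\|\bu\|^2=\langle\bL_s\bx,\bx\rangle$ is bounded below by the strong-monotonicity modulus of $\bL_s$ on $\bOmega^\perp$ from \cref{assume:L_s}, not by $\|\bL_s\|^{-1}$, which can be much larger. Also, your caveat about reading $\lambda_{\max}$ and $\lambda_{\min}$ on $\bOmega^\perp$ is harmless but unnecessary, because the zero eigenvalue coming from $\bOmega$ never changes the conditions $\lambda_{\max}(\cdot)<1$, $\gamma+\lambda_{\min}(\cdot)>0$, or $4/\max\{0,\rho\}$.
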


\begin{proof}
	We first show that \(\bY\succeq0\). If
	\(\bU\equiv0\), then \(\bX=0\), and the claim is
	immediate. Otherwise, we write the block decomposition of \(\bU\)
	with respect to \(\bcalH=\bOmega\oplus\bOmega^\perp\), as
	$
		\bU
		=
		\begin{pmatrix}
			\bU_{11} & \bU_{12}\\
			\bU_{21} & \bU_{22}
		\end{pmatrix}.
	$
	Noting that $\bOmega\subseteq\ker(\bX)$, then the block decomposition of $\bX$ is given by
	\begin{equation*}
		\bX
		=
		\bU/\bU_{11}
		=
		\begin{pmatrix}
			0 & 0\\
			0
			&
			\bU_{22}-\bU_{21}\bU_{11}^{-1}\bU_{12}
		\end{pmatrix},
	\end{equation*}
	where $\bU_{11}^{-1}$ is well-defined since $\bU_{11}$ is strongly monotone. 
	Consequently, the block decomposition of $\bY = \bU-\bX$ is given by 
	\begin{equation*}
		\bY = \bU-\bX
		=
		\begin{pmatrix}
			\bU_{11} & \bU_{12}\\
			\bU_{21}
			&
			\bU_{21}\bU_{11}^{-1}\bU_{12}
		\end{pmatrix}.
	\end{equation*}
	By \cref{lemma:schurcomplementlemma}(i), $
		\bY\succeq0 $ if and only if $\bY/ \bU_{11}\succeq 0$. The latter condition holds since $\bY / \bU_{11}= 0$.
	Therefore,
		$\bY
		\succeq0.$ This proves (i). 	
	
	To prove (ii), we first observe that
	\begin{equation}
		\bOmega\subseteq\ker(\bTheta_{\gamma}).
		\label{eq:Omega_in_ker(Theta)}
	\end{equation}
	In Case I, this follows from \cref{assume:blanket}(iii). In
	Case II, we have
	$\bL(\bOmega)\subseteq\bOmega^\perp$ by \cref{assume:blanket2}, while $\bQ=\bV/\bV_{22}$ vanishes on
	$\bOmega^\perp$. With these, $\bQ\bL$ vanishes on $\bOmega$, and
	thus $\bL^*\bQ\bL$ also vanishes on $\bOmega$. This proves
	\eqref{eq:Omega_in_ker(Theta)} in Case II.
	
	Denote
	$
		\bZ_{\gamma}
		\coloneqq
		\bTheta_{\gamma}-\gamma\bX,
$ so that $\widehat{\bXi}_{\gamma}
=
\bL_s-\bZ_{\gamma}.$
	By \eqref{eq:Omega_in_ker(Theta)} and
	$\bOmega\subseteq\ker(\bX)$, we have
	$
		\bOmega\subseteq\ker(\bZ_{\gamma}).
	$
	If $\gamma\in\Gamma{}$, then
	$
		\lambda_{\max}\left(
		(\bL_s^{1/2})^{\dagger}
		\bZ_{\gamma}
		(\bL_s^{1/2})^{\dagger}
		\right)<1.
	$
	It follows from \cref{lemma:beta_existence}(ii) and
	\cref{assume:L_s} that
	$
	\ker(\widehat{\bXi}_{\gamma})
	=
	\ker(\bL_s)
	=
	\bOmega,
	$
	and that $\widehat{\bXi}_{\gamma}$ is strongly monotone on
	$\bOmega^\perp$. Moreover, $\widehat{\bXi}_{\gamma}$ is
	self-adjoint. Since
	$
	\bXi_{\gamma}
	=
	\widehat{\bXi}_{\gamma}
	+\gamma(\bU-\bX)
	=
	\widehat{\bXi}_{\gamma}+\gamma \bY
	$
	and $\bY\succeq0$ by part~(i), we have
	$\widehat{\bXi}_{\gamma}\preceq\bXi_{\gamma}$.
	Hence, the claim follows from
	\cref{lemma:cocoercivityofPsigamma}.
	
	Finally, for (iii), we derive the explicit expressions for $\Gamma{}$. In
	Case I, $
		\bZ_{\gamma}
		=
		\frac{\gamma}{4}
		\left[
		(\bP-\bR^*)\bW(\bP^*-\bR)-4\bX
		\right].$
	Therefore,
	\begin{align*}
		&\lambda_{\max}\left(
		(\bL_s^{1/2})^{\dagger}
		\bZ_{\gamma}
		(\bL_s^{1/2})^{\dagger}
		\right)<1\\
		&\quad\Longleftrightarrow\quad
		\frac{\gamma}{4}
		\lambda_{\max}\left(
		(\bL_s^{1/2})^{\dagger}
		\left[
		(\bP-\bR^*)\bW(\bP^*-\bR)-4\bX
		\right]
		(\bL_s^{1/2})^{\dagger}
		\right)<1\\
		&\quad\Longleftrightarrow\quad
		\frac{\gamma\rho}{4}<1.
	\end{align*}
	If \(\rho\leq0\), the last inequality holds for every
	\(\gamma>0\). If \(\rho>0\), it is equivalent to
	\(0<\gamma<4/\rho\). This proves the expression for \(\Gamma{}\)
	in Case I.
	
	In Case II,
	\begin{equation*}
		\bZ_{\gamma}
		=
		-\frac{1}{\gamma}\bL^*\bQ\bL-\gamma\bX
		=
		-\frac{1}{\gamma}
		\left[
		\bL^*\bQ\bL+\gamma^2\bX
		\right].
	\end{equation*}
	Therefore,
	\begin{align*}
		&\lambda_{\max}\left(
		(\bL_s^{1/2})^{\dagger}
		\bZ_{\gamma}
		(\bL_s^{1/2})^{\dagger}
		\right)<1\\
		&\quad\Longleftrightarrow\quad
		-\frac{1}{\gamma}
		\lambda_{\min}\left(
		(\bL_s^{1/2})^{\dagger}
		\left[
		\gamma^2\bX+\bL^*\bQ\bL
		\right]
		(\bL_s^{1/2})^{\dagger}
		\right)<1\\
		&\quad\Longleftrightarrow\quad
		\gamma+
		\lambda_{\min}\left(
		(\bL_s^{1/2})^{\dagger}
		\left[
		\gamma^2\bX+\bL^*\bQ\bL
		\right]
		(\bL_s^{1/2})^{\dagger}
		\right)>0.
	\end{align*}
	Since \(\bQ=\bV/\bV_{22}\), this proves the expression for
	\(\Gamma{}\) in Case II.
\smartqedmark \end{proof}

\begin{remark}\label{rem:Gamma-star-discussion}
	The set $\Gamma{}$, or an estimate thereof, is summarized in \cref{tab:Gammastar}. In particular, $\Gamma{}$ is nonempty in all cases except possibly when $\bU/\bU_{11}\not\succeq 0$ and $\bV/\bV_{22}\not\succeq 0$. 	
	\begin{enumerate}		
		\item Consider the constants $\rho_1$, $\rho_2$, $\rho_3$ defined in \cref{tab:notation}. Since $\bW \succeq 0$, we have $\rho_1\geq 0$. On the other hand, since $\bOmega=\ker(\bL_s)=\ker(\bL_s^{1/2})=\ker\!\big((\bL_s^{1/2})^\dagger\big)$, it follows that $\rho_2\leq 0$ and $\rho_3\leq 0$.
		
		\item The three estimates in \cref{tab:Gammastar} (marked with $^\ddagger$) are obtained by reducing the relevant spectral conditions to scalar inequalities. Indeed, note that
		\[
		\lambda_{\min}\!\left( (\bL_s^{1/2})^\dagger\,[\gamma^2 \bX+\bL^*\bQ\bL]\,(\bL_s^{1/2})^\dagger \right)
		\;\ge\;
		\rho_2\gamma^2+\rho_3,
		\]
		and therefore $\gamma\in\Gamma{}$ whenever $\rho_2\gamma^2+\gamma+\rho_3>0$. Hence, a sufficient condition for $\Gamma{}$ to be nonempty when $\bU/\bU_{11}\not\succeq 0$ and $\bV/\bV_{22}\not\succeq 0$ is $\rho_2\rho_3<1/4$. 
	\end{enumerate}
\end{remark}

With the above result, we immediately obtain
\cref{thm:mainresult}(i), together with the additional
Fej\'er-monotonicity and rate statements below.

\begin{theorem}[Weak convergence of the main sequence]
	\label{thm:weakconvergence_zk}
	Suppose that \cref{assume:blanket,assume:preconditioner,assume:semimonotone_general} hold. Let $\Lambda$ and $\Gamma{}$ be given by \eqref{eq:Lambda} and \eqref{eq:Gamma*_proofsection}, respectively, and let $\gamma \in \Lambda\cap \Gamma{}$. Let $\widehat{\bXi}_{\gamma} $ be given by \eqref{eq:widehatXi_and_X}, and denote $\beta_\gamma \coloneqq \norm{\left(\widehat{\bXi}_{\gamma}^{1/2}\right)^\dagger\bM}^{-2}$. 
	If $\lambda_k\in [0,2\beta_\gamma]$ satisfies $\sum_{k=0}^{\infty} \lambda_k (2\beta_\gamma-\lambda_k) = +\infty$,  then the following hold: 
	\begin{enumerate}
		\item $\{\bz^k\}$ is Fej\'er monotone with respect to $\zer(\Psi_{\gamma})$ (and therefore bounded).
		\item $\{ \Psi_{\gamma}(\bz^k)\} $ converges strongly to zero.
		\item $\{ \bz^k\}$ converges weakly to some point $\bz^*\in \zer(\Psi_\gamma)$.
		\item If $\liminf_{k\to\infty}\lambda_k (2\beta_\gamma-\lambda_k) >0$, then $\norm{\Psi_{\gamma}(\bz^k)} = o (1/\sqrt{k})$.
	\end{enumerate}
\end{theorem}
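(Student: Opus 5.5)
The plan is to read \cref{algo:abstract_preconditioned_forward} as a relaxed forward (gradient-type) iteration $\bz^{k+1}=\bz^k-\lambda_k\Psi_\gamma(\bz^k)$ driven by a cocoercive single-valued operator, and then apply the standard Krasnosel'ski\u{\i}--Mann/forward-step analysis. By \cref{prop:Gammastar_cocoercivity}(ii), for $\gamma\in\Lambda\cap\Gamma{}$ the map $\Psi_\gamma$ is single-valued on all of $\bcalH'$ (since $\ran(\bM)=\bOmega^\perp\subseteq\dom\Phi_\gamma$) and $\beta_\gamma$-cocoercive. Moreover, $\zer(\Psi_\gamma)\neq\varnothing$: since \eqref{eq:productspace_recall} has a solution $\bx^\star$, \cref{prop:equivalence_MLauxiliary} yields $\bz^\star$ with $(\bx^\star,\bz^\star)$ solving \eqref{eq:generalized_ML}. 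Single-valuedness then forces $\bx^\star=\Phi_\gamma(\bM\bz^\star)$, so $\Psi_\gamma(\bz^\star)=\bM^*\bx^\star-\bb=0$.

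For (i), fix $\bz\in\zer(\Psi_\gamma)$ and expand
\[
\norm{\bz^{k+1}-\bz}^2=\norm{\bz^k-\bz}^2-2\lambda_k\inner{\Psi_\gamma(\bz^k)-\Psi_\gamma(\bz)}{\bz^k-\bz}+\lambda_k^2\norm{\Psi_\gamma(\bz^k)}^2 .
\]
Cocoercivity then gives
\[
\norm{\bz^{k+1}-\bz}^2\le\norm{\bz^k-\bz}^2-\lambda_k(2\beta_\gamma-\lambda_k)\norm{\Psi_\gamma(\bz^k)}^2 .
\]
Since $\lambda_k\in[0,2\beta_\gamma]$, the last term is nonpositive, which gives Fej\'er monotonicity and hence boundedness. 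Summing the same inequality gives $\sum_k\lambda_k(2\beta_\gamma-\lambda_k)\norm{\Psi_\gamma(\bz^k)}^2<\infty$.

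For (ii), I also need monotonicity of $\norm{\Psi_\gamma(\bz^k)}$. Equivalently, rescale and write $\bz^{k+1}=\bz^k+\mu_k(T\bz^k-\bz^k)$ with $T\coloneqq\Id-2\beta_\gamma\Psi_\gamma$ and $\mu_k\coloneqq\lambda_k/(2\beta_\gamma)\in[0,1]$. The map $T$ is nonexpansive because $\beta_\gamma\Psi_\gamma$ is firmly nonexpansive. For Krasnosel'ski\u{\i}--Mann iterations it is standard that $\norm{T\bz^k-\bz^k}$ is nonincreasing: one has $T\bz^{k+1}-\bz^{k+1}=(1-\mu_k)(T\bz^k-\bz^k)+(T\bz^{k+1}-T\bz^k)$, and the bound $\norm{T\bz^{k+1}-T\bz^k}\le\mu_k\norm{T\bz^k-\bz^k}$ closes it. Combined with $\sum_k\mu_k(1-\mu_k)=+\infty$ and the summability above, this yields $\Psi_\gamma(\bz^k)\to0$, as in \cite[Theorem 5.15]{BC17}.

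For (iii), I use Opial's lemma. Let $\bz^{k_j}\toweak\bar\bz$ be a weak cluster point. Since $\Psi_\gamma$ is cocoercive and continuous, it is maximally monotone (\cref{lemma:maximalmonotone_properties}(ii)), so its graph is sequentially closed in weak$\times$strong. Together with $\Psi_\gamma(\bz^{k_j})\to0$ this gives $\bar\bz\in\zer(\Psi_\gamma)$. Fej\'er monotonicity then gives weak convergence of the whole sequence \cite[Theorem 5.5]{BC17}. I could also invoke \cite[Theorem 26.14]{BC17} or \cite[Proposition 2.9]{BDP22} directly.

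For (iv), suppose $\liminf_k\lambda_k(2\beta_\gamma-\lambda_k)\ge c>0$ for $k\ge k_0$. Then $\sum_k\norm{\Psi_\gamma(\bz^k)}^2<\infty$. Since the terms $\norm{\Psi_\gamma(\bz^k)}^2$ are nonincreasing (from (ii), $\norm{T\bz^k-\bz^k}$ is nonincreasing), the usual tail argument applies: $(k/2)\norm{\Psi_\gamma(\bz^k)}^2\le\sum_{i=\lceil k/2\rceil}^{k}\norm{\Psi_\gamma(\bz^i)}^2\to0$, which gives $o(1/\sqrt k)$. The only delicate point I expect is verifying the monotonicity of $\norm{\Psi_\gamma(\bz^k)}$ with variable relaxation. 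I would handle it through the Krasnosel'ski\u{\i}--Mann identity above rather than directly.
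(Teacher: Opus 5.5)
Your proposal is correct and follows the paper's route. You obtain $\beta_\gamma$-cocoercivity of $\Psi_\gamma$ from \cref{prop:Gammastar_cocoercivity}(ii) and $\zer(\Psi_\gamma)\neq\varnothing$ from the solvability assumption and \cref{prop:equivalence_MLauxiliary}, then apply standard relaxed forward-step/Krasnosel'ski\u{\i}--Mann theory. The only difference is that the paper simply cites \cite[Propositions 2.9 and 3.3]{BDP22} for (i)--(iv), whereas you write out the Fej\'er inequality, the monotonicity of $\norm{T\bz^k-\bz^k}$, the Opial argument and the tail estimate for the $o(1/\sqrt{k})$ rate yourself, and you also make explicit the single-valuedness from $\gamma\in\Lambda$ in the nonemptiness step, which the paper leaves implicit.
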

\begin{proof}
	By the standing solvability assumption and
	\cref{prop:equivalence_MLauxiliary},
	$\zer(\Psi_{\gamma})\neq\varnothing$.
	Moreover, \cref{prop:Gammastar_cocoercivity} asserts that
	$\Psi_{\gamma}$ is $\beta_{\gamma}$-cocoercive.
	Then claims (i)--(iv) follow from combining
	\cite[Propositions 2.9 and 3.3]{BDP22}.
\smartqedmark \end{proof}

\subsection{Weak convergence of the shadow sequence}\label{sec:weak_x}

We now establish the weak convergence of $\{ \bx^k\}$ when $\bU\equiv  0$. To this end, we first recast $\bx = \Phi_{\gamma}(\bM \bz)$ as a monotone inclusion with a maximally monotone operator.

\begin{lemma}
	\label{lemma:maximalmonotonedecomposition}
	Suppose that \cref{assume:blanket}(ii)-(iv), \cref{assume:blanket2}, and
	\cref{assume:semimonotone_general} hold, and let $\gamma \in \Lambda$. Then
	\begin{equation}
		\bx = \Phi_{\gamma}(\bM \bz) \quad \Longleftrightarrow \quad  \bv (\bx,\bz) \in \mathbf{T}\bu(\bx,\bz),
		\label{eq:maximalmonotoneform}
	\end{equation}
	where
	\begin{enumerate}
		\item In Case I, $\mathbf{T}\coloneqq \left(\bA+\bP \bB\bR + \widetilde{\bU} \right)^{-1}$ with $\widetilde{\bU} \coloneqq \frac{1}{4}(\bP-\bR^*)\bW(\bP^* - \bR)$, and
		\begin{equation}
			\begin{array}{l}
				\bu(\bx,\bz) \coloneqq \frac{1}{\gamma}\underbrace{\big(\bM \bz - \Pi_{\bOmega^\perp} ( \bL \bx - \bq) + \gamma \Pi_{\bOmega^\perp} \widetilde{\bU} \Pi_{\bOmega^\perp}\bx\big)}_{\in \bOmega^\perp}
				- \frac{1}{\gamma} \Pi_{\bOmega}\bL \Pi_{\bOmega^\perp}(   \bx -\bar{\bx}),\\
				\bv(\bx,\bz) \coloneqq \bx .
			\end{array}
			\label{eq:uv_case1}
		\end{equation}
		\item In Case II, $\mathbf{T} \coloneqq \bA^{-1} - \widetilde{\bV} $ with $\widetilde{\bV} \coloneqq \bV - \bV_{22}$, and
		\begin{equation}
			\begin{array}{l}
				\bu(\bx,\bz)\coloneqq  \frac{1}{\gamma}(\underbrace{\bM\bz -  \Pi_{\bOmega^\perp} ( \bL \bx - \bq)}_{\in \bOmega^\perp} - \Pi_{\bOmega}\bL \Pi_{\bOmega^\perp}(   \bx -\bar{\bx})), \\
				\bv(\bx,\bz) \coloneqq \bx - \frac{1}{\gamma}\widetilde{\bV}(\underbrace{\bM\bz - \Pi_{\bOmega^\perp} ( \bL \bx - \bq)}_{\in \bOmega^\perp} - \Pi_{\bOmega}\bL \Pi_{\bOmega^\perp}(\bx-\bar{\bx}) ),
			\end{array}
			\label{eq:uv_case2}
		\end{equation}
	\end{enumerate}
	where  $\mathbf{T}$ is maximally monotone if $\bA$ is maximally $\bV$-comonotone. Moreover, there exists $c>0$ such that  
	\begin{equation}   
		\max\{ \norm{\Pi_{\bOmega}\bu(\bx,\bz)},  \norm{\Pi_{\bOmega^\perp}\left( \bv(\bx,\bz)-\bar{\bx}\right) }\} \leq c\,\norm{\Pi_{\bOmega^\perp}(\bx-\bar{\bx})}.
		\label{eq:bounds_uv_projections}
	\end{equation}
\end{lemma}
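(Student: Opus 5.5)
The plan is to start from the defining inclusion $\bM\bz+\bq-\gamma\bP\bB(\bR\bx)-\bL\bx\in\gamma\bA\bx$ and rewrite $\bL\bx-\bq$ through the decomposition $\bcalH=\bOmega\oplus\bOmega^\perp$. Write $\bx=\bar{\bx}+\Pi_{\bOmega}(\bx-\bar{\bx})+\Pi_{\bOmega^\perp}(\bx-\bar{\bx})$. By \cref{assume:blanket2}, $\bL\bar{\bx}-\bq\in\bOmega^\perp$ and $\bL\Pi_{\bOmega}(\bx-\bar{\bx})\in\bOmega^\perp$. Hence
\[
\Pi_{\bOmega}(\bL\bx-\bq)=\Pi_{\bOmega}\bL\Pi_{\bOmega^\perp}(\bx-\bar{\bx}),
\]
and therefore
\[
\bL\bx-\bq=\Pi_{\bOmega^\perp}(\bL\bx-\bq)+\Pi_{\bOmega}\bL\Pi_{\bOmega^\perp}(\bx-\bar{\bx}).
\]
Substituting, the inclusion reads $\gamma\bu_0\in\gamma\bA\bx+\gamma\bP\bB\bR\bx$, where
\[
\bu_0\coloneqq\tfrac1\gamma\bigl(\bM\bz-\Pi_{\bOmega^\perp}(\bL\bx-\bq)-\Pi_{\bOmega}\bL\Pi_{\bOmega^\perp}(\bx-\bar{\bx})\bigr).
\]
The first two terms of $\bu_0$ lie in $\bOmega^\perp$ by \cref{assume:M_kernel_range}, and the last lies in $\bOmega$. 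This gives the under-braces in \eqref{eq:uv_case1}--\eqref{eq:uv_case2}.

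\textbf{Case I.} Add $\widetilde{\bU}\bx$ to both sides to get $\bu_0+\widetilde{\bU}\bx\in(\bA+\bP\bB\bR+\widetilde{\bU})\bx$. By \cref{assume:blanket}(iii), $(\bP^*-\bR)$ vanishes on $\bOmega$, so $\widetilde{\bU}=\Pi_{\bOmega^\perp}\widetilde{\bU}\Pi_{\bOmega^\perp}$. Also $\widetilde{\bU}\bar{\bx}$ is a constant term. To match \eqref{eq:uv_case1} exactly, I would either absorb it by noting $\widetilde{\bU}\bx=\widetilde{\bU}\Pi_{\bOmega^\perp}\bx$, which is precisely the term written there, or check that the paper's $\bu$ uses $\Pi_{\bOmega^\perp}\bx$ rather than $\Pi_{\bOmega^\perp}(\bx-\bar{\bx})$. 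Inverting gives $\bx\in\mathbf{T}\bu$, and the converse follows by reversing the steps. For maximal monotonicity when $\bV=0$, $\bA$ is maximally monotone. The operator $\bP\bB\bR+\widetilde{\bU}$ is monotone by the computation \eqref{eq:PBR_monotone} and is continuous, hence maximally monotone with full domain by \cref{lemma:maximalmonotone_properties}(ii). The sum is then maximally monotone by (iv), and so is its inverse by (iii).

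\textbf{Case II.} Here $\bB=0$, so $\bu_0\in\bA\bx$, i.e.\ $\bx\in\bA^{-1}\bu_0$. Subtracting $\widetilde{\bV}\bu_0$ gives $\bx-\widetilde{\bV}\bu_0\in(\bA^{-1}-\widetilde{\bV})\bu_0$, which is exactly \eqref{eq:uv_case2} with $\bu=\bu_0$. For maximality, $\bV$-comonotonicity of $\bA$ means $\bA^{-1}$ is $\bV$-monotone, so $\bA^{-1}-\bV$ is maximally monotone (graph shift by a linear map). Since $\bV_{22}\succeq0$ is linear, bounded, and monotone, adding it preserves maximal monotonicity by (ii) and (iv). Thus $\mathbf{T}=(\bA^{-1}-\bV)+\bV_{22}$ is maximally monotone.

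\textbf{Bound \eqref{eq:bounds_uv_projections}.} We have $\Pi_{\bOmega}\bu=-\frac1\gamma\Pi_{\bOmega}\bL\Pi_{\bOmega^\perp}(\bx-\bar{\bx})$, which is bounded by $\frac{\norm{\bL}}{\gamma}\norm{\Pi_{\bOmega^\perp}(\bx-\bar{\bx})}$. In Case I, $\Pi_{\bOmega^\perp}(\bv-\bar{\bx})$ is immediate. In Case II, $\Pi_{\bOmega^\perp}(\bv-\bar{\bx})=\Pi_{\bOmega^\perp}(\bx-\bar{\bx})-\frac1\gamma\Pi_{\bOmega^\perp}\widetilde{\bV}\bu$. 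Since $\widetilde{\bV}=\bV_{11}+\bV_{12}+\bV_{21}$, we get $\Pi_{\bOmega^\perp}\widetilde{\bV}=\bV_{21}$, which acts only on $\Pi_{\bOmega}\bu$, already bounded above. This is the step I expect to be the main obstacle: the $\bOmega^\perp$-component of $\bu$ is not controlled by $\Pi_{\bOmega^\perp}(\bx-\bar{\bx})$, so everything hinges on $\widetilde{\bV}$ killing it from $\Pi_{\bOmega^\perp}$'s side. That is exactly why $\bV_{22}$ is removed.
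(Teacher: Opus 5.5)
Your proposal is correct and follows essentially the paper's own argument. It uses the same splitting of $\bL\bx-\bq$ into $\Pi_{\bOmega^\perp}(\bL\bx-\bq)+\Pi_{\bOmega}\bL\Pi_{\bOmega^\perp}(\bx-\bar{\bx})$, the same maximality arguments (the sum rule with the continuous monotone $\bP\bB\bR+\widetilde{\bU}$ in Case I, and $\mathbf{T}=(\bA^{-1}-\bV)+\bV_{22}$ in Case II), and the same key observation that $\Pi_{\bOmega^\perp}\widetilde{\bV}=\bV_{21}$ vanishes on $\bOmega^\perp$. Two cosmetic points: since $\bu$ already carries the factor $\frac{1}{\gamma}$, the Case II correction term is $-\Pi_{\bOmega^\perp}\widetilde{\bV}\bu$ rather than $-\frac{1}{\gamma}\Pi_{\bOmega^\perp}\widetilde{\bV}\bu$ (this only changes $c$), and no constant $\widetilde{\bU}\bar{\bx}$ term ever arises in Case I, because the statement uses $\widetilde{\bU}\bx=\Pi_{\bOmega^\perp}\widetilde{\bU}\Pi_{\bOmega^\perp}\bx$ directly.
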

\begin{proof}
	We first note the following identities. Using the decomposition w.r.t. $\bOmega \oplus \bOmega^\perp$ and \cref{assume:blanket2}, we have  \begin{align*}
		\bL \bx - \bq
		& = \Pi_{\bOmega} (\bL \Pi_{\bOmega} \bx + \bL \Pi_{\bOmega^\perp} \bx - \bq) + \Pi_{\bOmega^\perp} ( \bL  \bx-\bq)\notag  \\ 
		& = \Pi_{\bOmega}( \bL \Pi_{\bOmega^\perp} \bx -\bq) + \Pi_{\bOmega^\perp} ( \bL \bx - \bq) \\
		& = \Pi_{\bOmega}( \bL \Pi_{\bOmega^\perp} \bx -\bL\bar{\bx}) + \Pi_{\bOmega^\perp} ( \bL  \bx - \bq)  \\
		& = \Pi_{\bOmega}( \bL \Pi_{\bOmega^\perp} \bx -\bL\Pi_{\bOmega^\perp}\bar{\bx}) + \Pi_{\bOmega^\perp} ( \bL  \bx - \bq), \label{eq:Lx}
	\end{align*}
	where the second equation holds since $\bL\Pi_{\bOmega}\bx\in \bOmega^\perp$,  the third equation holds since $\bL\bar{\bx} - \bq \in \bOmega^\perp$, and the last holds since $\bL\Pi_{\bOmega}\bar{\bx}\in \bOmega^\perp$. Similarly, we have $\widetilde{\bU} =\Pi_{\bOmega^\perp} \widetilde{\bU} \Pi_{\bOmega^\perp}$ by \cref{assume:blanket}(iii). With these identities, the equivalence \eqref{eq:maximalmonotoneform} follows by rearranging the terms of the inclusion $\bM \bz +\bq \in \gamma \bA \bx + \gamma \bP\bB (\bR \bx) + \bL \bx$ to isolate the operator $\mathbf{T}$. We now prove the rest of the claim for each case.
	
	\noindent\textit{Case I.} 
	$\bP\bB\bR + \widetilde{\bU}$ is monotone by \eqref{eq:PBR_monotone}; therefore, since it is continuous, it is maximally monotone (with full domain) by \cref{lemma:maximalmonotone_properties}(ii). Consequently, $\mathbf{T}=(\bA+\bP\bB\bR + \widetilde{\bU})^{-1}$ is maximally monotone by \cref{lemma:maximalmonotone_properties}(iii)--(iv). Meanwhile, since $ \Pi_{\bOmega}(\bu(\bx,\bz)) = -\frac{1}{\gamma}  \Pi_{\bOmega}\bL \Pi_{\bOmega^\perp} (\bx-\bar{\bx})$, \eqref{eq:bounds_uv_projections} holds with $c \coloneqq \max\{ 1, \frac{1}{\gamma}\norm{\bL}\}$. 
	
	\medskip
	\noindent\textit{Case II.} Since $\widetilde{\bV} = \bV - \Pi_{\bOmega^\perp} \bV \Pi_{\bOmega^\perp} $, then $\bV - \widetilde{\bV} = \bV_{22} \succeq 0$. Thus, 
	$$\mathbf{T} = \bA^{-1} -\widetilde{\bV} = (\bA^{-1} - \bV ) + (\bV - \widetilde{\bV})$$ 
	is maximally monotone by \cref{lemma:V-comonotone} and \cref{lemma:maximalmonotone_properties}(iv).  To prove \eqref{eq:bounds_uv_projections}, note that 
	\begin{align*}
		\Pi_{\bOmega}(\bu(\bx,\bz)) & = 
		-\frac{1}{\gamma} \Pi_{\bOmega} \bL \Pi_{\bOmega^\perp}(\bx-\bar{\bx}) \quad \text{and} \\
		\Pi_{\bOmega^\perp} (\bv(\bx,\bz)-\bar{\bx}) & = 
		\Pi_{\bOmega^\perp}(\bx-\bar{\bx}) + \frac{1}{\gamma}\Pi_{\bOmega^\perp}\widetilde{\bV}\Pi_{\bOmega}\bL\Pi_{\bOmega^\perp}(\bx-\bar{\bx}) ,
	\end{align*}
	where the second identity holds since $\bOmega^\perp \subseteq \ker(\Pi_{\bOmega^\perp}\widetilde{\bV})$. Hence, \eqref{eq:bounds_uv_projections} holds with $c\coloneqq \max\{\frac{1}{\gamma}\norm{\bL},1+\frac{1}{\gamma}\norm{\widetilde{\bV}\bL} \}$. This completes the proof.
\smartqedmark \end{proof}

We now establish the convergence of  $\{ \bx^k\}$ in the following theorem, which proves \cref{thm:mainresult}(ii).

\begin{theorem}[Weak convergence of the shadow sequence when $\bU\equiv 0$]
	\label{thm:mainconvergence_U=0}
	Suppose that the hypotheses of \cref{thm:weakconvergence_zk} hold. Moreover, assume that $\bU=0$ and $\bA$ is maximally $\bV$-comonotone. If $\{\bx^k\}$ is bounded, then there exists a solution $\bx^*$ of \eqref{eq:productspace_recall} such that $\bx^k\toweak \bx^*$. 
\end{theorem}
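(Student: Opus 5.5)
The plan is a standard weak-cluster-point argument combined with a uniqueness step. I use \cref{lemma:maximalmonotonedecomposition} to move the iterates into a maximally monotone inclusion, and \cref{lemma:demiclosednessprinciple} to pass to weak limits.

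First, by \cref{thm:weakconvergence_zk}, $\bz^k\toweak\bz^*\in\zer(\Psi_\gamma)$ and $\bM^*\bx^k-\bb\to0$. Since $\ran(\bM)=\bOmega^\perp$ is closed, $\bM^*$ restricted to $\bOmega^\perp$ is bounded below. Writing $\bM^*\bx^k-\bb=\bM^*(\bx^k-\bar\bx)=\bM^*\Pi_{\bOmega^\perp}(\bx^k-\bar\bx)$ then gives $\Pi_{\bOmega^\perp}(\bx^k-\bar\bx)\to0$. Set $\bu^k=\bu(\bx^k,\bz^k)$ and $\bv^k=\bv(\bx^k,\bz^k)$. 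By \eqref{eq:maximalmonotoneform}, $\bv^k\in\mathbf T\bu^k$, where $\mathbf T$ is maximally monotone because $\bA$ is maximally $\bV$-comonotone. By \eqref{eq:bounds_uv_projections}, $\Pi_\bOmega\bu^k\to0$ and $\Pi_{\bOmega^\perp}(\bv^k-\bar\bx)\to0$.

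Next, take a weak cluster point $\bx^*$ of the bounded sequence $\{\bx^k\}$, along a subsequence $k_j$. Because $\bz^k\toweak\bz^*$, $\bx^{k_j}\toweak\bx^*$, and all maps entering $\bu,\bv$ are bounded linear (this uses $\bU=0$, so no nonlinear terms appear), we get $\bu^{k_j}\toweak\bu^*$ and $\bv^{k_j}\toweak\bv^*$. Here $\bu^*$ and $\bv^*$ are the same affine expressions evaluated at $(\bx^*,\bz^*)$. \Cref{lemma:demiclosednessprinciple} then yields $\bv^*\in\mathbf T\bu^*$.

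Weak lower semicontinuity of the norm gives $\Pi_{\bOmega^\perp}(\bx^*-\bar\bx)=0$, i.e.\ $\bx^*\in\hatOmega$. Reading \eqref{eq:maximalmonotoneform} backwards shows $\bx^*=\Phi_\gamma(\bM\bz^*)$. This step needs care: the equivalence was proved for arbitrary $(\bx,\bz)$ by pure rearrangement, so it applies to the limit pair. Then $(\bx^*,\bz^*)$ solves \eqref{eq:generalized_ML}, and by \cref{prop:equivalence_MLauxiliary} $\bx^*$ solves \eqref{eq:productspace_recall}.

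Finally, for uniqueness, every weak cluster point equals $\Phi_\gamma(\bM\bz^*)$. This point is unique since $\gamma\in\Lambda$ makes $\Phi_\gamma$ single-valued on $\bOmega^\perp$. So the bounded sequence $\{\bx^k\}$ has a single weak cluster point and therefore $\bx^k\toweak\bx^*$.

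The main obstacle is checking the weak convergence of $\bu^k,\bv^k$ and the identification of the limit. In Case I, $\bu$ involves $\Pi_{\bOmega^\perp}\bL\bx$ and $\Pi_{\bOmega^\perp}\widetilde\bU\Pi_{\bOmega^\perp}\bx$. These are linear in $\bx$ and weakly continuous, while the nonlinear $\bP\bB\bR$ is absorbed into $\mathbf T$. The key is therefore that $\bu$ and $\bv$ are affine in $(\bx,\bz)$, which holds in both cases as written.
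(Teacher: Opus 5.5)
Your proposal is correct and follows the paper's proof essentially step for step. You extract a weakly convergent subsequence of $\{\bx^k\}$, rewrite $\bx^k=\Phi_\gamma(\bM\bz^k)$ as $\bv^k\in\mathbf T\bu^k$ via \cref{lemma:maximalmonotonedecomposition}, pass to the limit with \cref{lemma:demiclosednessprinciple} using \eqref{eq:bounds_uv_projections}, and conclude from single-valuedness of $\Phi_\gamma$ (since $\gamma\in\Lambda$) that every weak cluster point equals $\Phi_\gamma(\bM\bz^*)$. One small misattribution: $\bu$ and $\bv$ are affine in $(\bx,\bz)$ regardless of $\bU$, and what $\bU=0$ together with maximal $\bV$-comonotonicity of $\bA$ actually buys is the maximal monotonicity of $\mathbf T$.
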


\begin{proof}
Since $\{\bx^k\}$ is bounded, we can let
$\{\bx^{k_j}\}$ be an arbitrary weakly convergent subsequence, say
$\bx^{k_j}\toweak\bx^*$.
	Since $\Psi_{\gamma}(\bz^k)=\bM^*\bx^k - \bb \to 0$ by \cref{thm:weakconvergence_zk}(ii), it follows that $\bx^* \in \hatOmega$ and in addition, $\Pi_{\bOmega^\perp}(\bx^k - \bar{\bx})\to 0$ by \cref{lemma:Pi_omegaperp_xk}. 
	On the other hand, by \cref{thm:weakconvergence_zk}(iii), there exists $\bz^*\in\zer(\Psi_{\gamma})$ such that $\bz^{k}\toweak \bz^*$, and hence $\bz^{k_j}\toweak \bz^*$.
	
	By \cref{lemma:maximalmonotonedecomposition}, the relation $\bx^k=\Phi_{\gamma}(\bM\bz^k)$ is equivalent to $\bv^k\in\mathbf{T}\bu^k$, where $\bu^k = \bu(\bx^k,\bz^k)$ and $\bv^k = \bv(\bx^k,\bz^k)$, with $(\bu,\bv)$ given in \eqref{eq:uv_case1} and \eqref{eq:uv_case2}.
	Since affine maps with bounded linear parts are weakly continuous, $\bu^{k_j} \toweak \bu(\bx^*,\bz^*)$ and $\bv^{k_j}\toweak \bv(\bx^*,\bz^*)$.
	Furthermore, $\Pi_{\bOmega}\bu^k\to0$ and $ \Pi_{\bOmega^\perp}(\bv^k-\bar{\bx}) \to0$ by \eqref{eq:bounds_uv_projections}, noting that $\Pi_{\bOmega^\perp}(\bx^k-\bar{\bx})\to 0$. Since $\mathbf{T}$ is maximally monotone by \cref{lemma:maximalmonotonedecomposition}, we conclude from  \cref{lemma:demiclosednessprinciple} that $\bv(\bx^*,\bz^*) \in \mathbf{T} \bu(\bx^*,\bz^*)$, and therefore $\bx^*=\Phi_{\gamma}(\bM\bz^*)$ by invoking again \cref{lemma:maximalmonotonedecomposition}. Since $\gamma\in\Lambda$, the point
	$\Phi_{\gamma}(\bM\bz^*)$ is unique. Hence, as the weakly
	convergent subsequence was arbitrary, every weak cluster point of
	$\{\bx^k\}$ coincides with $\Phi_{\gamma}(\bM\bz^*)$.
	Consequently, $\bx^k\toweak\bx^*$.
\smartqedmark \end{proof}

\subsection{Strong convergence of the shadow sequence}\label{sec:strong_x}

Finally, we prove that $\{\bx^k\}$ is strongly convergent when $\bU$ is strongly monotone on $\bOmega$, which is precisely the claim of \cref{thm:mainresult}(iii). 
\begin{theorem}[Strong convergence of the shadow sequence when $\bU$ is strongly monotone on $\bOmega$]\label{thm:mainconvergence_U>0}
	Suppose that the hypotheses of \cref{thm:weakconvergence_zk} hold. If $\bU$ is strongly monotone on $\bOmega$, then there exists a solution $\bx^*$ of \eqref{eq:productspace_recall} such that $\bx^k\to \bx^*$.
\end{theorem}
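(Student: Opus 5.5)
The plan is to combine the fundamental inequality of \cref{lemma:fundamentalinequality}, applied with the comparison point $\bz'=\bz^*$, with the splitting $\bXi_\gamma=\widehat{\bXi}_\gamma+\gamma\bY$ from the proof of \cref{prop:Gammastar_cocoercivity}. The component of $\bx^k-\bx^*$ in $\bOmega^\perp$ vanishes because $\bM^*\bx^k\to\bb$. The component in $\bOmega$ is controlled by $\bY$, which on $\bOmega$ coincides with $\bU_{11}$.

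\textbf{Step 1 (the limit point is a solution).} By \cref{thm:weakconvergence_zk}, $\bz^k\toweak\bz^*\in\zer(\Psi_\gamma)$, $\{\bz^k\}$ is bounded, and $\Psi_\gamma(\bz^k)=\bM^*\bx^k-\bb\to0$. Since $\gamma\in\Lambda$, the point $\bx^*\coloneqq\Phi_\gamma(\bM\bz^*)$ is well defined and unique. Because $\Psi_\gamma(\bz^*)=0$, we have $\bM^*\bx^*=\bb$, so $(\bx^*,\bz^*)$ solves \eqref{eq:generalized_ML}. By \cref{prop:equivalence_MLauxiliary}, $\bx^*$ is therefore a solution of \eqref{eq:productspace_recall}.

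\textbf{Step 2 (the key inequality).} Set $\Delta\bx^k\coloneqq\bx^k-\bx^*$. \cref{lemma:fundamentalinequality} with $\bz=\bz^k$, $\bz'=\bz^*$ gives
\[
\inner{\bM^*\bx^k-\bb}{\bz^k-\bz^*}\ \ge\ \inner{\bXi_\gamma\Delta\bx^k}{\Delta\bx^k}
=\inner{\widehat{\bXi}_\gamma\Delta\bx^k}{\Delta\bx^k}+\gamma\inner{\bY\Delta\bx^k}{\Delta\bx^k}.
\]
Here I used $\bM^*\bx^*=\bb$ and $\bXi_\gamma=\widehat{\bXi}_\gamma+\gamma\bY$.
\begin{itemize}
\item The left-hand side tends to $0$, because $\{\bz^k\}$ is bounded and $\bM^*\bx^k-\bb\to0$.
\item Both terms on the right are nonnegative. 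Indeed, $\widehat{\bXi}_\gamma\succeq0$ for $\gamma\in\Gamma{}$, as shown in the proof of \cref{prop:Gammastar_cocoercivity}, and $\bY\succeq0$ by \cref{prop:Gammastar_cocoercivity}(i).
\end{itemize}
Hence $\inner{\bY\Delta\bx^k}{\Delta\bx^k}\to0$, which means $\bY^{1/2}\Delta\bx^k\to0$.

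\textbf{Step 3 (decomposing $\Delta\bx^k$).} Write $\Delta\bx^k=\ba^k+\bb^k$ with $\ba^k\in\bOmega$ and $\bb^k\in\bOmega^\perp$.
\begin{itemize}
\item \emph{The $\bOmega^\perp$ part.} Since $\bx^*\in\hatOmega$, we have $\bb^k=\Pi_{\bOmega^\perp}(\bx^k-\bar{\bx})$. This tends to $0$ by \cref{lemma:Pi_omegaperp_xk}, because $\bM^*\bx^k-\bb\to0$ and $\ran(\bM)=\bOmega^\perp$. Alternatively, one can use that $\widehat{\bXi}_\gamma$ has kernel $\bOmega$ and is strongly monotone on $\bOmega^\perp$.
\item \emph{The $\bOmega$ part.} We have $\bY^{1/2}\ba^k=\bY^{1/2}\Delta\bx^k-\bY^{1/2}\bb^k\to0$. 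From the block form computed in \cref{prop:Gammastar_cocoercivity}, $\bY_{11}=\bU_{11}$. Therefore
\[
\inner{\bU\ba^k}{\ba^k}=\inner{\bU_{11}\ba^k}{\ba^k}=\inner{\bY\ba^k}{\ba^k}=\|\bY^{1/2}\ba^k\|^2\to0.
\]
Strong monotonicity of $\bU$ on $\bOmega$ then forces $\ba^k\to0$.
\end{itemize}
Consequently $\bx^k\to\bx^*$ strongly.

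\textbf{Main obstacle.} The delicate point is that $\bU$ itself need not be monotone, so $\inner{\bU\Delta\bx^k}{\Delta\bx^k}$ cannot be used directly. The argument relies on passing to $\bY=\bU-\bX\succeq0$, where the Schur-complement part $\bX$ has already been absorbed into $\widehat{\bXi}_\gamma$. One must also check that $\bY$ agrees with $\bU$ on $\bOmega$, and that the cross term involving $\bb^k$ vanishes; both follow from the block computation and from $\bb^k\to0$. I would verify these two points first.
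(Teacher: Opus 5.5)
Your proposal is correct and follows essentially the same route as the paper. You apply the fundamental inequality at $\bz'=\bz^*$ with the splitting $\bXi_\gamma=\widehat{\bXi}_\gamma+\gamma\bY$ to get $\bY^{1/2}(\bx^k-\bx^*)\to0$, kill the $\bOmega^\perp$ component via \cref{lemma:Pi_omegaperp_xk}, and use $\bY=\bU$ on $\bOmega$ to conclude; the paper does exactly this, apart from the cosmetic issue that calling the $\bOmega^\perp$ component $\bb^k$ clashes with the vector $\bb$ in the affine constraint.
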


\begin{proof}
	Let $\bz^*\in\zer(\Psi_\gamma)$ and set
	$\bx^*\coloneqq\Phi_\gamma(\bM\bz^*)$. Then $\bx^*$ solves
	\eqref{eq:productspace_recall} by
	\cref{prop:equivalence_MLauxiliary}. Meanwhile, recall that $\bU=\bX+\bY$, where
	$\bX=\bU/\bU_{11}$ and $\bY\succeq0$ (see \cref{prop:Gammastar_cocoercivity}(i)). Moreover,
	$\bXi_\gamma=\widehat{\bXi}_\gamma+\gamma\bY$, with
	$\widehat{\bXi}_\gamma\succeq\beta_\gamma\bM\bM^*$.
	Hence, by \cref{lemma:fundamentalinequality},
	\begin{align*}
		0
		&\leq
		\gamma\inner{\bY(\bx^k-\bx^*)}{\bx^k-\bx^*} \\
		&\leq
		\inner{\bXi_\gamma(\bx^k-\bx^*)}{\bx^k-\bx^*}
		\leq
		\inner{\Psi_\gamma(\bz^k)}{\bz^k-\bz^*}.
	\end{align*}
	Since $\{\bz^k\}$ is bounded by
	\cref{thm:weakconvergence_zk}(i) and
	$\Psi_\gamma(\bz^k)\to0$ by
	\cref{thm:weakconvergence_zk}(ii), the right-hand side converges
	to zero. Therefore,
	$\inner{\bY(\bx^k-\bx^*)}{\bx^k-\bx^*}\to0$, or equivalently,
	$\bY^{1/2}(\bx^k-\bx^*)\to0$.
	
	On the other hand,
	$\Psi_\gamma(\bz^k)=\bM^*\bx^k-\bb\to0$ and
	$\bx^*\in\hatOmega$. Thus,
	$\Pi_{\bOmega^\perp}(\bx^k-\bx^*)\to0$ by
	\cref{lemma:Pi_omegaperp_xk}. Since $\bY^{1/2}$ is bounded,
	$\bY^{1/2}\Pi_{\bOmega^\perp}(\bx^k-\bx^*)\to0$. Consequently,
	\[
	\bY^{1/2}\Pi_{\bOmega}(\bx^k-\bx^*)
	=
	\bY^{1/2}(\bx^k-\bx^*)
	-
	\bY^{1/2}\Pi_{\bOmega^\perp}(\bx^k-\bx^*)
	\to0.
	\]
	Finally, $\bOmega\subseteq\ker(\bX)$, so $\bY=\bU$ on
	$\bOmega$. Since $\bU$ is strongly monotone on $\bOmega$, there
	exists $\alpha>0$ such that
	\[
	\alpha\norm{\Pi_{\bOmega}(\bx^k-\bx^*)}^2
	\leq
	\inner{
		\bY\Pi_{\bOmega}(\bx^k-\bx^*)
	}{
		\Pi_{\bOmega}(\bx^k-\bx^*)
	}
	=
	\norm{
		\bY^{1/2}\Pi_{\bOmega}(\bx^k-\bx^*)
	}^2
	\to0.
	\]
	Hence, $\Pi_{\bOmega}(\bx^k-\bx^*)\to0$. Together with
	$\Pi_{\bOmega^\perp}(\bx^k-\bx^*)\to0$, this gives
	$\bx^k\to\bx^*$.
\smartqedmark \end{proof}

\section{Concluding remarks}

We developed a unified framework for solving inclusion problems involving a backward-accessible set-valued operator, a forward-accessible single-valued operator, and an affine constraint. The framework also offers flexibility in tailoring the resulting splitting scheme to the structure of the problem. In particular, it allows general bounded linear coefficient operators and preconditioned resolvents, going beyond standard scalar-parameter constructions. Under suitable semimonotonicity and cocoercivity assumptions, we established weak convergence of the main sequence and, under additional conditions, weak or strong convergence of the corresponding shadow sequence to a solution of the inclusion problem. Several existing methods also arise as special cases of the proposed framework and are extended to broader settings. In several important cases, the corresponding convergence conditions are weaker and the admissible parameter ranges are larger than those available in closely related works. A key feature of the analysis is its reliance on the orthogonal decomposition $\bcalH =\bOmega\oplus\bOmega^\perp$, which yields a modular proof strategy and avoids algorithm-specific componentwise arguments. It would be interesting to investigate whether our approach can be extended to settings in which the forward operator is merely monotone and Lipschitz continuous.

\paragraph{Acknowledgements.} This work was initiated during MND's 2026 visit to the University of Tokyo, whose hospitality he gratefully acknowledges. MND was partially supported by the Australian Research Council (ARC) under Discovery Project DP230101749. AT was supported by the Grant-in-Aid for Scientific Research (B), JSPS, under Grant No. 23K28041.

\appendix 

\section*{Appendix}
\crefalias{section}{appendix} 
\crefalias{subsection}{appendix}
\renewcommand{\thesection}{\Alph{section}}

\section{Auxiliary results on bounded linear operators}

\subsection{Main tools for the convergence analysis of the general algorithm}

The results presented in this section are the key technical lemmas used in the proofs in \cref{sec:proof_mainresult}. 
First, we calculate a lower bound for a quadratic program, as used in \cref{lemma:fundamentalinequality}. 
\begin{lemma}
	\label{lemma:QP}
	Let $V\in \mathcal{B}(\H)$ be self-adjoint and $\Omega \subseteq \H$ be a closed linear subspace. Let $V=(V_{ij})$ denote the block decomposition of $V$ with respect to $\Omega$ as in \eqref{eq:2x2decomposition}. If  $\ran (V_{22})$ is closed, $\ran (V_{21})\subseteq \ran(V_{22})$, and $V_{22}\succeq 0$, then for any $x\in \H$, $\min_{y+x\in \Omega^{\perp}}  \inner{y}{Vy} = \inner{x}{Qx}$, where $Q  = V/V_{22} = V_{11} - V_{12} V_{22}^{\dagger}V_{21}$. 
\end{lemma}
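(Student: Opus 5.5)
The plan is to decompose everything along $\H=\Omega\oplus\Omega^\perp$ and reduce the minimization to completing a square in the $\Omega^\perp$-component. The constraint $y+x\in\Omega^\perp$ fixes the $\Omega$-component of $y$: writing $x_1\coloneqq\Pi_\Omega x$, every feasible $y$ has the form $y=-x_1+w$ with $w\in\Omega^\perp$ free. Since $V$ is self-adjoint (so $V_{12}=V_{21}^*$), expanding gives
\[
\inner{y}{Vy}=\inner{x_1}{V_{11}x_1}-2\inner{w}{V_{21}x_1}+\inner{w}{V_{22}w},\qquad w\in\Omega^\perp .
\]
The problem is therefore a quadratic minimization in $w$ over $\Omega^\perp$, with $V_{22}\succeq0$ acting on $\Omega^\perp$ and $\ran(V_{22})\subseteq\Omega^\perp$.

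First, I use the range inclusion $\ran(V_{21})\subseteq\ran(V_{22})$. Together with closedness of $\ran(V_{22})$, this gives that $V_{22}^\dagger$ is bounded and $V_{22}V_{22}^\dagger V_{21}=\Pi_{\ran(V_{22})}V_{21}=V_{21}$. Setting $w_0\coloneqq V_{22}^\dagger V_{21}x_1\in\Omega^\perp$, I complete the square:
\[
\inner{w}{V_{22}w}-2\inner{w}{V_{22}w_0}=\inner{w-w_0}{V_{22}(w-w_0)}-\inner{w_0}{V_{22}w_0}.
\]
Positivity $V_{22}\succeq0$ shows the first term is nonnegative, and it vanishes at $w=w_0$. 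So the minimum exists, is attained at $y^*=-x_1+w_0$, and equals $\inner{x_1}{V_{11}x_1}-\inner{w_0}{V_{22}w_0}$.

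Next, I simplify this value. Using $V_{22}^\dagger V_{22}V_{22}^\dagger=V_{22}^\dagger$ and self-adjointness of $V_{22}^\dagger$, the correction term becomes $\inner{w_0}{V_{22}w_0}=\inner{x_1}{V_{12}V_{22}^\dagger V_{21}x_1}$. The minimum is therefore $\inner{x_1}{(V/V_{22})x_1}$. Finally, $V_{11}$, $V_{12}$ and $V_{21}$ are defined with projections that vanish on the complementary subspaces, so $Q=V/V_{22}$ vanishes on $\Omega^\perp$ and has range in $\Omega$. Hence $\inner{x_1}{Qx_1}=\inner{x}{Qx}$, which is the claimed value.

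The main obstacle is the bookkeeping for the pseudoinverse in infinite dimensions. I need three facts: that $V_{22}^\dagger$ is bounded, that it is self-adjoint (from self-adjointness of $V_{22}$ and closed range), and that $V_{22}V_{22}^\dagger=\Pi_{\ran V_{22}}$ (the closed-range property quoted in the preliminaries). Here $V_{22}$ is read as an operator on $\Omega^\perp$ extended by zero on $\Omega$, per the convention stated after \eqref{eq:2x2decomposition}. Once these are in place, the rest is the square completion above.
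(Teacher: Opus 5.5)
Your proof is correct and follows essentially the same route as the paper. The paper also substitutes $z=y+\Pi_{\Omega}x\in\Omega^\perp$, reduces the problem to minimizing $\inner{z}{V_{22}z}-2\inner{V_{21}x}{z}+\inner{x}{V_{11}x}$ over $\Omega^\perp$, and takes the minimizer $z^*=V_{22}^\dagger V_{21}x$ supplied by the range inclusion; your explicit completion of the square (via $V_{22}V_{22}^\dagger V_{21}=V_{21}$) simply writes out the optimality step that the paper states through the first-order condition $V_{22}z^*=V_{21}x$.
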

\begin{proof}
	We  change the variable to $z \coloneqq  y+ \Pi_{\Omega}x$. Note that  $y+x =z +\Pi_{\Omega^\perp}x\in \Omega^{\perp}$ if and only if $z\in \Omega^{\perp}$. Hence, for any $y $ such that $y+x\in \Omega^{\perp}$, we have
	\begin{align*}
		\inner{y}{Vy} & = \inner{z -\Pi_{\Omega}x}{Vz -V\Pi_{\Omega}x} \\
		& = \inner{z}{Vz} - 2\inner{\Pi_{\Omega}x}{Vz} + \inner{\Pi_{\Omega}x}{V\Pi_{\Omega}x} \\
		& = \inner{\Pi_{\Omega^{\perp}}z}{ V \Pi_{\Omega^\perp}z} - 2\inner{\Pi_{\Omega}x}{V\Pi_{\Omega^\perp}z} +  \inner{x}{\Pi_{\Omega}V\Pi_{\Omega}x} \\
		& = g(z), 
	\end{align*}
	where $g(z) \coloneqq \inner{z}{ V_{22} z} - 2\inner{V_{21}x}{z} +  \inner{x}{V_{11}x}$ is a function with domain $\Omega^{\perp}$. Thus, minimizing $\inner{y}{Vy}$ over $\{ y: y+x\in \Omega^{\perp}\}$ is equivalent to the (unconstrained) optimization $\min_{z\in\Omega^\perp} g(z)$. Since $V_{22}\succeq 0$, the minimizer of $g$ is any point $z^*$ such that $V_{22}z^* = V_{21}x$, which has a solution by the range assumption. In particular, $z^* = (V_{22})^{\dagger}V_{21}x$ is an optimal solution. The claim
	follows by calculating $g(z^*)$.
\smartqedmark \end{proof}

The next result provides a sufficient condition when $A\succeq B$ under circumstances encountered in \cref{sec:weak_z}. 

\begin{lemma}
	\label{lemma:beta_existence}
	Let $A,B\in \mathcal{B}(\H)$ be self-adjoint operators such that $\ker(A)\subseteq \ker(B)$ and $A$ is strongly monotone on $\ker(A)^\perp$. 
	\begin{enumerate}
		\item If $\lambda_{\max} ((A^{1/2})^{\dagger} B (A^{1/2})^{\dagger}) \leq 1$, then $A-B \succeq 0$. In particular, if $M\in \mathcal{B}(\K,\H)$ and $\tau >0$ such that $\ker(A) \subseteq \ker(M^*)$ and $\tau \norm{(A^{1/2})^\dagger M}^2 \leq 1$, then $A-\tau MM^*\succeq 0$. 
		\item If $\lambda_{\max} ((A^{1/2})^{\dagger} B (A^{1/2})^{\dagger}) < 1$, then $\ker(A-B) = \ker (A)$ and $A-B$ is strongly monotone on $\ker(A)^\perp = \ker(A-B)^\perp$. 
	\end{enumerate}
\end{lemma}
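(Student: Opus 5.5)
Throughout, set $\Omega_0 \coloneqq \ker(A)$ and write $C\coloneqq (A^{1/2})^\dagger$. I would work with the decomposition $\H=\Omega_0\oplus\Omega_0^\perp$. Since $A$ is self-adjoint and strongly monotone on $\Omega_0^\perp$, it is monotone, so $A^{1/2}$ exists. Moreover, $\ker(A^{1/2})=\Omega_0$, and $A^{1/2}$ restricted to $\Omega_0^\perp$ is a bijection of $\Omega_0^\perp$ onto itself. Indeed, $A$ restricted to $\Omega_0^\perp$ is strongly monotone, hence invertible there, so its square root is invertible on $\Omega_0^\perp$ as well. In particular $\ran(A^{1/2})=\Omega_0^\perp$ is closed, and $C$ is the bounded operator that inverts $A^{1/2}$ on $\Omega_0^\perp$ and vanishes on $\Omega_0$. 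This gives $A^{1/2}C=CA^{1/2}=\Pi_{\Omega_0^\perp}$.

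\textbf{Step 1 (the key identity).} From $\ker(A)\subseteq\ker(B)$ and self-adjointness of $B$ we get $\ran(B)\subseteq \Omega_0^\perp$. Hence $B=\Pi_{\Omega_0^\perp}B\Pi_{\Omega_0^\perp}$, and therefore
\[
B=A^{1/2}(CBC)A^{1/2}.
\]
It follows that for every $x\in\H$,
\[
\inner{(A-B)x}{x}=\norm{A^{1/2}x}^2-\inner{CBC\,A^{1/2}x}{A^{1/2}x}\geq (1-\lambda)\norm{A^{1/2}x}^2,
\]
where $\lambda\coloneqq\lambda_{\max}(CBC)$. The operator $CBC$ is self-adjoint and bounded, so $\lambda$ is finite.

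\textbf{Step 2 (part (i)).} If $\lambda\leq 1$, the bound in Step 1 gives $A-B\succeq 0$ directly. For the particular statement, take $B=\tau MM^*$; it is self-adjoint, and $\ker(A)\subseteq\ker(M^*)\subseteq\ker(MM^*)$. We then compute
\[
\lambda_{\max}(C\tau MM^*C)=\tau\norm{M^*C}^2=\tau\norm{CM}^2\leq 1,
\]
since $C$ is self-adjoint. Applying the first claim yields $A-\tau MM^*\succeq 0$.

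\textbf{Step 3 (part (ii)).} Now $\lambda<1$. Let $\alpha>0$ be the strong monotonicity constant of $A$ on $\Omega_0^\perp$. For $x\in\Omega_0^\perp$ we have $\norm{A^{1/2}x}^2=\inner{Ax}{x}\ge\alpha\norm{x}^2$, so Step 1 gives
\[
\inner{(A-B)x}{x}\geq(1-\lambda)\alpha\norm{x}^2 \qquad \forall x\in\Omega_0^\perp .
\]
Next I would identify the kernel. Clearly $\Omega_0\subseteq\ker(A-B)$. Both $A$ and $B$ vanish on $\Omega_0$ and map into $\Omega_0^\perp$. So if $(A-B)x=0$, write $x=x_0+x_1$ with $x_0\in\Omega_0$, $x_1\in\Omega_0^\perp$. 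Then $(A-B)x_1=0$, and the coercivity estimate forces $x_1=0$. Hence $\ker(A-B)=\Omega_0$, and $A-B$ is strongly monotone on $\Omega_0^\perp=\ker(A-B)^\perp$.

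The only delicate point is the functional-calculus fact in the opening paragraph: that $A^{1/2}$ maps $\Omega_0^\perp$ onto itself, so that $C$ is bounded and $A^{1/2}C=\Pi_{\Omega_0^\perp}$. I would justify this by restricting $A$ to the invariant subspace $\Omega_0^\perp$. There $A$ is bounded below by $\alpha$, so its square root is bounded below by $\sqrt\alpha$, and $\ran(A^{1/2})=\Omega_0^\perp$ is closed.
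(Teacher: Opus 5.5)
Your proposal is correct and follows essentially the paper's argument. Both use the identity $B = A^{1/2}\bigl((A^{1/2})^\dagger B (A^{1/2})^\dagger\bigr)A^{1/2}$, which comes from $\Pi_{\ker(A)^\perp} = A^{1/2}(A^{1/2})^\dagger = (A^{1/2})^\dagger A^{1/2}$ and $\ker(A)\subseteq\ker(B)$, and then the decomposition $x = x_0 + x_1$ over $\ker(A)\oplus\ker(A)^\perp$ for part (ii).

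The differences are minor. You prove closedness of $\ran(A^{1/2})$ directly from $\norm{A^{1/2}x}^2=\inner{Ax}{x}\geq\alpha\norm{x}^2$ on $\ker(A)^\perp$, where the paper cites Conway and Tarcsay. You also explicitly check the $\tau MM^*$ special case via $\lambda_{\max}\bigl((A^{1/2})^\dagger \tau MM^*(A^{1/2})^\dagger\bigr)=\tau\norm{(A^{1/2})^\dagger M}^2$, which the paper leaves implicit.
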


	\begin{proof}
	Since $A$ is strongly monotone on $\ker(A)^\perp$, it has closed range by \cite[Chapter XI, Proposition 6.1(a)]{Conway} so that $\ran(A) = \ker(A)^\perp$ by \eqref{eq:kernelrange_relationship}. 
	By the closedness of $\ran(A)$, it also follows from  \cite[Theorem 2.1]{Tarcsay2011} that $\ran(A) = \ran (A^{1/2})$. Consequently, $\Pi_{\ker(A)^\perp} = \Pi_{\ran (A^{1/2})} = A^{1/2}(A^{1/2})^\dagger = (A^{1/2})^\dagger A^{1/2}$. Thus, since $\ker(A) \subseteq \ker(B)$, we have $B = \Pi_{\ker(A)^\perp} B \Pi_{\ker(A)^\perp} = A^{1/2}(A^{1/2})^\dagger B (A^{1/2})^\dagger A^{1/2}$. In turn,
	\begin{equation}
		A-B = A^{1/2} ( \Id - (A^{1/2})^\dagger B (A^{1/2})^\dagger) A^{1/2}. 
		\label{eq:A-B_decomposed}
	\end{equation}
	Thus, $ \Id - (A^{1/2})^\dagger B (A^{1/2})^\dagger\succeq 0$ implies that $A-B\succeq 0$. This proves part (i). To prove part (ii), let $x\in \H$ and write $x=x_1+x_2$ with $x_1\in \ker(A)$ and $x_2\in \ker(A)^\perp$. Since $\ker(A)\subseteq \ker(B)$,  
	\begin{align*}
		\inner{x}{(A-B)x} & = \inner{x_2}{(A-B)x_2} \\
		& =\inner{A^{1/2}x_2}{( \Id - (A^{1/2})^\dagger B (A^{1/2})^\dagger) A^{1/2}x_2} \\
		& \geq (1- \lambda_{\max}( (A^{1/2})^\dagger B (A^{1/2})^\dagger))\norm{A^{1/2}x_2}^2 \\
		& \geq (1- \lambda_{\max}( (A^{1/2})^\dagger B (A^{1/2})^\dagger))\alpha \norm{x_2}^2
	\end{align*}
	where the second equality holds by \eqref{eq:A-B_decomposed}, while $\alpha$ is the modulus of strong monotonicity of $A$ on $\ker(A)^\perp$. From this, we get that $A-B$ is strongly monotone on $\ker (A)^\perp$ and $\ker(A-B) \subseteq \ker(A)$. Finally, since $\ker(A)\subseteq \ker(B)$, we have that $\ker(A) \subseteq \ker(A-B)$. 
	\smartqedmark \end{proof}

\begin{lemma}
	\label{lemma:Pi_omegaperp_xk}
	Let $M$ be a bounded linear operator such that $\ran(M)$ is closed. If $\{x^k\}$ is a sequence such that $M^*x^k - b \to 0$, then $\Pi_{\ran (M)}(x^k-\bar{x}) \to 0$ for any $\bar{x}$ such that $M^*\bar{x}=b$.
\end{lemma}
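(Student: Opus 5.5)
The plan is to reduce the claim to the bounded invertibility of $M^*$ restricted to $\ran(M)$, which is where the closed-range hypothesis enters. Fix $\bar{x}$ with $M^*\bar{x}=b$ and set $d^k\coloneqq x^k-\bar{x}$. Then $M^*d^k = M^*x^k - b \to 0$, so it suffices to show that $\norm{M^*d^k}\to 0$ forces $\norm{\Pi_{\ran(M)}d^k}\to 0$.

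First, decompose $d^k = \Pi_{\ran(M)}d^k + \Pi_{\ran(M)^\perp}d^k$. By \eqref{eq:kernelrange_relationship}, $\ran(M)^\perp=\ker(M^*)$, so $M^*\Pi_{\ran(M)^\perp}d^k=0$ and hence
\[
M^*d^k = M^*\Pi_{\ran(M)}d^k .
\]
Second, since $\ran(M)$ is closed, the closed range theorem gives that $\ran(M^*)$ is closed. Moreover, $M^*$ restricted to $\ker(M^*)^\perp=\ran(M)$ is injective, and it maps onto $\ran(M^*)$. By the bounded inverse theorem, there is therefore $c>0$ with
\[
\norm{M^*y}\ge c\norm{y}\quad\text{for all } y\in\ran(M).
\]
Equivalently, one can write $\Pi_{\ran(M)} = (M^*)^\dagger M^*$, using the Moore--Penrose inverse recalled in the preliminaries; this identity holds because $\ran(M^*)$ is closed, and $(M^*)^\dagger$ is bounded.

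Applying the estimate with $y=\Pi_{\ran(M)}d^k$ yields
\[
\norm{\Pi_{\ran(M)}(x^k-\bar{x})}\le c^{-1}\norm{M^*x^k-b}\to 0,
\]
which is the claim.

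The only nontrivial point is the lower bound, that is, justifying that the closedness of $\ran(M)$ transfers to the closedness of $\ran(M^*)$, or to the boundedness of $(M^*)^\dagger$. I would cite the closed range theorem directly (e.g., from \cite{Conway}) rather than reprove it.
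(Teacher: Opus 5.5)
Your proof is correct and is essentially the paper's argument. The paper writes $\Pi_{\ran(M)} = MM^\dagger = (M^\dagger)^*M^*$, which is your alternative identity since $(M^*)^\dagger = (M^\dagger)^*$, and then uses the boundedness of $M^\dagger$; your bounded-below estimate $\norm{M^*y}\ge c\norm{y}$ on $\ran(M)$ carries the same content. The paper's version simply works with $M^\dagger$ directly, so it never needs the closed range theorem to pass to $\ran(M^*)$.
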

\begin{proof}
	We have $\Pi_{\ran (M)}(x^k-\bar{x}) = MM^{\dagger} (x^k-\bar{x}) = (M^{\dagger})^* M^* (x^k-\bar{x}) = (M^{\dagger})^* (M^* x^k-b)\to 0$, where the first equality follows by noting the closedness of $\ran(M)$ and using \cite[Proposition 3.30(ii)]{BC17}, 
	and the second equality holds since projections are self-adjoint.
\smartqedmark \end{proof}

\subsection{Additional results}

The following proposition provides a characterization of strong monotonicity of $\bU_{11}$ and the conditions $\bV_{22}\succeq 0$ and $\ran(\bV_{21})\subseteq \ran(\bV_{22})$, under the simplified setting that the $U_i$'s and $V_i$'s are scalar operators. 
\begin{proposition}\label{lemma:UV_specialcase}
	Let $\bU=U\otimes \Id$ and $\bV=V\otimes \Id$, where $
	U=\diag(\mu_1,\dots,\mu_n)$ and $V=\diag(\nu_1,\dots,\nu_n).$
	Then the following hold:
	\begin{enumerate}
		\item $\bU_{11}$ is strongly monotone if and only if $
		\sum_{i=1}^n \mu_i>0.$
		
		\item $\ran (\bV_{22})$ is closed. Moreover, the conditions $
		\bV_{22}\succeq 0$ and
		$\ran(\bV_{21})\subseteq \ran(\bV_{22})$
		hold if and only if either
		\begin{enumerate}
			\item $\nu_i\geq 0$ for all $i$, or
			\item there exists a unique index $j\in\{1,\dots,n\}$ such that $\nu_j<0$, $\nu_i>0 $ for all $i\neq j$, and $
			\sum_{i=1}^n \nu_i^{-1}<0.$
		\end{enumerate}
	\end{enumerate}
\end{proposition}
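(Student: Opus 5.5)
Both parts reduce to computing the blocks of $\bU$ and $\bV$ with respect to $\bcalH=\bOmega\oplus\bOmega^\perp$, where $\bOmega=\{1_n\otimes x\}$ is the diagonal subspace. Since $\bU=U\otimes\Id$ and $\bV=V\otimes\Id$, everything factors through the real $n\times n$ matrices $U$, $V$ and the decomposition $\Re^n=\Span\{1_n\}\oplus 1_n^\perp$. I will therefore prove each statement for the matrices and lift it by tensoring with $\Id$. The lift is justified because $\Pi_{\bOmega}=\frac1n 1_n1_n^\top\otimes\Id$ and $\Pi_{\bOmega^\perp}=(I-\frac1n 1_n1_n^\top)\otimes\Id$. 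Consequently every block has the form $(\text{matrix})\otimes\Id$, and positivity, range inclusion and closedness of range all transfer from the matrix to its tensor with $\Id$.

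For (i), compute $\inner{\bU(1_n\otimes x)}{1_n\otimes x}=(\sum_i\mu_i)\norm{x}^2$. Also $\norm{1_n\otimes x}^2=n\norm{x}^2$. Hence $\bU_{11}$ is strongly monotone on $\bOmega$ if and only if $\sum_i\mu_i>0$, which is immediate.

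For (ii), closedness of $\ran(\bV_{22})$ is automatic. Indeed, $\bV_{22}=(\Pi V\Pi)\otimes\Id$ with $\Pi=I-\frac1n 1_n1_n^\top$, a finite matrix tensored with $\Id$, so its range is $\ran(\Pi V\Pi)\otimes\H$, a finite sum of closed subspaces arranged orthogonally, hence closed. Next, by \cref{lemma:schurcomplementlemma}(ii) applied on the finite-dimensional matrix level, the condition ``$V_{22}\succeq0$ and $\ran(V_{21})\subseteq\ran(V_{22})$'' is equivalent to $V+t\,\frac1n1_n1_n^\top\succeq0$ for some (equivalently, all sufficiently large) $t>0$. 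This is the standard fact that a symmetric block matrix can be made positive semidefinite by enlarging its $(1,1)$ block iff the $(2,2)$ block is PSD and the range condition holds. So the task becomes: characterize when $V=\diag(\nu_i)$ is positive semidefinite on $1_n^\perp$, i.e. when $\sum_i\nu_iy_i^2\ge0$ whenever $\sum_iy_i=0$, together with the range condition.

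For the case analysis: if all $\nu_i\ge0$, everything holds, with the range condition following from the perturbation characterization. If two indices have $\nu_i<0$, or one has $\nu_j<0$ and another $\nu_i=0$, a test vector $y=e_i-e_j$ yields a violation or a failing range condition; handling this zero case is the delicate part. In the main case, a unique $\nu_j<0$ and all other $\nu_i>0$, I minimize $\sum_{i\ne j}\nu_iy_i^2$ subject to $\sum_{i\ne j}y_i=-y_j$ by Cauchy--Schwarz. The minimum is $y_j^2/\sum_{i\ne j}\nu_i^{-1}$, so nonnegativity becomes $\nu_j+(\sum_{i\neq j}\nu_i^{-1})^{-1}\ge0$, which is equivalent to $\sum_i\nu_i^{-1}\le0$.

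The main obstacle is the boundary case $\sum_i\nu_i^{-1}=0$. Here $V_{22}\succeq0$ but is singular on $1_n^\perp$, with kernel direction $y=(\nu_i^{-1})_i$. I will check the range condition directly: $\ran(V_{21})$ is spanned by $\Pi V1_n=\Pi\nu$, and I will test whether this vector is orthogonal to $\ker V_{22}$. Its inner product with the kernel vector is $\sum_i\nu_i\cdot\nu_i^{-1}-(\text{mean term})$, i.e. $n-\frac1n(\sum_i\nu_i)(\sum_i\nu_i^{-1})=n\neq0$. So the range condition fails, which yields the strict inequality $\sum_i\nu_i^{-1}<0$ in (b). Similarly, zeros coexisting with a negative modulus are excluded by the same kernel-versus-range test, and this is the step I expect to require the most care.
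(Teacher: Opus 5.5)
Your proposal is correct and follows essentially the paper's route. You reduce to the $n\times n$ matrices, eliminate two negative moduli (or a negative-plus-zero pair) with $e_i-e_j$, use Cauchy--Schwarz in the one-negative case, and exclude $\sum_i\nu_i^{-1}=0$ by pairing the kernel vector $(\nu_i^{-1})_i$ with $\Pi_{\Omega^\perp}v$ to get $n\neq0$. The one deviation is case (a), where you use the perturbation form $V+t\Pi_\Omega\succeq0$ of \cref{lemma:schurcomplementlemma}(ii); this is a neat substitute for the paper's check that $Vz=0$ on $\ker V_{22}$.

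Two small corrections. First, the negative-plus-zero case is not delicate: $\inner{V(e_i-e_j)}{e_i-e_j}=\nu_j<0$ already violates $V_{22}\succeq0$, and the kernel-versus-range test does not even apply there, since $(\nu_k^{-1})_k$ is undefined. Second, for the ``if'' direction of (b) you should state explicitly that, when $\sum_i\nu_i^{-1}<0$, your Cauchy--Schwarz bound makes the form positive definite on $1_n^\perp$. Hence $\ran(V_{22})=1_n^\perp\supseteq\ran(V_{21})$, exactly as in the paper's Case~2.
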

\begin{proof}
	Denote $\Omega=\Span\{1_n\}\subseteq \Re^n$, so that $\Omega^\perp=\{x\in\Re^n:1_n^\top x=0\}
	$. Since $
	\bU=U\otimes \Id$ and $\bV=V\otimes \Id,$
	then
	$
	\bU_{11}=U_{11}\otimes \Id,
	$
	$\bV_{21}=V_{21}\otimes \Id,$
	and 
	$\bV_{22}=V_{22}\otimes \Id.
	$
	For (i), since $U=\diag(\mu_1,\dots,\mu_n)$, we have $
	U_{11}=\frac1n\left(\sum_{i=1}^n \mu_i\right)\Id.$
	Hence, $\bU_{11}$ is strongly monotone if and only if $U_{11}$ is strongly monotone, which is equivalent to $\sum_{i=1}^n \mu_i>0$.
	
	For (ii), $\bV_{22}=V_{22}\otimes \Id$ implies that $\ran(\bV_{22})$ is closed. Further, the identities above give
	\[
	\bV_{22}\succeq 0 \iff V_{22}\succeq 0,
	\qquad
	\ran(\bV_{21})\subseteq \ran(\bV_{22})
	\iff
	\ran(V_{21})\subseteq \ran(V_{22}).
	\]
	Thus, to prove the equivalence claim in (ii), it suffices to show that $
	V_{22}\succeq 0$ and  $\ran(V_{21})\subseteq \ran(V_{22})$
	if and only if either (a) or (b) holds. 
	We divide the proof into three parts:
	
	\medskip 
	\noindent \textbf{Part 1} (Characterization of $V_{22}\succeq 0$). 
	Note that $V_{22}\succeq 0$ if and only if $x^\top V_{22}x\geq 0$ for all $x\in \Omega^\perp$. Meanwhile, for any $x\in \Omega^\perp$, we have 
	\[ \inner{V_{22}x}{x} =
	\inner{\Pi_{\Omega^\perp}Vx}{x}
	=
	\inner{ Vx}{x}
	=
	\sum_{i=1}^n \nu_i x_i^2.
	\]
	Hence, $V_{22}\succeq 0$ if and only if $
	\sum_{i=1}^n \nu_i x_i^2\geq 0$ for all $x\in \Omega^\perp.$ We claim that $
	V_{22}\succeq 0$
	if and only if either (i) all $\nu_i\geq 0$, or (ii) exactly one $\nu_i$ is negative, all the others are positive, and
	$
	\sum_{i=1}^n\nu_i^{-1}\leq 0.$

	\noindent ($\Longrightarrow$) If (i) holds, we are done. Suppose otherwise. We first show that at most one $\nu_i$ can be negative. Indeed, if $\nu_i<0$ and $\nu_j<0$ for some $i\neq j$, choose $
	x= e_i-e_j\in \Omega^\perp$, where $e_i$ denotes the $i$th canonical basis vector of $\Re^n$.
	Then
	$
	\inner{V_{22}x}{x}=\inner{Vx}{x}=\nu_i+\nu_j<0,
	$
	contrary to $V_{22}\succeq 0$. Hence, there is at most one negative $\nu_j$.
	
	Assume now that $\nu_j<0$ for some $j$. Then all $\nu_i\geq 0$ for $i\neq j$. Observe that if $\nu_i=0$ for some $i\neq j$, then with $x= e_j-e_i\in \Omega^\perp$ we get
	$
	\inner{V_{22}x}{x}=\nu_j<0,
	$
	again a contradiction. Thus, (ii) holds.
	
	\noindent ($\Longleftarrow$) If (i) holds, then it is clear that $V_{22}\succeq 0$. Suppose that (ii) holds, that is, $\nu_j<0$ and $\nu_i>0$ for all $i\neq j$. For $x\in\Omega^\perp$, the constraint $\sum_{i=1}^n x_i=0$ gives $
	x_j=-\sum_{i\neq j}x_i.
	$
	Hence
	\begin{equation}\label{eq:xVx_substitute}
		\inner{Vx}{x}
		=\nu_j\Bigl(\sum_{i\neq j}x_i\Bigr)^2+\sum_{i\neq j}\nu_i x_i^2.
	\end{equation}
	By the Cauchy--Schwarz inequality,
	\[
	\Bigl(\sum_{i\neq j}x_i\Bigr)^2 = \Bigl(\sum_{i\neq j}\sqrt{\nu_i}x_i \cdot \frac{1}{\sqrt{\nu_i}}\Bigr)^2
	\leq
	\Bigl(\sum_{i\neq j}\nu_i x_i^2\Bigr)\Bigl(\sum_{i\neq j}\frac1{\nu_i}\Bigr).
	\]
	Since $\nu_j<0$, we obtain 
	$
	\nu_j\Bigl(\sum_{i\neq j}x_i\Bigr)^2
	\ge
	\nu_j\Bigl(\sum_{i\neq j}\frac1{\nu_i}\Bigr)\Bigl(\sum_{i\neq j}\nu_i x_i^2\Bigr).
	$
	Therefore, by \eqref{eq:xVx_substitute}, 
	\begin{equation}\label{eq:xVx_lowerbound}
		\inner{Vx}{x}
		\geq
		\Bigl(1+\nu_j\sum_{i\neq j}\frac1{\nu_i}\Bigr)\sum_{i\neq j}\nu_i x_i^2
		=
		\nu_j\Bigl(\sum_{i=1}^n\frac1{\nu_i}\Bigr)\sum_{i\neq j}\nu_i x_i^2.
	\end{equation}
	Since $\nu_j<0$ and $\sum_{i\neq j}\nu_i x_i^2\geq 0$, we conclude that $\inner{Vx}{x} \geq 0$ for all $x\in \Omega^\perp$ and therefore $V_{22}\succeq 0$.

	\medskip
	\noindent \textbf{Part 2} (Characterization of the range condition). 
	Note that since $V_{22}$ is symmetric and positive semidefinite,
	\begin{equation}
		\label{eq:rangecondition}
		\ran(V_{21})\subseteq \ran(V_{22}) = \ker(V_{22})^\perp 
		\quad\Longleftrightarrow\quad
		\ran(V_{21})\perp \ker(V_{22}).
	\end{equation}
	Meanwhile, $
	\Pi_\Omega=\frac1n1_n 1_n^\top ,$ and $\Pi_{\Omega^\perp}=\Id -\Pi_\Omega.
	$
	Since $
	\Pi_\Omega x=\frac{1_n^\top x}{n}1_n,$
	we obtain
	\[
	V_{21}x
	=\Pi_{\Omega^\perp}V\Pi_\Omega x
	=\frac{1_n^\top x}{n}\,\Pi_{\Omega^\perp}(V1_n).
	\]
	If we set $v\coloneqq (\nu_1,\dots,\nu_n)^\top$, then $V1_n=v$, so $
	\ran(V_{21})=\Span\{\Pi_{\Omega^\perp}v\}.$
	By \eqref{eq:rangecondition}, we have 	
	\begin{equation}
		\label{eq:rangecondition2}
		\ran(V_{21})\subseteq \ran(V_{22})
		\quad\Longleftrightarrow\quad
		\Pi_{\Omega^\perp}v\perp \ker(V_{22}), \quad\text{where } v\coloneqq (\nu_1,\dots,\nu_n)^\top.
	\end{equation}

	\medskip 
	\noindent \textbf{Part 3} (Proof of the main claim).
	
	\noindent ($\Longrightarrow$) Suppose that $V_{22}\succeq 0$ and $\ran(V_{21})\subseteq \ran(V_{22})$. By Part 1, we conclude that either (a)  holds, or exactly one $\nu_i$ is negative, all the others are positive and $\sum_{i=1}^n \nu_i^{-1} \leq 0$. Suppose that  $\sum_{i=1}^n \nu_i^{-1} = 0$, so that
	$
	w:=\bigl(\nu_1^{-1},\dots,\nu_n^{-1}\bigr)^\top \in \Omega^\perp.
	$
	Then $Vw=1_n,$ and therefore
	\[
	V_{22}w=\Pi_{\Omega^\perp}Vw=\Pi_{\Omega^\perp}1_n=0.
	\]
	Thus, $w\in \ker(V_{22})$.
	
	On the other hand,
	\[
	\langle \Pi_{\Omega^\perp}v,w\rangle
	=
	\langle v,w\rangle
	=
	\sum_{i=1}^n \nu_i\frac1{\nu_i}
	=
	n\neq 0,
	\]
	where the first equality holds since $w\in\Omega^\perp$. Hence, $\Pi_{\Omega^\perp}v\not\perp \ker(V_{22})$, and therefore by \eqref{eq:rangecondition2},  $
	\ran(V_{21})\not\subseteq \ran(V_{22})$, which is a contradiction. Thus, $\sum_{i=1}^n\nu_i^{-1}<0$. In other words, (b) holds.

	\noindent ($\Longleftarrow$) We consider each case separately.
	
	\noindent \emph{Case 1:} Suppose that $\nu_i\geq 0$ for all $i$. By Part 1, we have $V_{22}\succeq 0$. Now let $z\in \Omega^\perp$ such that $z\in \ker(V_{22})$.  Then
	$
	0=\inner{V_{22}z}{z}=\inner{Vz}{z}=\sum_{i=1}^n \nu_i z_i^2.
	$
	As each $\nu_i\geq 0$, it follows that $\nu_i z_i=0$ for every $i$, hence $Vz=0$. Therefore
	\[
	\inner{ \Pi_{\Omega^\perp }v}{z}
	= \inner{  v}{\Pi_{\Omega^\perp} z} = 
	\inner{V1_n}{z} = \inner{1_n}{Vz}= 0,
	\]
	because $z\in \Omega^\perp$. Thus, $\Pi_{\Omega^\perp}v\perp \ker(V_{22})$. By \eqref{eq:rangecondition2} in Part 2, we conclude that $\ran (V_{21}) \subseteq \ran (V_{22})$. 
	
	\smallskip
	\noindent
	\emph{Case 2:} Suppose that exactly one $\nu_j$ is negative, all others are positive, and $\sum_i \nu_i^{-1}<0$. Similarly, we have from Part 1 that $V_{22}\succeq 0$.  On the other hand, by \eqref{eq:xVx_lowerbound}, $\inner{Vx}{x}>0$ for all $x\in \Omega^\perp\setminus \{ 0\}$.
	Hence, $V_{22}$ is positive definite on $\Omega^\perp$, and therefore $
	\ran(V_{22})=\Omega^\perp.$
	Since $\ran(V_{21})\subseteq \Omega^\perp$, the range inclusion holds. 
	
	This completes the proof.
\smartqedmark \end{proof}

The following result is the key tool in proving that the admissible range of the stepsize $\gamma$ and the relaxation parameters $\lambda_k$ obtained in the present work are strictly larger than the ones derived in \cite{DTT26}. 
\begin{proposition}\label{prop:compare_gamma_upperbound}
	Let $L\in \mathbb{R}^{n\times n}$,
	$Q\in \mathbb{R}^{n\times p}$ and $M\in \mathbb{R}^{n\times m}$ be matrices, and let $\omega>0$.
	Define
	$
	\rho\coloneqq \omega\,\lambda_{\max}\!\left((L_s^{1/2})^\dagger QQ^\top (L_s^{1/2})^\dagger\right),$, 
	$\tau\coloneqq \norm{Q^\top (M^\top )^\dagger}^2,
	$
	and let $\Omega\coloneqq \ker(M^\top)$. Suppose that $L_s\succeq 0$, $\ker(L_s)=\Omega$, $
	\Omega\subseteq \ker(Q^\top)$, and 
	\begin{equation}
		\label{eq:DTT_assumption_app}
		L_s-\frac12 MM^\top \succeq 0.
	\end{equation}
	Then the following hold:
	\begin{enumerate}
		\item If $\tau,\rho>0$, then $\frac{2}{\omega\tau} \leq \frac{4}{\rho}$, that is, 	$\left(0,\frac{2}{\omega\tau}\right)\subseteq \left(0,\frac{4}{\rho}\right).$ 
		\item  For any $\gamma\in \left(0,\frac{2}{\omega\tau}\right)$, define
		\[
		\Xi_{\gamma}\coloneqq L_s-\frac{\gamma\omega}{4}QQ^\top,
		\qquad
		\beta_{\gamma}\coloneqq \norm{(\Xi_{\gamma}^{1/2})^\dagger M}^{-2},
		\qquad
		\beta_{\gamma}^{\rm DTT}\coloneqq \frac{2-\gamma\omega\tau}{4}.
		\]
		Then
		$
		\beta_{\gamma}\geq \beta_{\gamma}^{\rm DTT}.
		$
	\end{enumerate}
\end{proposition}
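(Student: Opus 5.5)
The plan rests on one operator inequality that controls $QQ^\top$ by $MM^\top$. Combined with \eqref{eq:DTT_assumption_app}, this gives both parts. Throughout, $\Omega=\ker(M^\top)=\ker(L_s)$, and $\ran(M)=\Omega^\perp$ because we are in finite dimensions.

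\emph{Step 1 (key comparison).} Since $\Omega\subseteq\ker(Q^\top)$, we have $Q^\top=Q^\top\Pi_{\Omega^\perp}$. In addition, $\Pi_{\Omega^\perp}=\Pi_{\ran(M)}=MM^\dagger=(M^\top)^\dagger M^\top$. Hence $Q^\top x=Q^\top (M^\top)^\dagger M^\top x$ for every $x$, and therefore
\[
x^\top QQ^\top x=\norm{Q^\top (M^\top)^\dagger M^\top x}^2\le \tau\norm{M^\top x}^2 .
\]
This says $QQ^\top\preceq \tau MM^\top$. Together with \eqref{eq:DTT_assumption_app} we obtain
\[
QQ^\top\preceq \tau MM^\top\preceq 2\tau L_s .
\]

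\emph{Step 2 (part (i)).} Conjugate the inequality from Step 1 by $(L_s^{1/2})^\dagger$. Since $L_s^{1/2}$ and $L_s$ share the same range $\Omega^\perp$, we have $(L_s^{1/2})^\dagger L_s (L_s^{1/2})^\dagger=\Pi_{\Omega^\perp}$. This gives
\[
(L_s^{1/2})^\dagger QQ^\top (L_s^{1/2})^\dagger\preceq 2\tau\,\Pi_{\Omega^\perp}.
\]
Taking $\lambda_{\max}$ yields $\rho\le 2\omega\tau$, which is exactly $\frac{2}{\omega\tau}\le\frac4\rho$.

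\emph{Step 3 (part (ii)).} Fix $\gamma\in(0,\frac{2}{\omega\tau})$. By part (i), $\gamma\rho/4<1$. We apply \cref{lemma:beta_existence}(ii) with $A=L_s$ and $B=\frac{\gamma\omega}{4}QQ^\top$; its hypothesis $\ker(A)\subseteq\ker(B)$ holds because $\Omega\subseteq\ker(Q^\top)$. The lemma gives $\ker(\Xi_\gamma)=\Omega$, and moreover $\Xi_\gamma$ is positive definite on $\Omega^\perp$. Next, using $QQ^\top\preceq\tau MM^\top$ and then \eqref{eq:DTT_assumption_app}, we get
\[
\Xi_\gamma\succeq L_s-\frac{\gamma\omega\tau}{4}MM^\top\succeq\Bigl(\frac12-\frac{\gamma\omega\tau}{4}\Bigr)MM^\top=\beta_\gamma^{\rm DTT}MM^\top .
\]
Conjugating this by $(\Xi_\gamma^{1/2})^\dagger$ and using $(\Xi_\gamma^{1/2})^\dagger\Xi_\gamma(\Xi_\gamma^{1/2})^\dagger=\Pi_{\Omega^\perp}\preceq\Id$ gives
\[
\beta_\gamma^{\rm DTT}\,\bigl((\Xi_\gamma^{1/2})^\dagger M\bigr)\bigl((\Xi_\gamma^{1/2})^\dagger M\bigr)^\top\preceq \Id .
\]
Therefore $\norm{(\Xi_\gamma^{1/2})^\dagger M}^2\le 1/\beta_\gamma^{\rm DTT}$, that is, $\beta_\gamma\ge\beta_\gamma^{\rm DTT}$. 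Note that $\beta_\gamma^{\rm DTT}>0$ because $\gamma<\frac{2}{\omega\tau}$.

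\emph{Expected obstacle.} The only delicate point is Step 1: rewriting $Q^\top$ through $(M^\top)^\dagger M^\top$. This needs both the kernel inclusion $\Omega\subseteq\ker(Q^\top)$ and the projector identity for the Moore--Penrose inverse. Once that step is in place, everything else is monotone conjugation of operator inequalities together with the kernel bookkeeping provided by \cref{lemma:beta_existence}.
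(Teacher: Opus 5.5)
Your proof is correct. It uses the same two key inequalities as the paper: $QQ^\top\preceq\tau MM^\top$ (obtained from $Q^\top=Q^\top(M^\top)^\dagger M^\top$) and $\Xi_\gamma\succeq\beta_\gamma^{\rm DTT}MM^\top$. It differs in how these are turned into spectral bounds.

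The paper proves (i) without $QQ^\top\preceq\tau MM^\top$. It rewrites $\rho=\omega\lambda_{\max}(Q^\top L_s^\dagger Q)$ and $\tau=\lambda_{\max}(Q^\top(MM^\top)^\dagger Q)$. It then argues that $L_s$ and $MM^\top$ are bijective on $\Omega^\perp$, and inverts $L_s\succeq\frac12MM^\top$ there to get $L_s^\dagger\preceq2(MM^\top)^\dagger$ on $\Omega^\perp$. In (ii) it likewise inverts $\Xi_\gamma\succeq\beta_\gamma^{\rm DTT}MM^\top$ on $\Omega^\perp$, and uses that $M^\top(MM^\top)^\dagger M$ is a projector.

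You instead let the chain $QQ^\top\preceq\tau MM^\top\preceq2\tau L_s$ drive both parts. You pass to eigenvalue and norm bounds by congruence with $(L_s^{1/2})^\dagger$ and $(\Xi_\gamma^{1/2})^\dagger$, which preserves the Loewner order. This avoids both the order reversal under inversion and the bijectivity bookkeeping on $\Omega^\perp$. Your Step 2 also gives $\rho\le2\omega\tau$ without assuming $\tau,\rho>0$. That inequality, not (i) as literally stated, is what Step 3 needs when $\tau=0$, so cite it rather than ``part (i)''.

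Your appeal to \cref{lemma:beta_existence}(ii) in Step 3 is harmless but unnecessary. The bound $\Xi_\gamma\succeq\beta_\gamma^{\rm DTT}MM^\top\succeq0$ already makes $\Xi_\gamma^{1/2}$ well defined. Moreover, $(\Xi_\gamma^{1/2})^\dagger\Xi_\gamma(\Xi_\gamma^{1/2})^\dagger$ is the orthogonal projector onto $\ran(\Xi_\gamma)$, hence $\preceq\Id$, whatever that range is.

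What the paper's route buys is mainly interpretive. It presents (i) as a direct comparison of $Q^\top L_s^\dagger Q$ with $Q^\top(MM^\top)^\dagger Q$, that is, as the inverted form of the assumption $L_s-\frac12MM^\top\succeq0$.
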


\begin{proof}
	We first rewrite $\rho$ as
	\begin{align}
		\rho
		&= \omega\,\lambda_{\max}\!\left((L_s^{1/2})^\dagger QQ^\top(L_s^{1/2})^\dagger\right) \notag\\
		&= \omega\,\lambda_{\max}\!\left(\bigl((L_s^{1/2})^\dagger Q\bigr)\bigl((L_s^{1/2})^\dagger Q\bigr)^\top\right) \notag\\
		&= \omega\,\lambda_{\max}\!\left(\bigl((L_s^{1/2})^\dagger Q\bigr)^\top \bigl((L_s^{1/2})^\dagger Q\bigr)\right) \notag\\
		&= \omega\,\lambda_{\max}(Q^\top L_s^\dagger Q), \label{eq:rho_equivalent}
	\end{align}
	where we used $(L_s^{1/2})^\dagger (L_s^{1/2})^\dagger = L_s^\dagger$. Next, we rewrite $\tau$ as
	\begin{align}
		\tau
		&= \lambda_{\max}\!\left( Q^\top (M^\top )^\dagger \bigl(Q^\top (M^\top )^\dagger\bigr)^\top \right) \notag\\
		&= \lambda_{\max}\!\left( Q^\top (M^\top )^\dagger M^\dagger Q \right) \notag\\
		&= \lambda_{\max}\!\bigl(Q^\top(MM^\top)^\dagger Q\bigr). \label{eq:tau_equivalent}
	\end{align}
	
	Since $\Omega=\ker(M^\top)$, we have $\ran(MM^\top)=\ran(M)=\Omega^\perp$. Hence,$MM^\top|_{\Omega^\perp}$ is an operator on $\Omega^\perp$.
	If $s\in \Omega^\perp\setminus\{0\}$, then $s\notin \ker(M^\top)=\Omega$, and hence $
	\inner{MM^\top s}{s}=\norm{M^\top s}^2>0.$
	Thus, $MM^\top|_{\Omega^\perp}$ is positive definite, and therefore bijective on $\Omega^\perp$. On the other hand, by \eqref{eq:DTT_assumption_app}, $L_s|_{\Omega^\perp}$ is also positive definite. Since $\ker(L_s)=\Omega$ (i.e., $\ran(L_s)=\Omega^\perp)$, $L_s$ is bijective on $\Omega^\perp$.
	
	Hence,
	$
	L_s|_{\Omega^\perp} \succeq \frac12 (MM^\top)|_{\Omega^\perp},
	$
	and consequently
	$
	(L_s|_{\Omega^\perp})^{-1}\preceq 2\bigl((MM^\top)|_{\Omega^\perp}\bigr)^{-1}.
	$
	Therefore, on $\Omega^\perp$,
	$
	L_s^\dagger|_{\Omega^\perp} \preceq 2(MM^\top)^\dagger|_{\Omega^\perp}.
	$
	Equivalently,
	\begin{equation}
		\label{eq:Ldagger_MMtop}
		\inner{L_s^\dagger s}{s}\leq 2\,\inner{(MM^\top)^\dagger s}{s}
		\qquad \forall s\in \Omega^\perp.
	\end{equation}
	
	Now let $x\in \mathbb{R}^p$ with $\norm{x}=1$. Since $\Omega\subseteq \ker(Q^\top)$, we have
	$\ran(Q)\subseteq \Omega^\perp$, and hence $Qx\in \Omega^\perp$. It follows from \eqref{eq:Ldagger_MMtop} that
	\begin{align*}
		\inner{x}{Q^\top L_s^\dagger Qx}
		&= \inner{Qx}{L_s^\dagger Qx}\\
		&\leq2\,\inner{(MM^\top)^\dagger Qx}{Qx}\\
		&= 2\,\inner{x}{Q^\top (MM^\top)^\dagger Qx}\\
		&\leq2\,\lambda_{\max}\!\bigl(Q^\top (MM^\top)^\dagger Q\bigr)\\
		&= 2\tau,
	\end{align*}
	where the last equality follows from \eqref{eq:tau_equivalent}. Taking the supremum over all $x\in\mathbb{R}^p$ with $\norm{x}=1$ and using \eqref{eq:rho_equivalent}, we obtain
	$
	\frac{\rho}{\omega}\leq 2\tau.
	$
	Equivalently,
	$
	\frac{4}{\rho}\geq\frac{2}{\omega\tau},
	$
	which proves the claim of (i).
	
	We now compare $\beta_\gamma$ and $\beta_\gamma^{\rm DTT}$. Since  $\ran(Q)\subseteq \Omega^\perp=\ran(M)$, and therefore
	$Q=\Pi_{\ran (M)} Q = MM^\dagger Q.$
	Hence, for any $y\in \Re^n$
	\begin{align*}
		\inner{y}{QQ^\top y} & = \inner{y}{MM^\dagger Q Q^\top (M^\dagger)^\top M^\top y} \\
		& = \inner{M^\top y}{M^\dagger Q Q^\top (M^\dagger)^\top (M^\top y)} \\ 
		& \leq \norm{Q^\top (M^\dagger)^\top}^2 \inner{M^\top y}{M^\top y} \\
		& = \tau \inner{y}{MM^\top y}.
	\end{align*}
	That is, $QQ^\top \preceq \tau MM^\top$. Consequently,
	\begin{align}
		\Xi_\gamma
		&= L_s-\frac{\gamma\omega}{4}QQ^\top\succeq L_s-\frac{\gamma\omega\tau}{4}MM^\top \succeq \left(\frac12-\frac{\gamma\omega\tau}{4}\right)MM^\top = \beta_\gamma^{\rm DTT}\,MM^\top. \label{eq:Xi_lower_bound_polished}
	\end{align}
	Since $\gamma<\frac{2}{\omega\tau}$, we have $\beta_\gamma^{\rm DTT}>0$. Thus, $\Xi_\gamma|_{\Omega^\perp}$ is positive definite, whose range is contained in $\Omega^\perp$ since $\Omega\subseteq \ker (\Xi_{\gamma})$. Similar to preceding arguments, \eqref{eq:Xi_lower_bound_polished} yields
	$
	\Xi_\gamma^\dagger|_{\Omega^\perp}\preceq \frac{1}{\beta_\gamma^{\rm DTT}}(MM^\top)^\dagger|_{\Omega^\perp}.
	$ Therefore,
	\begin{align*}
		\norm{(\Xi_\gamma^{1/2})^\dagger M}^2
		&= \lambda_{\max}(M^\top \Xi_\gamma^\dagger M) \\
		& = \max_{\norm{y}=1} \! \inner{\Xi_{\gamma}^\dagger M y}{My}, \qquad (\text{where } My \in \Omega^\perp) \\
		& \leq  \max_{\norm{y}=1} \! \frac{1}{\beta_\gamma^{\rm DTT}} \inner{(MM^\top)^\dagger My}{My} \\ 
		& = \frac{1}{\beta_\gamma^{\rm DTT}}\lambda_{\max}\!\bigl(M^\top (MM^\top)^\dagger M\bigr).
	\end{align*}
	Now $M^\top (MM^\top)^\dagger M=M^\dagger M$ is the orthogonal projector onto $\ran(M^\top)$, and therefore its largest eigenvalue is at most $1$. Hence
	$
	\norm{(\Xi_\gamma^{1/2})^\dagger M}^2\leq \frac{1}{\beta_\gamma^{\rm DTT}}.
	$
	Taking reciprocals, we obtain
	$
	\beta_\gamma
	=
	\norm{(\Xi_\gamma^{1/2})^\dagger M}^{-2}
	\geq  \beta_\gamma^{\rm DTT},
	$
	as claimed.
\smartqedmark \end{proof}

\begin{example}\label{ex:strictlylarger_gamma_beta}
	Let $L = \begin{bmatrix}
		1 & 0\\
		-2 & 1
	\end{bmatrix}$ so that 
	$L_s=
	\begin{bmatrix}
		1 & -1\\
		-1 & 1
	\end{bmatrix}$. Further, let $M=
	\begin{bmatrix}
		1\\
		-1
	\end{bmatrix}$, $P=\begin{bmatrix}
	0\\
	1
	\end{bmatrix}$, $R=
	\begin{bmatrix}
	1 & 0
	\end{bmatrix},
	$
and let  $ Q\coloneqq P-R^\top.$ 
	Then
	$
	MM^\top=L_s,
	$
	and hence
	$
	L_s-\frac12 MM^\top=\frac12 L_s\succeq 0.
	$ Moreover,
	$
	\ker(L_s)=
	\ker(M^\top) = \Omega,
	$ where $\Omega \coloneqq \{ (x,x):x\in \Re\}$. 
	Observe that
	$
	Q = \begin{bmatrix}
		-1\\1
	\end{bmatrix}\in\Omega^\perp
	$, and  therefore $\ran(Q)\subseteq \Omega^\perp$.

By direct calculation, $Q^\top (M^\top)^\dagger = -1$ and therefore $\tau=\norm{Q^\top (M^\top)^\dagger}^2=1.$ On the other hand, $L_s^\dagger=\frac14 \begin{bmatrix}
		1 & -1\\
		-1 & 1
	\end{bmatrix}.$
	A direct computation gives
	$
	Q^\top L_s^\dagger Q = 1.
	$
	Hence,
	$
	\lambda_{\max}(Q^\top L_s^\dagger Q)=1,
	$
	and therefore
	$
	\rho=\omega\,\lambda_{\max}(Q^\top L^\dagger Q)=\omega.
	$ Consequently, $\frac{4}{\rho} > \frac{2}{\omega\tau}$.

Now, observe that
$
QQ^\top=
\begin{bmatrix}
	1 & -1\\
	-1 & 1
\end{bmatrix}
=L_s.
$
Thus,
$
\Xi_\gamma
=
L_s-\frac{\gamma\omega}{4}QQ^\top
=
\left(1-\frac{\gamma\omega}{4}\right)L_s.
$
Therefore,
$
(\Xi_\gamma^{1/2})^\dagger
=
\left(1-\frac{\gamma\omega}{4}\right)^{-1/2}(L_s^{1/2})^\dagger.
$
Direct calculations give $
\beta_\gamma
= \norm{(\Xi_\gamma^{1/2})^\dagger M}^{-2} = 1-\frac{\gamma\omega}{4}$. On the other hand, since $\tau=1$, we also have $\beta_\gamma^{\rm DTT} = \frac{2-\gamma\omega}{4}. $
Therefore,
$\beta_\gamma-\beta_\gamma^{\rm DTT} =\left(1-\frac{\gamma\omega}{4}\right)-\frac{2-\gamma\omega}{4}
=\frac12>0.$
\end{example}

\section{Auxiliary results on semimonotone operators}

The following is a generalization of \cite[Corollary 4.10]{ELP25}.

\begin{proposition}
	\label{prop:equivalence_semimonotone}
	Let $U,V\in \mathcal{B}(\H)$ be self-adjoint operators such that $UV=\theta \Id$ with $\theta<1/4$. Let $\varrho = -\frac{1}{2}+\frac{1}{2\sqrt{1-4\theta}}$, and define 
	\begin{equation}
		\label{eq:hatUV}
		\hat{U} \coloneqq \frac{1+2\varrho}{1+\varrho}U \quad \text{and} \quad \hat{V} \coloneqq (1+2\varrho)V.
	\end{equation}
	Then $A$ is (maximally) $(U,V)$-semimonotone if and only if $\hat{A} \coloneqq (A-\hat{U})^{-1} - \hat{V}$ is (maximally) monotone.
\end{proposition}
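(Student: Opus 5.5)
The plan is a direct computation on graphs: I will show that the inequality defining $(U,V)$-semimonotonicity of $A$ and the monotonicity inequality for $\hat A$ differ only by a positive scalar factor. Maximality will then follow because the graph correspondence is a linear bijection of $\H\times\H$.

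First I set up the graph correspondence. By definition of inverse and subtraction, $(u,v)\in\gra\hat A$ if and only if $u=y-\hat U x$ and $v=x-\hat V(y-\hat U x)$ for some $(x,y)\in\gra A$. Define
\[
\mathcal{T}(x,y)\coloneqq\bigl(y-\hat Ux,\;x-\hat V(y-\hat Ux)\bigr).
\]
This is a bounded linear map on $\H\times\H$ with an explicit inverse: from $(u,v)$ recover $x=v+\hat Vu$ and then $y=u+\hat Ux$. Moreover $\gra\hat A=\mathcal T(\gra A)$, and the same formula defines $\hat B$ from any operator $B\colon\H\toset\H$.

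The key step is an algebraic identity. From $UV=\theta\Id$ and self-adjointness I get $VU=(UV)^*=\theta\Id$, and hence $\hat V\hat U=c\Id$ with
\[
c\coloneqq\frac{(1+2\varrho)^2}{1+\varrho}\,\theta .
\]
Set $s\coloneqq\sqrt{1-4\theta}>0$. Then $1+2\varrho=1/s$, $1+\varrho=(1+s)/(2s)$, and $\varrho(1+\varrho)=(1-s^2)/(4s^2)=\theta/s^2=\theta(1+2\varrho)^2$, so $c=\varrho$. Now take $(x,y),(x',y')\in\gra A$, write $\Delta x,\Delta y$ for the differences, and put $w\coloneqq\Delta y-\hat U\Delta x$. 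Expanding $\inner{w}{\Delta x-\hat Vw}$, and using self-adjointness of $\hat U,\hat V$ together with $\hat V\hat U=\varrho\Id$, I expect
\[
\inner{w}{\Delta x-\hat V w}=(1+2\varrho)\inner{\Delta x}{\Delta y}-(1+\varrho)\inner{\hat U\Delta x}{\Delta x}-\inner{\hat V\Delta y}{\Delta y}.
\]
Substituting the definitions \eqref{eq:hatUV} turns the right-hand side into
\[
(1+2\varrho)\bigl[\inner{\Delta x}{\Delta y}-\inner{U\Delta x}{\Delta x}-\inner{V\Delta y}{\Delta y}\bigr].
\]
Since $1+2\varrho=1/s>0$, $\hat A$ is monotone if and only if $A$ is $(U,V)$-semimonotone. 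The identity holds for any operator, not only for $A$.

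For maximality I use that $\mathcal T$ is a bijection preserving graph inclusions. Suppose $A$ is maximally $(U,V)$-semimonotone and $\hat A$ is not maximally monotone. Then some monotone $\widetilde{S}$ satisfies $\gra\hat A\subsetneq\gra\widetilde S$. Define $\widetilde A$ by $\gra\widetilde A\coloneqq\mathcal T^{-1}(\gra\widetilde S)$; by the identity it is $(U,V)$-semimonotone and strictly extends $A$, a contradiction. The converse direction is symmetric, using $\mathcal T$ in place of $\mathcal T^{-1}$.

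The main obstacle is only the bookkeeping in the expansion, in particular checking that the cross terms combine to $2\varrho\inner{\Delta x}{\Delta y}$. This relies on $\hat V\hat U$ being a scalar multiple of the identity, which is exactly where the hypothesis $UV=\theta\Id$ enters.
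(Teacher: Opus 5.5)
Your proposal is correct and follows essentially the same route as the paper. The paper likewise uses $\hat V\hat U=\varrho\Id$ to write the graph correspondence as the linear bijection $\Phi(x,y)=(y-\hat Ux,(1+\varrho)x-\hat Vy)$, which coincides with your $\mathcal{T}$. It then shows that the monotonicity inequality on $\gra\hat A$ equals $(1+2\varrho)$ times the $(U,V)$-semimonotonicity inequality on $\gra A$, and transfers maximality through the bijection, exactly as you do (you are merely more explicit in verifying $\varrho(1+\varrho)=\theta(1+2\varrho)^2$ and in spelling out the maximality argument).
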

\begin{proof}
	Note that $\varrho$ is a root of the quadratic equation $\varrho(1+\varrho) = \theta(1+2\varrho)^2$ such that $1+2\varrho>0$ and $1+\varrho>0$. Hence, $\hat{U}\hat{V} = \varrho \Id = \hat{V}\hat{U}$. We note further that 
	\begin{align*}
		\begin{array}{rl}
			y\in Ax \quad & \Longleftrightarrow \quad x\in (A-\hat{U})^{-1} (y-\hat{U}x) \\
			& \Longleftrightarrow \quad x-\hat{V}(y-\hat{U}x) \in \hat{A}(y-\hat{U}x) \\
			& \Longleftrightarrow \quad  (y-\hat{U}x , (1+\varrho)x-\hat{V}y ) \in \gra (\hat{A}),
		\end{array}
	\end{align*}
	where we used $\hat{V}\hat{U}=\varrho \Id$ in the last line. In other words, $(x,y)\in \gra(A)$ if and only if $\Phi(x,y) \in \gra(\hat{A})$, where $\Phi(x,y) \coloneqq (s,t) \coloneqq (y-\hat{U}x, (1+\varrho)x-\hat{V}y)$. Note that $\Phi:\H\times \H \to \H \times \H$ is a bijection with inverse $\Phi^{-1}(s,t) = (t+\hat{V}s, \hat{U}t+(1+\varrho)s)$. 
	
	Meanwhile, for any $(x,y),(x',y')\in \H\times \H$, and $(s,t),(s',t')\in \H\times \H$ with $(s,t) = \Phi(x,y)$ and $(s',t')=\Phi(x',y')$, we have 
	\begin{align}
		\begin{array}{rl}
			\inner{\Delta s}{\Delta t} \geq 0 & \Longleftrightarrow  \quad \inner{\Delta y - \hat{U}\Delta x}{(1+\varrho)\Delta x - \hat{V}\Delta y} \geq 0 \\
			& \Longleftrightarrow \quad (1+2\varrho)\inner{\Delta y }{\Delta x}- (1+\varrho)\inner{\hat{U}\Delta x}{\Delta x} - \inner{\Delta y}{\hat{V}\Delta y} \geq 0 \\
			& \Longleftrightarrow \quad \inner{\Delta y}{\Delta x} \geq \inner{U\Delta x}{\Delta x} + \inner{\Delta y}{V\Delta y},
		\end{array}
		\label{eq:equivalence_inequalities}
	\end{align}
	where $\Delta x= x-x'$, $\Delta y = y-y'$, $\Delta s = s-s'$ and $\Delta t = t-t'$. Thus, $\Phi$ is a bijection such that $(x,y)\in \gra(A)$ if and only if $\Phi(x,y)\in \gra(\hat{A})$, and \eqref{eq:equivalence_inequalities} shows that the $(U,V)$-semimonotonicity inequality for pairs in $\gra(A)$ holds if and only if the monotonicity inequality for the corresponding pairs in $\gra(\hat{A})$ holds. Therefore, $A$ is $(U,V)$-semimonotone if and only if $\hat{A}$ is monotone, and the same equivalence holds for maximality since $\Phi$ preserves strict graph inclusions.
\smartqedmark \end{proof}

The following extends the known relationship between  $\alpha \Id$-monotone and $\alpha \Id$-comonotone operators; cf. \cite[Lemmas 2.5 and 2.7]{Bauschke2021}.
\begin{corollary}
	\label{lemma:V-comonotone}
	The following are equivalent:
	\begin{enumerate}[(a)]
		\item $A$ is maximally $V$-comonotone.
		\item $A^{-1}-V$ is maximally monotone.
		\item $A^{-1}$ is maximally $V$-monotone.
	\end{enumerate}
\end{corollary}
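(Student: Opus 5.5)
The plan is to obtain all three equivalences directly from \cref{prop:equivalence_semimonotone}, together with an elementary graph-swapping argument.

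For (a)$\Leftrightarrow$(b), apply \cref{prop:equivalence_semimonotone} with $U=0$. Then $UV=0=\theta\Id$ with $\theta=0<1/4$, which gives $\varrho=-\tfrac12+\tfrac12=0$. Hence $\hat U=0$ and $\hat V=V$, so $\hat A=(A-0)^{-1}-V=A^{-1}-V$. The proposition, in its maximal version, then says exactly that $A$ is maximally $(0,V)$-semimonotone if and only if $A^{-1}-V$ is maximally monotone. This is the equivalence (a)$\Leftrightarrow$(b).

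For (b)$\Leftrightarrow$(c), apply the same proposition with the roles swapped: take $U=V$ and $V=0$, and apply it to the operator $A^{-1}$. Again $\theta=0$, so $\varrho=0$, $\hat U=V$ and $\hat V=0$. Thus $A^{-1}$ is maximally $(V,0)$-semimonotone, that is, maximally $V$-monotone, if and only if $(A^{-1}-V)^{-1}$ is maximally monotone. By \cref{lemma:maximalmonotone_properties}(iii) applied in both directions (inversion is an involution on graphs), this holds if and only if $A^{-1}-V$ is maximally monotone.

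Alternatively, the link (a)$\Leftrightarrow$(c) can be checked directly. The map $(x,y)\mapsto(y,x)$ is a bijection from $\gra A$ onto $\gra A^{-1}$. Under this map the comonotonicity inequality $\inner{\Delta x}{\Delta y}\ge\inner{\Delta y}{V\Delta y}$ turns into the $V$-monotonicity inequality for $A^{-1}$. Since the swap preserves strict graph inclusions, maximality also transfers between the two operators.

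The only delicate point is the maximality transfer. It is already handled inside \cref{prop:equivalence_semimonotone} through the bijection $\Phi$, and in the swap argument through preservation of graph inclusions. Both arguments are routine, so no real obstacle is expected.
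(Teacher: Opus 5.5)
Your proposal is correct and matches the paper's proof. The paper likewise obtains (a)$\Leftrightarrow$(b) by setting $U=0$ in \cref{prop:equivalence_semimonotone}, and it gets (a)$\Leftrightarrow$(c) directly from the definitions via the graph swap you give as your alternative; your other route to (b)$\Leftrightarrow$(c), applying the proposition to $A^{-1}$ with modulus $(V,0)$ and then inverting, is also valid but unnecessary once (a)$\Leftrightarrow$(c) is established.
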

\begin{proof}
	The equivalence of (a) and (c) is immediate from the definitions. The equivalence of (a) and (b) follows by setting $U=0$ in \cref{prop:equivalence_semimonotone}.
\smartqedmark \end{proof}

Finally, we prove that the resolvent $J_{\gamma D^{-1}A} = (\Id + \gamma D^{-1}A)^{-1}$ is single-valued and has full domain for an appropriately chosen $\gamma$.
\begin{proposition}\label{prop:resolvent_singlevalued_fulldomain}
	Let $A:\H\toset \H$, and let $\hat{U},\hat{V},D\in \mathcal{B}(\H)$ be self-adjoint operators where $D$ is strongly monotone. Suppose that $\hat{A} \coloneqq (A-\hat{U})^{-1} - \hat{V}$ is maximally monotone and $\gamma>0$ is chosen such that $R_{\gamma}\coloneqq \gamma \hat{U}+D$ and  $S_{\gamma} \coloneqq \gamma R_{\gamma}^{-1} + \hat{V}$ are  strongly monotone. Then
	\begin{enumerate}
		\item  $J_{\gamma D^{-1} A}$ is single-valued and has full domain. 
		\item $J_{\gamma D^{-1}A}$ is $\ell$-Lipschitz continuous, with $\ell \coloneqq \norm{R_{\gamma}^{-1}} \left(\norm{D} + \gamma \norm{S_{\gamma}^{-1/2}} \, \norm{S_{\gamma}^{-1/2} R_{\gamma}^{-1}D} \right)$.
	\end{enumerate}
\end{proposition}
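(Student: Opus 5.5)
The plan is to transfer the resolvent inclusion to the monotone operator $\hat{A}$ through the graph bijection used in the proof of \cref{prop:equivalence_semimonotone}, and then recognize a resolvent of a maximally monotone operator in a weighted metric. Note that $R_\gamma$ is self-adjoint and strongly monotone, hence invertible with $R_\gamma^{-1}$ self-adjoint; therefore $S_\gamma$ is self-adjoint, and by hypothesis it is strongly monotone.

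\textbf{Step 1 (reduction).} Fix $w\in\H$. By definition, $x\in J_{\gamma D^{-1}A}(w)$ if and only if $D(w-x)\in\gamma Ax$, that is, $D(w-x)=\gamma y$ for some $y\in Ax$. Put $s\coloneqq y-\hat{U}x$. From the proof of \cref{prop:equivalence_semimonotone} (whose chain of equivalences uses only $\hat A=(A-\hat U)^{-1}-\hat V$, not $\hat U\hat V=\varrho\Id$), we have $y\in Ax$ if and only if $x-\hat{V}s\in\hat{A}s$. Substituting $y=\hat{U}x+s$ into $Dw=Dx+\gamma y$ gives
\[
Dw=R_\gamma x+\gamma s, \qquad\text{so}\qquad x=R_\gamma^{-1}(Dw-\gamma s).
\]
Plugging this into $x-\hat V s\in\hat A s$ shows that $x\in J_{\gamma D^{-1}A}(w)$ if and only if
\[
R_\gamma^{-1}Dw\in(\hat{A}+S_\gamma)s \qquad\text{and}\qquad x=R_\gamma^{-1}(Dw-\gamma s).
\]
This is a bijective correspondence between solutions $x$ and solutions $s$, since $s$ is recovered from $x$ as $s=\gamma^{-1}(Dw-R_\gamma x)$.

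\textbf{Step 2 (existence and uniqueness of $s$).} The inclusion $u\in(\hat A+S_\gamma)s$ is equivalent to $S_\gamma^{-1}u\in(\Id+S_\gamma^{-1}\hat{A})s$. By \cref{lemma:maximalmonotone_properties}(v), $S_\gamma^{-1}\hat A$ is maximally monotone in the inner product $\inner{\cdot}{\cdot}_{S_\gamma}$. By \cref{lemma:maximalmonotone_properties}(i), applied in that Hilbert space, its resolvent is single-valued, has full domain and is firmly nonexpansive with respect to $\norm{\cdot}_{S_\gamma}$. Hence $s=J_{S_\gamma^{-1}\hat A}(S_\gamma^{-1}R_\gamma^{-1}Dw)$ exists and is unique for every $w$. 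Consequently $x$ is unique as well, which proves (i).

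\textbf{Step 3 (Lipschitz estimate).} For two points $w,w'$, set $\Delta u=R_\gamma^{-1}D\Delta w$. Nonexpansiveness in the $S_\gamma$-norm gives
\[
\norm{\Delta s}_{S_\gamma}\le\norm{S_\gamma^{-1}\Delta u}_{S_\gamma}=\norm{S_\gamma^{-1/2}\Delta u}.
\]
Since $\Delta s=S_\gamma^{-1/2}\bigl(S_\gamma^{1/2}\Delta s\bigr)$, it follows that
\[
\norm{\Delta s}\le\norm{S_\gamma^{-1/2}}\,\norm{S_\gamma^{-1/2}R_\gamma^{-1}D}\,\norm{\Delta w}.
\]
Then $\Delta x=R_\gamma^{-1}(D\Delta w-\gamma\Delta s)$ and the triangle inequality yield exactly the constant $\ell$ in (ii).

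The only delicate point is the bookkeeping in Step 1: one must confirm that the inversion of the graph map yields an equivalence in both directions. The remaining steps are routine once $\hat A+S_\gamma$ is recast as a resolvent in the $S_\gamma$-metric.
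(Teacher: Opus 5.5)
Your proof is correct and follows essentially the same route as the paper: the same reduction of the resolvent inclusion to $R_\gamma^{-1}Dw\in(\hat A+S_\gamma)s$ with $x=R_\gamma^{-1}(Dw-\gamma s)$, the same use of maximal monotonicity of $S_\gamma^{-1}\hat A$ in the $S_\gamma$-inner product, and the same Lipschitz bookkeeping that produces $\ell$. Your explicit remark that $s$ is recovered from $x$ via $s=\gamma^{-1}(Dw-R_\gamma x)$ makes the transfer of single-valuedness from $s$ to $x$ slightly more transparent than in the paper.
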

\begin{proof}
	Let $w\in \H$. Note that 
	\begin{align*}
		y=J_{\gamma D^{-1}A}w \quad & \Longleftrightarrow \quad Dw\in \gamma (A-\hat{U})y + (\gamma \hat{U}+D)y =  \gamma (A-\hat{U})y + R_{\gamma} y\\ 
		& \Longleftrightarrow \quad  y \in (A-\hat{U})^{-1} z \quad \text{where} \quad z =\frac{1}{\gamma}(Dw - R_{\gamma}y) \\
		& \Longleftrightarrow \quad y- \hat{V}z \in \hat{A}z\quad \text{where} \quad y= R_{\gamma}^{-1}(Dw-\gamma z) \\
		& \Longleftrightarrow \quad R_{\gamma}^{-1}(Dw-\gamma z) -\hat{V}z \in \hat{A}z \\ 
		& \Longleftrightarrow \quad  R_{\gamma}^{-1} Dw \in \hat{A}z + (\gamma R_{\gamma}^{-1} + \hat{V})z \\
		& \Longleftrightarrow \quad S_{\gamma}^{-1}R_{\gamma}^{-1} Dw \in (S_{\gamma}^{-1}\hat{A} + \Id ) z.
	\end{align*}
	Since $\hat{A}$ is maximally monotone and $S_{\gamma}$ is strongly monotone, $S_{\gamma}^{-1} \hat{A}$ is maximally monotone when $\H$ is equipped with the inner product $\inner{\cdot}{\cdot}_{S_{\gamma}}$ by \cref{lemma:maximalmonotone_properties}(v). In turn, $\ran (S_{\gamma}^{-1} \hat{A}+\Id) = \H$ by \cref{lemma:maximalmonotone_properties}(i), and therefore there exists a unique $z\in \H$ such that $S_{\gamma}^{-1}R_{\gamma}^{-1} Dw \in (S_{\gamma}^{-1}\hat{A} + \Id ) z$. By the above equivalences, if we set $y=R_{\gamma}^{-1}(Dw-\gamma z)$, then $y = J_{\gamma D^{-1}A}w$. This proves that $J_{\gamma D^{-1}A}$ is single-valued with full domain. 
	
	To prove (ii), note that by \cref{lemma:maximalmonotone_properties}(i), $(S_{\gamma}^{-1}\hat{A} + \Id )^{-1} $ is $1$-Lipschitz in the $S_{\gamma}$-norm. If $z_i = (S_{\gamma}^{-1}\hat{A} + \Id ) ^{-1}(S_{\gamma}^{-1} R_{\gamma}^{-1}Dw_i)$ for $i=1,2$, then since  $\norm{S_{\gamma}^{-1}}^{-1} \norm{\Delta z}^2 \leq \norm{\Delta z}_{S_{\gamma}}^2$ (see \cref{sec:boundedlinear_definitions}), we have  
	\[ \norm{\Delta z}\leq  \norm{S_{\gamma}^{-1}}^{1/2}\norm{ \Delta z}_{S_{\gamma}} \leq  \norm{S_{\gamma}^{-1/2}}\norm{ S_{\gamma}^{-1} R_{\gamma}^{-1}D\Delta w}_{S_{\gamma}} =  \norm{S_{\gamma}^{-1/2}} \, \norm{S_{\gamma}^{-1/2} R_{\gamma}^{-1}D\Delta w} ,\]
	where $\Delta z \coloneqq z_1-z_2$ and $\Delta w \coloneqq w_1-w_2$. Then the claim immediately follows, noting that $y_i \coloneqq R_{\gamma}^{-1} (Dw_i - \gamma z_i) = J_{\gamma D^{-1}A}w_i$.
\smartqedmark \end{proof}

\begin{remark}[On strong monotonicity conditions]\label{remark:strongmonotonicityconditions}
	Suppose that $D$ is $\delta$-strongly monotone. Then
	$R_{\gamma} \succeq (\gamma \lambda_{\min}(\hat{U}) + \delta)\Id$,
	and hence $R_{\gamma}$ is strongly monotone whenever
	$\gamma \lambda_{\min}(\hat{U})+\delta > 0$.
	Moreover, since
	$R_{\gamma} \preceq (\gamma \norm{\hat{U}} + \norm{D})\Id$,
	we obtain
	$S_{\gamma} \succeq \left(\frac{\gamma}{\gamma \norm{\hat{U}}+\norm{D}}+\lambda_{\min}(\hat{V})\right)\Id$.
	Thus, $S_{\gamma}$ is strongly monotone whenever
	$\frac{\gamma}{\gamma \norm{\hat{U}}+\norm{D}}+\lambda_{\min}(\hat{V})>0$.
	
	Suppose that the following holds:
	\begin{equation}\label{eq:C(U,V)_appendix}
		\begin{cases}
			1+\norm{\hat U}\lambda_{\min}(\hat V)>0 & \text{if }\hat U\succeq0 \text{ and } \hat V\not \succeq0,	\\
			-\lambda_{\min}(\hat V)\bigl(-\norm D\,\lambda_{\min}(\hat U)+\delta\norm{\hat U}\bigr)<\delta & \text{if } \hat U\not\succeq0 \text{ and } \hat V\not\succeq0.
		\end{cases}
	\end{equation}
	Then, for any
	$
		\gamma \in \left(
		\frac{\norm D\,(-\lambda_{\min}(\hat V))_+}
		{1+\norm{\hat U}\lambda_{\min}(\hat V)},
		\frac{\delta}{(-\lambda_{\min}(\hat U))_+}
		\right)$
	(this interval is a nonempty subset of $(0,+\infty)$ by \eqref{eq:C(U,V)_appendix}), both $R_{\gamma}$ and $S_{\gamma}$ are strongly monotone.  
\end{remark}

\begin{remark}\label{remark:scalar_UVD}
	Suppose that $D=\delta \Id$ with $\delta>0$, and that
	$
	\hat U=\hat\mu \Id
	$
	and
	$
	\hat V=\hat\nu \Id
	$
	for some $\hat\mu,\hat\nu\in\mathbb R$. Then
	$
	R_\gamma=(\delta+\gamma\hat\mu)\Id,$ and
	$S_\gamma=\left(\hat\nu+\frac{\gamma}{\delta+\gamma\hat\mu}\right)\Id.
	$
	Hence, $R_\gamma$ and $S_\gamma$ are strongly monotone if and only if $
	\delta+\gamma\hat\mu>0$ and $
	\delta\hat\nu+\gamma(1+\hat\mu\hat\nu)>0.$
	In particular, if $1+\hat\mu\hat\nu>0$, then this is equivalent to
	\begin{equation}\label{eq:chi_exact}
		\gamma\in \left(
		\frac{\delta(-\hat\nu)_+}{1+\hat\mu\hat\nu},
		\frac{\delta}{(-\hat\mu)_+}
		\right).
	\end{equation}
	The above interval is sharper than the sufficient range given in \cref{remark:strongmonotonicityconditions}, and in fact is exact in this scalar setting. 
\end{remark}

\begin{remark}\label{remark:scalarsemimonotone}
	If $A$ is maximally $(\mu \Id,\nu \Id)$-semimonotone with $\mu\nu<1/4$, then $\hat{A} = (A-\hat{U})^{-1} - \hat{V}$ is maximally monotone by \cref{prop:equivalence_semimonotone}, where $(\hat{U},\hat{V})=(\hat{\mu}\Id,\hat{\nu}\Id)$ is given in \eqref{eq:hatUV}. That is, $\hat\mu = \frac{1+2\varrho}{1+\varrho}\mu$ and $\hat{\nu}=(1+2\varrho)\nu$. In this case, $1+\hat{\mu}\hat{\nu}=1+\varrho>0$. Thus, from \eqref{eq:chi_exact}, the actual interval where $R_{\gamma}$ and $S_{\gamma}$ are strongly monotone is
	\begin{equation*}
		\gamma \in 
		\left(
		\frac{2\delta\max\{-\nu,0\} }{1+\sqrt{1-4\mu\nu}}
		\;,\;
		\frac{\delta(1+\sqrt{1-4\mu\nu})}{2\max\{ -\mu,0\}}
		\right).
	\end{equation*}
	When $\delta=1$, this recovers exactly \cite[Proposition 4.12]{EPLP25}, which reduces to the interval $(0,1/\max\{-\mu,0\})$ when $\nu=0$ (cf. \cite[Proposition 3.4]{DP18}) and to $(\max\{ -\nu,0\},+\infty)$ when $\mu=0$ (cf. \cite[Theorem 2.16]{Bauschke2021} and \cite[Proposition 3.7]{BDP22}).  
\end{remark}

\section{Omitted proofs}

\subsection{Proof of \cref{prop:assume_blanket_gensubspace_holds}}\label{app:AssumptionA_holds}
	
\begin{proof}
		Part (ii) follows from \cref{assume:blanket_multioperator}(ii) and the definition of $\bB$, and part (iii) follows from \eqref{eq:PR}. 
		We prove (i).  
		Note that $(\bU,\bV)$-semimonotonicity follows immediately from the definition of $\bA$ and the $(U_i,V_i)$-semimonotonicity of each $A_i$ in \cref{assume:blanket_multioperator}(i). We now prove maximality. Let $(\bx,\by)\in \bcalH\times \bcalH$ and suppose that for all $(\bx',\by')\in \gra(\bA)$,
		\begin{equation}\label{eq:relation}
			\phi(\bx,\by; \bx',\by')\coloneqq 
			\inner{\bx-\bx'}{\by-\by'} 
			- \inner{\bx-\bx'}{\bU(\bx-\bx')} 
			- \inner{\by-\by'}{\bV(\by-\by')} \geq 0 .
		\end{equation}
		Observe that $\phi(\bx,\by;\bx',\by')=\sum_{i=1}^n \phi_i(x_i,y_i;x_i',y_i')$, where
		\[
		\phi_i(x_i,y_i;x_i',y_i')\coloneqq 
		\inner{x_i-x_i'}{y_i-y_i'}
		- \inner{x_i-x_i'}{U_i(x_i-x_i')}
		- \inner{y_i-y_i'}{V_i(y_i-y_i')}.
		\]
		Define
		\[
		J \coloneqq \Bigl\{ j\in \{1,\dots,n\} : \exists (x_j',y_j')\in \gra(A_j)\ \text{s.t.}\ \phi_j(x_j,y_j;x_j',y_j')< 0\Bigr\}.
		\]
		It suffices to show that $J=\varnothing$. Indeed, in this case, $\phi_i(x_i,y_i;x_i',y_i')\geq 0$ for all $(x_i',y_i')\in \gra(A_i)$ and all $i$, and hence by maximal $(U_i,V_i)$-semimonotonicity of $A_i$, we obtain $(x_i,y_i)\in \gra(A_i)$ for every $i$. Therefore $(\bx,\by)\in \gra(\bA)$, as desired.
		
		For the sake of contradiction, suppose that $J\neq \varnothing$. For any $i\notin J$, we have $\phi_i(x_i,y_i;x_i',y_i')\geq 0$ for all $(x_i',y_i')\in \gra(A_i)$. By maximal $(U_i,V_i)$-semimonotonicity of $A_i$, it follows that $(x_i,y_i)\in \gra(A_i)$ for every $i\notin J$. In particular, $\phi_i(x_i,y_i;x_i,y_i)=0$ for all $i\notin J$.
		
		For each $j\in J$, fix a point $(\hat x_j,\hat y_j)\in \gra(A_j)$ such that $
		\phi_j(x_j,y_j;\hat x_j,\hat y_j)<0.$
		Define $(\bx^\star,\by^\star)\in \bcalH\times \bcalH$ by
		\[
		(x_i^\star,y_i^\star)\coloneqq
		\begin{cases}
			(x_i,y_i), & i\notin J,\\
			(\hat x_i,\hat y_i), & i\in J.
		\end{cases}
		\]
		Then $(\bx^\star,\by^\star)\in \gra(\bA)$ since $(x_i,y_i)\in \gra(A_i)$ for all $i\notin J$ and $(\hat x_i,\hat y_i)\in \gra(A_i)$ for all $i\in J$. Evaluating \eqref{eq:relation} at $(\bx',\by')=(\bx^\star,\by^\star)$ and using the decomposition of $\phi$, we obtain
		\[
		0 \leq\phi(\bx,\by;\bx^\star,\by^\star)
		= \sum_{i\notin J}\phi_i(x_i,y_i;x_i,y_i)
		+ \sum_{j\in J}\phi_j(x_j,y_j;\hat x_j,\hat y_j)
		= \sum_{j\in J}\phi_j(x_j,y_j;\hat x_j,\hat y_j) < 0,
		\]
		which is a contradiction. Hence,  $J=\varnothing$, and therefore $\bA$ is maximally $(\bU,\bV)$-semimonotone.
\smartqedmark \end{proof}

\subsection{Proof of \cref{prop:Lambda_nonempty}}\label{app:prop_Lambda_nonempty}

\begin{proof}
	By \cref{prop:equivalence_semimonotone}, $(A_i-\hat{U}_i)^{-1} - \hat{V}_i$ is maximally monotone. Hence, by \cref{prop:resolvent_singlevalued_fulldomain}, $J_{\gamma \bD^{-1} \bA}$ is single-valued, has full domain and is Lipschitz continuous if $(R_i)_{\gamma}\coloneqq \gamma \hat{U}_i + D_i$ and $(S_i)_{\gamma} \coloneqq \gamma (R_i)_{\gamma}^{-1} + \hat{V}_i$ are strongly monotone for all $i$. By \cref{remark:strongmonotonicityconditions}, these strong monotonicity conditions hold when $\gamma \in (\sigma,\varsigma)$. This proves the first claim. Noting the equivalence \eqref{eq:Phigamma_J_Dinverse_A}, it holds that $(\sigma, \varsigma)\subseteq \Lambda$, thus completing the proof. 
\smartqedmark \end{proof}

\subsection{Proof of \cref{prop:Lambda_nonempty_specialcase}}\label{app:prop_Lambda_nonempty_special}

\begin{proof}
The proof proceeds exactly as in \cref{app:prop_Lambda_nonempty}, except that one uses \cref{remark:scalar_UVD,remark:scalarsemimonotone} instead of \cref{remark:strongmonotonicityconditions}.
\smartqedmark \end{proof}

\section{Miscellaneous results}

\begin{proposition}\label{prop:DR_multioperator}
	The multioperator algorithm in \cite[Algorithm 1]{AT25} with equal weights $\lambda_i=\frac{1}{n-1}$ for $i=1,\dots,n-1$ coincides with \cref{algo:full} with the following choices: $\bL = L\otimes \Id$ and $\bM = M\otimes \Id$  with 
	\begin{equation*}
		M=
		\frac{1}{n-1}
		\begin{bmatrix}
			1 & 0 & \cdots & 0\\
			0 & 1 & \cdots & 0\\
			\vdots & \vdots & \ddots & \vdots\\
			0 & 0 & \cdots & 1\\
			-1 & -1 & \cdots & -1
		\end{bmatrix}
		\in \mathbb{R}^{\,n\times (n-1)} \quad \text{and} \quad L=
		\begin{bmatrix}
			\frac{1}{n-1} & 0 & \cdots & 0 & 0\\
			0 & \frac{1}{n-1} & \cdots & 0 & 0\\
			\vdots & \vdots & \ddots & \vdots & \vdots\\
			0 & 0 & \cdots & \frac{1}{n-1} & 0\\
			-\frac{2}{n-1} & -\frac{2}{n-1} & \cdots & -\frac{2}{n-1} & 1
		\end{bmatrix}
		\in \mathbb{R}^{\,n\times n}.
	\end{equation*}
\end{proposition}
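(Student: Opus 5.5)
The plan is a direct verification: write out \cref{algo:full} for the given $\bM=M\otimes\Id$ and $\bL=L\otimes\Id$ (with $p=0$, so no $\bP,\bR,\bB$ terms), and compare it line by line with \cite[Algorithm 1]{AT25} for $\lambda_i=\frac1{n-1}$.

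\emph{Step 1 (admissibility).} First check that the choice fits the framework.
\begin{itemize}
\item $M^\top y=0$ forces $y_i=y_n$ for all $i<n$. Hence $\ker(\bM^*)=\bOmega$.
\item $M$ has full column rank, so $\bM$ has closed range.
\item The entries of $L$ sum to $(n-1)\cdot\frac1{n-1}+1-(n-1)\frac{2}{n-1}=0$. Thus \cref{assume:blanket2_components} holds.
\item $L$ is lower triangular, so it fits the explicit \cref{algo:full}.
\end{itemize}

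\emph{Step 2 (unwinding the splitting \eqref{eq:DN}).} The diagonal part is $\bD=\diag(\frac1{n-1}\Id,\dots,\frac1{n-1}\Id,\Id)$. The only nonzero blocks of $\bN=\bD-\bL$ are $N_{nj}=\frac2{n-1}\Id$ for $j<n$. Substituting into \cref{algo:full} gives, for $i\le n-1$,
\[
x_i^k\in J_{\gamma(n-1)A_i}(z_i^k).
\]
For the last operator,
\[
x_n^k\in J_{\gamma A_n}\Bigl(\tfrac1{n-1}\textstyle\sum_{j=1}^{n-1}(2x_j^k-z_j^k)\Bigr).
\]
The dual update is
\[
z_i^{k+1}=z_i^k-\tfrac{\lambda_k}{n-1}(x_i^k-x_n^k).
\]

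\emph{Step 3 (matching).} Next, recall \cite[Algorithm 1]{AT25}. It is Douglas--Rachford applied to Campoy's reduced product space reformulation \cite{Cam22} with weights $\lambda_i$. Its first $n-1$ steps are resolvents of $\frac{\gamma}{\lambda_i}A_i$, and its last step is a resolvent of $\gamma A_n$ evaluated at the weighted average $\sum_i\lambda_i(2x_i-z_i)$. Its governing update is $z_i\leftarrow z_i+\theta_k(x_n-x_i)$. Setting $\lambda_i=\frac1{n-1}$, the three formulas of Step 2 match these line by line, provided the relaxation parameters are identified as $\theta_k=\lambda_k/(n-1)$ and the stepsizes agree.

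The main obstacle is bookkeeping of scalings. The governing variable in \cite{AT25} may be scaled differently, e.g.\ by $\lambda_i$ or $\gamma$, relative to $\bz^k$ here. If so, I would introduce the linear change of variables $\bz\mapsto c\,\bz$ and check that it maps one iteration onto the other, adjusting $\lambda_k$ accordingly.
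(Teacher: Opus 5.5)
Your proposal is correct and is the same direct verification the paper intends; the paper's own proof only says it is straightforward. Unwinding $\bD=\diag(\tfrac{1}{n-1}\Id,\dots,\tfrac{1}{n-1}\Id,\Id)$, $N_{nj}=\tfrac{2}{n-1}\Id$ and $\bM$ gives $x_i^k\in J_{\gamma(n-1)A_i}(z_i^k)$, $x_n^k\in J_{\gamma A_n}\bigl(\tfrac{1}{n-1}\sum_{j<n}(2x_j^k-z_j^k)\bigr)$ and $z_i^{k+1}=z_i^k+\tfrac{\lambda_k}{n-1}(x_n^k-x_i^k)$, which is the equal-weight iteration of \cite{AT25} with relaxation $\lambda_k/(n-1)$, and your admissibility checks in Step~1 are correct though not needed for the coincidence itself.
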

\begin{proof}
	The proof is straightforward.
\smartqedmark \end{proof}

\begin{proposition}\label{prop:DR_twooperator}
	Let $L=
	\begin{bmatrix}
		1 & 0\\
		-2 & 1
	\end{bmatrix}$, $M =
	\begin{bmatrix}
		1 \\
		-1 
	\end{bmatrix}$, $U=\diag(\mu_1,\mu_2)$ and $V=\diag(\nu_1,\nu_2)$. Set\[
	a\coloneqq
	\begin{cases}
		0, & \mu_1=\mu_2=0,\\[1mm]
		\dfrac{\mu_1\mu_2}{\mu_1+\mu_2}, & \mu_1+\mu_2>0,
	\end{cases}
	\qquad\text{and} \qquad 
	b\coloneqq \begin{cases}
		0, & \nu_1=\nu_2= 0\\[1mm] 
		\dfrac{\nu_1\nu_2}{\nu_1+\nu_2}, & \nu_1+\nu_2>0.
	\end{cases}
	\]  Then 
	\begin{enumerate}
		\item $\Gamma{}$ given in \cref{tab:Gammastar} can be expressed as
		\[\Gamma{}
		=
		\left\{\gamma>0:\gamma+\min(0,a\gamma^2+b)>0\right\},\]
		and $\Gamma{} \cap  (\sigma,\varsigma)=\Gamma{}$, where $(\sigma,\varsigma)$ is given in \cref{tab:Lambdastar_scalarUV}. 
		\item $\beta_{\gamma}$ given in \cref{tab:notation} can be expressed as $\beta_{\gamma} = 1+\frac{1}{\gamma}b + \gamma a$. 
	\end{enumerate}
	
\end{proposition}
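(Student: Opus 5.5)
The plan is to compute every object in \cref{tab:notation} explicitly. Since $\bL=L\otimes\Id$, $\bM=M\otimes\Id$, $\bU=U\otimes\Id$ and $\bV=V\otimes\Id$, all block decompositions factor through $\Re^2$. It therefore suffices to work with $2\times2$ matrices relative to the orthonormal basis
\[
f\coloneqq \tfrac{1}{\sqrt2}(1,1)^\top,\qquad e\coloneqq\tfrac{1}{\sqrt2}(1,-1)^\top,
\]
for which $\Omega=\Span\{f\}$ and $\Omega^\perp=\Span\{e\}$.

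\textbf{Basic matrices.} First, $L_s=\begin{bmatrix}1&-1\\-1&1\end{bmatrix}=2ee^\top$, so $(L_s^{1/2})^\dagger=\tfrac{1}{\sqrt2}ee^\top$. Also $M=\sqrt2\,e$, and a direct check gives $L^\top f=-e$.

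\textbf{Schur complements.} The entries of $U$ in this basis are
\[
f^\top Uf=e^\top Ue=\tfrac{\mu_1+\mu_2}{2},\qquad f^\top Ue=\tfrac{\mu_1-\mu_2}{2}.
\]
When $\mu_1+\mu_2>0$ this gives
\[
\bX=U/U_{11}=\Big(\tfrac{\mu_1+\mu_2}{2}-\tfrac{(\mu_1-\mu_2)^2}{2(\mu_1+\mu_2)}\Big)ee^\top=2a\,ee^\top,
\]
and trivially $\bX=0=2a\,ee^\top$ when $\mu_1=\mu_2=0$. In the same way, $V/V_{22}=2b\,ff^\top$. Here one uses the Case~II requirements: $V_{22}=\tfrac{\nu_1+\nu_2}{2}ee^\top\succeq0$, and the range condition (or \cref{lemma:UV_specialcase}) forces either $\nu_1+\nu_2>0$ or $\nu_1=\nu_2=0$. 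Combining with $L^\top f=-e$ yields $L^\top(V/V_{22})L=2b\,ee^\top$.

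\textbf{Expression for $\Gamma$.} Substituting into \cref{prop:Gammastar_cocoercivity}(iii)(b) gives
\[
(L_s^{1/2})^\dagger\big[\gamma^2\bX+L^\top(V/V_{22})L\big](L_s^{1/2})^\dagger=(a\gamma^2+b)\,ee^\top .
\]
This matrix has eigenvalues $a\gamma^2+b$ (on $e$) and $0$ (on $f$), so its $\lambda_{\min}$ equals $\min(0,a\gamma^2+b)$, which is the claimed form of $\Gamma$. In Case~I with $\bB=0$ we have $\Theta_\gamma=0$ and $b=0$, and the Case~I formula of \cref{prop:Gammastar_cocoercivity}(iii)(a) reduces to the same expression.

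\textbf{Expression for $\beta_\gamma$.} Similarly,
\[
\widehat\Xi_\gamma=L_s+\tfrac1\gamma L^\top(V/V_{22})L+\gamma\bX=2c\,ee^\top,\qquad c\coloneqq1+\tfrac b\gamma+a\gamma .
\]
For $\gamma\in\Gamma$ we have $c>0$, since $\gamma c=\gamma+a\gamma^2+b>0$. Then $(\widehat\Xi_\gamma^{1/2})^\dagger M=\tfrac{1}{\sqrt{2c}}\,ee^\top\sqrt2\,e=c^{-1/2}e$, whose squared norm is $1/c$. Hence $\beta_\gamma=c$, which proves (ii).

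\textbf{The inclusion $\Gamma\subseteq(\sigma,\varsigma)$.} This remaining claim in (i) is the step I expect to be the main obstacle. Here $\delta_1=\delta_2=1$, since these are the diagonal entries of $L$. Set $q(\gamma)\coloneqq a\gamma^2+\gamma+b$, so that $\Gamma=\{\gamma>0:\gamma>0 \text{ and } q(\gamma)>0\}$ whenever $a\gamma^2+b<0$, and $\Gamma$ is all of $(0,+\infty)$ otherwise. I would first treat the pure cases.
\begin{itemize}
\item If $\mu=0$ and $\nu_1<0<\nu_2$, then $b<0$ and $\gamma\in\Gamma$ means $\gamma>-b=-\nu_1\cdot\tfrac{\nu_2}{\nu_1+\nu_2}\ge-\nu_1=\sigma$, because $\tfrac{\nu_2}{\nu_1+\nu_2}\ge1$.
\item If $\nu=0$ and $\mu_1<0$, then $a<0$ and $\gamma\in\Gamma$ means $\gamma<-1/a=-\tfrac1{\mu_1}-\tfrac1{\mu_2}\le-\tfrac1{\mu_1}=\varsigma$, because $-1/\mu_2<0$.
\end{itemize}
For the mixed semimonotone case, where the formulas for $\sigma$ and $\varsigma$ involve $\sqrt{1-4\mu_j\nu_j}$, I would first try to show directly that $q(\sigma)\le0$ and $q(\varsigma)\le0$. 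Since $q$ is concave when $a<0$ (and monotone or affine otherwise), $\Gamma$ is then an interval of positive values of $q$, and the inequalities $q(\sigma)\le0$, $q(\varsigma)\le0$ would force $\Gamma\subseteq(\sigma,\varsigma)$. If this direct evaluation turns out to be messy, the fallback is an argument by monotonicity of $q$ in the parameters $\mu_i,\nu_i$, bounding $a$ and $b$ by their single-index counterparts as in the two pure cases above.
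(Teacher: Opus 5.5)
Your derivation of the formula for $\Gamma$ and of $\beta_\gamma=1+\frac b\gamma+a\gamma$ is correct and is the paper's computation. The paper writes everything as multiples of $L_s$ and $\begin{bmatrix}1&1\\1&1\end{bmatrix}$, which are your $2ee^\top$ and $2ff^\top$. You also justify $X=aL_s$, $V/V_{22}=2b\,ff^\top$ and the restriction ($\nu_1+\nu_2>0$ or $\nu=0$), which the paper takes for granted.

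The gap is the inclusion $\Gamma\subseteq(\sigma,\varsigma)$, which you settle only in the two pure cases. Your primary plan for the remaining cases does not suffice as stated. When some $\mu_i<0$ and some $\nu_j<0$, we have $a<0$ and $b<0$. Then $q(\gamma)=a\gamma^2+\gamma+b$ is concave with $q(0)<0$, and $\Gamma=\{\gamma>0:q(\gamma)>0\}=(r_1,r_2)$ with $r_1>0$. Knowing $q(\sigma)\le0$ and $q(\varsigma)\le0$ only says $\sigma,\varsigma\notin(r_1,r_2)$. It does not say $\sigma\le r_1$ and $\varsigma\ge r_2$; a priori both could lie to the right of $r_2$. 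Your description of $\Gamma$ also mixes a pointwise condition with a set-level one. The clean statement is $\Gamma=\{\gamma>0:q(\gamma)>0\}$, since $a\gamma^2+b\ge0$ already forces $q(\gamma)\ge\gamma>0$. Finally, evaluating $q$ directly at the square-root formulas for $\sigma,\varsigma$ is messy without the observation below.

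The missing idea is that, with $\delta_i=1$, the numbers $\frac{-2\nu_i}{1+\sqrt{1-4\mu_i\nu_i}}$ and $\frac{-(1+\sqrt{1-4\mu_i\nu_i})}{2\mu_i}$ are exactly the roots of $q_i(\gamma)\coloneqq\mu_i\gamma^2+\gamma+\nu_i$. Checking the four sign patterns of $(\mu_i,\nu_i)$, using $\mu_i\nu_i<1/4$, gives $(\sigma,\varsigma)=\{\gamma>0: q_1(\gamma)>0 \text{ and } q_2(\gamma)>0\}$. The parameter comparison you allude to then takes one line. We have $\mu_i-a=\mu_i^2/(\mu_1+\mu_2)\ge0$ and $\nu_i-b=\nu_i^2/(\nu_1+\nu_2)\ge0$, and both differences vanish when $\mu=0$ or $\nu=0$. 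Hence $q_i(\gamma)=q(\gamma)+(\mu_i-a)\gamma^2+(\nu_i-b)\ge q(\gamma)>0$ for every $\gamma\in\Gamma$. This gives $\Gamma\subseteq(\sigma,\varsigma)$ uniformly in all cases, including your pure ones. The paper itself only asserts this inclusion as ``directly verified'', so this argument is what you need to supply.
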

\begin{proof}
	We want to calculate the set 
	\[ \Gamma{} = \left\lbrace  \gamma>0 : \gamma + \lambda_{\min}\!\left((L_s^{1/2})^\dagger\bigl(\gamma^2 X+L^\top QL\bigr)(L_s^{1/2})^\dagger\right)>0\right\rbrace \]
	Note that $L_s =
	\begin{bmatrix}
		1 & -1\\
		-1 & 1
	\end{bmatrix}$, $L_s^{1/2}
	=
	\frac{1}{\sqrt{2}}
	L_s$ and $(L_s^{1/2})^\dagger
	=
	\frac{1}{2\sqrt{2}}
	L_s.$ Moreover, $\bX$ and $\bQ$ given in \cref{tab:notation} can be expressed as $\bX = X\otimes \Id$ and $\bQ = Q\otimes \Id$ where  $X=aL_s$ and $Q = b \begin{bmatrix}
		1 & 1\\
		1 & 1
	\end{bmatrix}$, respectively. Then, it can be shown that $L^\top QL = bL_s$ and 
	so $
	\gamma^2 X+L^\top QL =(a\gamma^2+b)
	L_s$.
	Thus,
	\[
	(L_s^{1/2})^\dagger\bigl(\gamma^2 X+L^\top QL\bigr)(L_s^{1/2})^\dagger
	=
	\frac{a\gamma^2+b}{2}
	L_s.
	\]
	The eigenvalues of this matrix are $0$ and $
	a\gamma^2+b$, and so  
	\[
	\lambda_{\min}\!\left((L_s^{1/2})^\dagger\bigl(\gamma^2 X+L^\top QL\bigr)(L_s^{1/2})^\dagger\right)
	=
	\min\{0,a\gamma^2+b\}.
	\]
	Finally, it can be directly verified that $\Gamma{} \subseteq (\sigma,\varsigma)$. This proves (i).
	
	To prove (ii), the above calculations give
	\[
	\hat{\Xi}_\gamma
	=
	L_s+\frac1\gamma L^\top QL+\gamma X
	=
	\left(1+\frac{b}{\gamma}+a\gamma\right)L_s.
	\]
	Hence, whenever $1+\frac{b}{\gamma}+a\gamma>0$,
	$
	(\hat{\Xi}_\gamma^{1/2})^\dagger
	=
	\left(1+\frac{b}{\gamma}+a\gamma\right)^{-1/2}(L_s^{1/2})^\dagger.
	$
	Since $
	\norm{(L_s^{1/2})^\dagger M}^2=1$,
	it follows that
	 $
	\norm{(\hat{\Xi}_\gamma^{1/2})^\dagger M}^2
	=
	\left(1+\frac{b}{\gamma}+a\gamma\right)^{-1}.
	$
	Hence, the claim of (ii) follows. 
\smartqedmark \end{proof}

\setlength{\bibsep}{1pt}


\begin{thebibliography}{99}
\providecommand{\natexlab}[1]{#1}
\providecommand{\url}[1]{\texttt{#1}}
\expandafter\ifx\csname urlstyle\endcsname\relax
  \providecommand{\doi}[1]{doi: #1}\else
  \providecommand{\doi}{doi: \begingroup \urlstyle{rm}\Url}\fi

\bibitem[{\AA}kerman et~al.(2025){\AA}kerman, Chenchene, Giselsson, and
  Naldi]{ACGN25}
A.~{\AA}kerman, E.~Chenchene, P.~Giselsson, and E.~Naldi.
\newblock Splitting the forward-backward algorithm: A full characterization,
  2025.
\newblock \href{https://arxiv.org/abs/2504.10999}{arXiv:2504.10999}.

\bibitem[Alcantara and Takeda(2025)]{AT25}
J.~H. Alcantara and A.~Takeda.
\newblock Douglas--{R}achford algorithm for nonmonotone multioperator inclusion
  problems, 2025.
\newblock \href{https://arxiv.org/abs/2501.02752}{arXiv:2501.02752}.

\bibitem[Alcantara et~al.(2025)Alcantara, Dao, and Takeda]{ADT25}
J.~H. Alcantara, M.~N. Dao, and A.~Takeda.
\newblock Douglas--{R}achford for multioperator comonotone inclusions with
  applications to multiblock optimization, 2025.
\newblock \href{https://arxiv.org/abs/2506.22928}{arXiv:2506.22928}.

\bibitem[Arag{\'o}n-Artacho et~al.(2023)Arag{\'o}n-Artacho, Bo{\c{t}}, and
  Torregrosa-Bel{\'e}n]{aragonartacho_bot_torregrosabelen_2023}
F.~J. Arag{\'o}n-Artacho, R.~I. Bo{\c{t}}, and D.~Torregrosa-Bel{\'e}n.
\newblock A primal-dual splitting algorithm for composite monotone inclusions
  with minimal lifting.
\newblock \emph{Numer. Algorithms}, 93:\penalty0 103--130, 2023.

\bibitem[Arag\'{o}n-Artacho et~al.(2023)Arag\'{o}n-Artacho, Malitsky, Tam, and
  Torregrosa-Bel\'{e}n]{AMTT23}
F.~J. Arag\'{o}n-Artacho, Y.~Malitsky, M.~K. Tam, and D.~Torregrosa-Bel\'{e}n.
\newblock Distributed forward-backward methods for ring networks.
\newblock \emph{Comput. Optim. Appl.}, 86:\penalty0 845--870, 2023.

\bibitem[Arag\'{o}n-Artacho et~al.(2025)Arag\'{o}n-Artacho, Campoy, and
  L\'{o}pez-Pastor]{ACL24}
F.~J. Arag\'{o}n-Artacho, R.~Campoy, and C.~L\'{o}pez-Pastor.
\newblock Forward-backward algorithms devised by graphs.
\newblock \emph{SIAM J. Optim.}, 35\penalty0 (4):\penalty0 2423--2451, 2025.

\bibitem[Bartz et~al.(2020)Bartz, Campoy, and Phan]{BCP20}
S.~Bartz, R.~Campoy, and H.~M. Phan.
\newblock Demiclosedness principles for generalized nonexpansive mappings.
\newblock \emph{J. Optim. Theory Appl.}, 186:\penalty0 759--778, 2020.

\bibitem[Bartz et~al.(2022)Bartz, Dao, and Phan]{BDP22}
S.~Bartz, M.~N. Dao, and H.~M. Phan.
\newblock Conical averagedness and convergence analysis of fixed point
  algorithms.
\newblock \emph{J. Glob. Optim.}, 82\penalty0 (2):\penalty0 351--373, 2022.

\bibitem[Bauschke and Combettes(2017)]{BC17}
H.~H. Bauschke and P.~L. Combettes.
\newblock \emph{Convex Analysis and Monotone Operator Theory in Hilbert
  Spaces}.
\newblock Springer, Cham, 2nd edition, 2017.

\bibitem[Bauschke et~al.(2021)Bauschke, Moursi, and Wang]{Bauschke2021}
H.~H. Bauschke, W.~M. Moursi, and X.~Wang.
\newblock Generalized monotone operators and their averaged resolvents.
\newblock \emph{Math. Program.}, 189\penalty0 (1):\penalty0 55--74, 2021.

\bibitem[Bredies et~al.(2024)Bredies, Chenchene, and Naldi]{BCN24}
K.~Bredies, E.~Chenchene, and E.~Naldi.
\newblock Graph and distributed extensions of the {D}ouglas--{R}achford method.
\newblock \emph{SIAM J. Optim.}, 34\penalty0 (2):\penalty0 1569--1594, 2024.

\bibitem[Campoy(2022)]{Cam22}
R.~Campoy.
\newblock A product space reformulation with reduced dimension for splitting
  algorithms.
\newblock \emph{Comput. Optim. Appl.}, 83\penalty0 (1):\penalty0 319--348,
  2022.

\bibitem[Chambolle and Pock(2011)]{Chambolle2011FirstOrderPrimalDual}
A.~Chambolle and T.~Pock.
\newblock A first-order primal-dual algorithm for convex problems with
  applications to imaging.
\newblock \emph{J. Math. Imaging Vision}, 40\penalty0 (1):\penalty0 120--145,
  2011.

\bibitem[Chen et~al.(2013)Chen, Huang, and Zhang]{ChenHuangZhang2013}
P.~Chen, J.~Huang, and X.~Zhang.
\newblock A primal--dual fixed point algorithm for convex separable
  minimization with applications to image restoration.
\newblock \emph{Inverse Problems}, 29\penalty0 (2):\penalty0 025011, 2013.

\bibitem[Combettes and Pesquet(2012)]{CombettesPesquet2012}
P.~L. Combettes and J.-C. Pesquet.
\newblock Primal--dual splitting algorithm for solving inclusions with mixtures
  of composite, {L}ipschitzian, and parallel-sum type monotone operators.
\newblock \emph{Set-Valued Var. Anal.}, 20\penalty0 (2):\penalty0 307--330,
  2012.

\bibitem[Condat(2013)]{Condat2013}
L.~Condat.
\newblock A primal--dual splitting method for convex optimization involving
  {L}ipschitzian, proximable and linear composite terms.
\newblock \emph{J. Optim. Theory Appl.}, 158:\penalty0 460--479, 2013.

\bibitem[Contino et~al.(2019)Contino, Maestripieri, and
  Marcantognini]{CONTINO2019214}
M.~Contino, A.~Maestripieri, and S.~Marcantognini.
\newblock Schur complements of selfadjoint {K}rein space operators.
\newblock \emph{Linear Algebra Appl.}, 581:\penalty0 214--246, 2019.
\newblock ISSN 0024-3795.

\bibitem[Conway(1990)]{Conway}
J.~B. Conway.
\newblock \emph{A Course in Functional Analysis}.
\newblock Springer-Verlag, 2nd edition, 1990.

\bibitem[Dao and Phan(2019)]{DP18}
M.~N. Dao and H.~M. Phan.
\newblock Adaptive {D}ouglas--{R}achford splitting algorithm for the sum of two
  operators.
\newblock \emph{SIAM J. Optim.}, 29\penalty0 (4):\penalty0 2697--2724, 2019.

\bibitem[Dao and Phan(2021)]{DP21}
M.~N. Dao and H.~M. Phan.
\newblock An adaptive splitting algorithm for the sum of two generalized
  monotone operators and one cocoercive operator.
\newblock \emph{Fixed Point Theory Algorithms Sci. Eng.}, 2021:\penalty0 Paper
  No. 16, 19 pp, 2021.

\bibitem[Dao et~al.(2025)Dao, Phan, Tam, and Truong]{DPTT25}
M.~N. Dao, H.~M. Phan, M.~K. Tam, and T.~D. Truong.
\newblock Primal-dual splitting for structured composite monotone inclusions
  with or without cocoercivity, 2025.
\newblock \href{https://arxiv.org/abs/2512.10366}{arXiv:2512.10366}.

\bibitem[Dao et~al.(2026)Dao, Tam, and Truong]{DTT26}
M.~N. Dao, M.~K. Tam, and T.~D. Truong.
\newblock A general approach to distributed operator splitting.
\newblock \emph{J. Math. Anal. Appl.}, 562\penalty0 (2):\penalty0 130692, 2026.

\bibitem[Davis and Yin(2017)]{DY17}
D.~Davis and W.~Yin.
\newblock A three-operator splitting scheme and its optimization applications.
\newblock \emph{Set-Valued Var. Anal.}, 25\penalty0 (4):\penalty0 829--858,
  2017.

\bibitem[Deng et~al.(2017)Deng, Lai, Peng, and Yin]{DengLaiPengYin2017}
Wei Deng, Ming-Jun Lai, Zhimin Peng, and Wotao Yin.
\newblock Parallel multi-block {ADMM} with {$o(1/k)$} convergence.
\newblock \emph{J. Sci. Comput.}, 71\penalty0 (2):\penalty0 712--736, 2017.

\bibitem[Douglas and Rachford(1956)]{douglas_rachford_1956}
J.~Douglas and H.~H. Rachford.
\newblock On the numerical solution of heat conduction problems in two and
  three space variables.
\newblock \emph{Trans. Amer. Math. Soc.}, 82:\penalty0 421--439, 1956.

\bibitem[Drori et~al.(2015)Drori, Sabach, and
  Teboulle]{DroriSabachTeboulle2015}
Y.~Drori, S.~Sabach, and M.~Teboulle.
\newblock A simple algorithm for a class of nonsmooth convex-concave
  saddle-point problems.
\newblock \emph{Oper. Res. Lett.}, 43\penalty0 (2):\penalty0 209--214, 2015.

\bibitem[Evens et~al.(2025{\natexlab{a}})Evens, Latafat, and Patrinos]{ELP25}
B.~Evens, P.~Latafat, and P.~Patrinos.
\newblock Convergence of the {C}hambolle--{P}ock algorithm in the absence of
  monotonicity.
\newblock \emph{J. Optim. Theory Appl.}, 206:\penalty0 7, 2025{\natexlab{a}}.

\bibitem[Evens et~al.(2025{\natexlab{b}})Evens, Pas, Latafat, and
  Patrinos]{EPLP25}
B.~Evens, P.~Pas, P.~Latafat, and P.~Patrinos.
\newblock Convergence of the preconditioned proximal point method and
  {D}ouglas--{R}achford splitting in the absence of monotonicity.
\newblock \emph{Math. Program.}, 214:\penalty0 247--301, 2025{\natexlab{b}}.

\bibitem[Kruger(1985)]{Kruger1985}
A.~Y. Kruger.
\newblock Generalized differentials of nonsmooth functions, and necessary
  conditions for an extremum.
\newblock \emph{Sib. Math. J.}, 26\penalty0 (3):\penalty0 370--379, 1985.

\bibitem[Latafat and Patrinos(2017)]{LatafatPatrinos2017}
P.~Latafat and P.~Patrinos.
\newblock Asymmetric forward-backward-adjoint splitting for solving monotone
  inclusions involving three operators.
\newblock \emph{Comput. Optim. Appl.}, 68\penalty0 (1):\penalty0 57--93, 2017.

\bibitem[Lions and Mercier(1979)]{LM79}
P.~L. Lions and B.~Mercier.
\newblock Splitting algorithms for the sum of two nonlinear operators.
\newblock \emph{SIAM J. Numer. Anal.}, 16\penalty0 (6):\penalty0 964--979,
  1979.

\bibitem[Malitsky and Tam(2023)]{MT23}
Y.~Malitsky and M.~K. Tam.
\newblock Resolvent splitting for sums of monotone operators with minimal
  lifting.
\newblock \emph{Math. Program.}, 201\penalty0 (1--2):\penalty0 231--262, 2023.

\bibitem[Moslehian et~al.(2019)Moslehian, Kian, and
  Xu]{MoslehianKianXu2019Positivity}
M.~S. Moslehian, M.~Kian, and Q.~Xu.
\newblock Positivity of {$2\times 2$} block matrices of operators.
\newblock \emph{Banach J. Math. Anal.}, 13\penalty0 (3):\penalty0 726--743,
  2019.

\bibitem[Raguet et~al.(2013)Raguet, Fadili, and Peyr\'{e}]{RFP13}
H.~Raguet, J.~Fadili, and G.~Peyr\'{e}.
\newblock A generalized forward-backward splitting.
\newblock \emph{SIAM J. Imaging Sci.}, 6\penalty0 (3):\penalty0 1199--1226,
  2013.

\bibitem[Rockafellar(1970)]{Rockafellar1970}
R.~T. Rockafellar.
\newblock On the maximality of sums of nonlinear monotone operators.
\newblock \emph{Trans. Amer. Math. Soc.}, 149:\penalty0 75--88, 1970.

\bibitem[Ryu(2020)]{ryu_2020}
E.~K. Ryu.
\newblock Uniqueness of {DRS} as the 2-operator resolvent-splitting and
  impossibility of 3-operator resolvent-splitting.
\newblock \emph{Math. Program.}, 182:\penalty0 233--273, 2020.

\bibitem[Tam(2024)]{Tam24}
M.~K. Tam.
\newblock Frugal and decentralised resolvent splittings defined by nonexpansive
  operators.
\newblock \emph{Optim. Lett.}, 18:\penalty0 1541--1559, 2024.

\bibitem[Tarcsay(2011)]{Tarcsay2011}
Z.~Tarcsay.
\newblock On operators with closed ranges.
\newblock \emph{Acta Sci. Math. (Szeged)}, 77\penalty0 (3--4):\penalty0
  579--587, 2011.

\end{thebibliography}
\end{document}